\newcommand\reallywidehat[1]{%
\savestack{\tmpbox}{\stretchto{%
  \scaleto{%
    \scalerel*[\widthof{\ensuremath{#1}}]{\kern-.6pt\bigwedge\kern-.6pt}%
    {\rule[-\textheight/2]{1ex}{\textheight}}
  }{\textheight}%
}{0.5ex}}%
\stackon[1pt]{#1}{\tmpbox}%
}
\newcommand\reallywidecheck[1]{%
\savestack{\tmpbox}{\stretchto{%
  \scaleto{%
    \scalerel*[\widthof{\ensuremath{#1}}]{\kern-.6pt\bigwedge\kern-.6pt}%
    {\rule[-\textheight/2]{1ex}{\textheight}}
  }{\textheight}%
}{0.5ex}}%
\stackon[1pt]{#1}{\scalebox{-1}{\tmpbox}}%
}
\numberwithin{equation}{section}
\newcommand{\Z}{{\mathbb Z}}
\newcommand{\R}{{\mathbb R}}
\newcommand{\N}{{\mathbb N}}
\newcommand{\C}{{\mathbb C}}
\newcommand{\mc}{\mathcal}
\newcommand{\dd}{\mbox{d}}
\newcommand{\eps}{\varepsilon}
\newcommand{\cM}{{\mathcal M}}
\newcommand{\SAP}{\mathcal{S}\hspace*{-2pt}\mathcal{AP}}
\newcommand{\se}{{S^1}}
\newcommand{\s}{{S^p}}
\newcommand{\we}{{W_{\mathcal{A}}^1}}
\newcommand{\w}{{W_{\mathcal{A}}^p}}
\newcommand{\Cu}{C_{\mathsf{u}}}
\newcommand{\Cc}{C_{\mathsf{c}}}
\theoremstyle{plain}
\newtheorem{theorem}{Theorem}[section]
\newtheorem{proposition}[theorem]{Proposition}
\newtheorem{lemma}[theorem]{Lemma}
\newtheorem{corollary}[theorem]{Corollary}
\theoremstyle{definition}
\newtheorem{definition}[theorem]{Definition}
\newtheorem{remark}[theorem]{Remark}
\begin{document}
\title[Stepanov and Weyl almost periodicity in locally compact Abelian groups]{Stepanov and Weyl almost periodicity\\ in locally compact Abelian groups}

\author{Timo Spindeler}
\address{Department of Mathematical and Statistical Sciences, \newline
\hspace*{\parindent}632 CAB, University of Alberta, Edmonton, AB, T6G 2G1, Canada}
\email{spindele@ualberta.ca}

\begin{abstract}
We study Stepanov and Weyl almost periodic functions on locally compact Abelian groups, which naturally appear in the theory of aperiodic order. 
\end{abstract}

\keywords{Stepanov almost periodic functions, Weyl almost periodic functions, locally compact Abelian groups}

\subjclass[2010]{43A05, 43A25}

\maketitle

\section{Introduction}

Almost periodic functions are natural generalisations of periodic functions, and they appear in many different areas of mathematics. They are well studied objects on the real line because they are used in boundary value problems for differential equations \cite{lz}. A systematic treatment of these functions can be found in \cite{Bes}. Also, there is a well established theory for Bohr almost periodic functions and weakly almost periodic functions on locally compact Abelian groups \cite{Bohr,Eb}. Bohr almost periodic functions turned out to be an important tool in harmonic analysis. Firstly, they are used to study finite-dimensional representations of locally compact Abelian groups \cite[Ch. 16.2]{dix}. Secondly, they describe those autocorrelations which give rise to a pure point diffraction measure; more on that below. However, recent progress in the study of aperiodic order and dynamical systems \cite{LSS,LSS2,SS1} makes it clear that we also need to aim at a better understanding of Stepanov and Weyl almost periodic functions on locally compact Abelian groups. 

Let us be more precise. Meyer sets and cut and project sets are important tools in the study of aperiodic order, since they are used to model quasicrystals. If $\mu$ denotes a translation bounded measure (e.g. the Dirac comb of a Meyer set), then the diffraction measure $\widehat{\gamma_{\mu}}$, which is the Fourier transform of the autocorrelation $\gamma_{\mu}:=\lim_{n\to\infty} |A_n|^{-1} (\mu|_{A_n}*\widetilde{\mu|_{A_n}})$, indicates how (dis)ordered the system that $\mu$ models is \cite{TAO}; note that $(A_n)_{n\in\N}$ is a suitable averaging sequence. Since $\widehat{\gamma_{\mu}}$ is a positive measure, it can be decomposed into three components. The pure point component $(\widehat{\gamma_{\mu}})_{\text{pp}}$ describes the ordered part, whereas the absolutely continuous and singular continuous component $(\widehat{\gamma_{\mu}})_{\text{ac}}$ and $(\widehat{\gamma_{\mu}})_{\text{sc}}$ describe different kinds of disorder. Due to a recent result \cite{LSS}, we are able to characterise measures $\mu$ that give rise to a pure point diffraction measure, i.e. $\widehat{\gamma_{\mu}}=(\widehat{\gamma_{\mu}})_{\text{pp}}$. For obvious reasons, in the next step, we want to find out more about the continuous components. By \cite{Str}, we know that if $\gamma_{\mu}$ is supported inside a Meyer set, its diffraction and each of its components are norm almost periodic measures. This explains the importance of Stepanov almost periodic functions, because an absolutely continuous measure is norm almost periodic if and only if its Radon--Nikodym density is a Stepanov almot periodic function \cite{SS1}. For more background on the connection between norm almost periodicity, cut and project schemes and pure point diffraction, we refer the reader to \cite{BM}. 

Let us briefly mention another application of Stepanov almost periodic functions, namely stable sampling. It is concerned with the problem of reconstrucing a function $f$ from a subset $\Lambda$. Now, a uniformly discrete set $\Lambda$ is a set of stable sampling with respect to $L^p(K)$ and the Stepanov space $S^p(\R^d)$ if every function $f\in L^p(K)$ is the restriction to $K$ of a function $F\in S^p(\R^d)$ whose frequencies belong to $\Lambda$. This was studied in \cite{YM} for the Euclidean space $\R^d$; see \cite{RS} for more general locally compact Abelian groups.

\smallskip

Weyl almost periodic functions also play an important role in aperiodic order. Next to the diffraction measure, certain dynamical systems provide another powerful tool to study aperiodicity. Consider a translation bounded measure $\mu$. It was shown in \cite{LSS} that $\mu$ is Weyl almost periodic if and only if the corresponding hull $\mathbb{X}(\mu)$ has a discrete (or pure point) dynamical spectrum, continuous eigenfunctions and is uniquely ergodic. This result was generalised in \cite{LSS2} from dynamical systems of translation bounded measures $\mathbb{X}(\mu)$, which are of interest in aperiodic order, to arbitrary metric dynamical systems. This is connected to the diffraction measure, because $\mathbb{X}$ has a discrete dynamical spectrum if and only if $\widehat{\gamma_{\mu}}$ is a pure point measure.

By definition, a measure is Stepanov or Weyl almost periodic if the function $f*\mu$ is Stepanov or Weyl almost periodic, for all $f\in \Cc(G)$. Therefore, the results mentioned above motivate to study these kind of almost periodic functions on locally compact Abelian groups.
\bigskip

A function $f:\R\to\R$ is called periodic if there is a number $t\in\R$ such that 
\[
f(x-t)=f(x) \qquad \text{ for all } x\in \R \,, 
\]
which is equivalent to
\[
\|T_tf-f\|_{\infty} = 0,
\]
where $T_tf(x):=f(x-t)$. Next, instead of the supremum norm, one can consider the norm 
\[
\|f\|_{S^1} := \sup_{y\in\R} \frac{1}{\ell} \int_{y}^{y+\ell} |f(x)|\ \dd x \,,
\] 
and define $f\in L_{\text{loc}}^1(\R)$ to be Stepanov almost periodic if the set 
\[
\{t\in\R\ |\ \|f-T_tf\|_{S^1}<\eps \}
\]
is relatively dense in $\R$.

The space of Stepanov almost periodic functions is well studied in the case $G=\R$, see \cite{Bes}. It forms a Banach space with respect to the norm $\|\cdot\|_{S^1}$. One of the main theorems in this area states three equivalent properties which can be used to characterise Stepanov almost periodic functions.  
\begin{theorem}  \label{thm:main_R}
Let $f$ be a locally integrable function on $\R$. Then, the following statements are equivalent:
\begin{enumerate}
\item[(1)] For each $\eps>0$, the set $\{t\in\R\ |\ \|f-T_tf\|_{S^1}<\eps \}$ is relatively dense in $\R$.
\item[(2)] $f$ is the limit of a sequence of trigonometric polynomials with respect to the Stepanov norm $\|\cdot\|_{S^1}$.
\item[(3)] The orbit $\{ T_tf\ |\ t\in\R\}$ is precompact in the $\|\cdot\|_{S^1}$-topology.
\end{enumerate}
\end{theorem}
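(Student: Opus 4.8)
The plan is to organise the three implications around the classical cycle
$(2)\Rightarrow(3)\Rightarrow(1)\Rightarrow(2)$, after first reducing matters to the theory of
\emph{Banach-space-valued} Bohr almost periodic functions. To $f$ I associate the map
$F\colon\R\to X$ into the Banach space $X=L^1([0,\ell])$ with normalised norm
$\|g\|_X=\ell^{-1}\int_0^\ell|g(s)|\,\dd s$, defined by $F(y)(s)=f(y+s)$. A direct computation
gives $\|F(y)\|_X=\ell^{-1}\int_y^{y+\ell}|f|$, so that $\|f\|_{S^1}=\sup_{y}\|F(y)\|_X=\|F\|_\infty$;
moreover the function associated to $T_tf$ is $y\mapsto F(y-t)$, whence
$\|f-T_tf\|_{S^1}=\|F-T_tF\|_\infty$. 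Under this dictionary, condition (1) is precisely the relative
denseness of the $\eps$-almost periods of $F$ and condition (3) is the precompactness of the orbit of
$F$ in the sup-norm, i.e. the Bohr and Bochner conditions for the $X$-valued function $F$. I will work
throughout in the Banach space $S^1$ of locally integrable functions with $\|f\|_{S^1}<\infty$; each of
the three conditions forces $f$ to lie there.

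For $(2)\Rightarrow(3)$ I would note that a single character $\e^{\im\lambda x}$ has orbit lying on a
circle, and that for a trigonometric polynomial $p=\sum_{k=1}^n c_k\e^{\im\lambda_k x}$ the orbit map
$t\mapsto T_tp$ factors continuously through the compact torus
$t\mapsto(\e^{\im\lambda_1 t},\dots,\e^{\im\lambda_n t})\in\mathbb{T}^n$, so that $\{T_tp\}$ is
precompact in $\|\cdot\|_{S^1}$. Precompactness then passes to $S^1$-limits by a routine
total-boundedness estimate, giving (3) for any $f$ satisfying (2). For $(3)\Rightarrow(1)$ I would run
the standard Bochner argument: cover the precompact orbit by finitely many $\eps$-balls centred at
$T_{t_1}f,\dots,T_{t_n}f$; for every $s$ there is a $j$ with $\|T_sf-T_{t_j}f\|_{S^1}<\eps$, and
translation invariance of the norm rewrites this as $\|f-T_{s-t_j}f\|_{S^1}<\eps$. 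Hence every real
number differs by some $t_j$ from an $\eps$-almost period, so the set of $\eps$-almost periods is
relatively dense with inclusion length $2\max_j|t_j|$.

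The substantial direction is $(1)\Rightarrow(2)$, the approximation theorem. Here I would first
establish, for any $g$ satisfying (1), the existence of the mean value
$M(g)=\lim_{T\to\infty}(2T)^{-1}\int_{-T}^{T}g$, then define the Fourier--Bohr coefficients
$a(\lambda)=M\bigl(f(\cdot)\,\e^{-\im\lambda\cdot}\bigr)$ and approximate $f$ by its Bochner--Fej\'er
polynomials, which have the genuinely \emph{scalar} trigonometric form
$\sum_k d_k\,a(\lambda_k)\,\e^{\im\lambda_k x}$. The point that keeps the vector-valued reduction
honest is the identity
\[
M\bigl(F(\cdot)\,\e^{-\im\lambda\cdot}\bigr)=a(\lambda)\,\e^{\im\lambda\,\cdot}\in X ,
\]
obtained by the substitution $u=y+s$ inside the mean; it shows that the $X$-valued Fourier coefficients
of $F$ are exactly the images of the scalar coefficients of $f$, so the Bochner--Fej\'er sums for $F$
descend to scalar trigonometric polynomials for $f$ and converge to $f$ in $\|\cdot\|_{S^1}$.

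I expect the main obstacle to be precisely this last step. The difficulties are, first, to prove that
the mean value exists in the required (Bochner) sense for $X$-valued almost periodic functions and is
translation invariant, and second, to verify that the Fej\'er-kernel smoothing underlying the
Bochner--Fej\'er construction converges in the sup-norm of $X$ rather than merely pointwise. Everything
else is either a direct computation or a compactness argument; the analytic heart lies in carrying the
Fourier-analytic approximation machinery through the normalised $L^1([0,\ell])$-valued setting while
ensuring, via the coefficient identity above, that the output consists of ordinary scalar
trigonometric polynomials.
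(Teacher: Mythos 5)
Your proposal is correct, but for the substantial implication $(1)\Rightarrow(2)$ it takes a genuinely different route from the paper's. The paper proves the theorem (in the generality of locally compact Abelian groups, via Propositions~\ref{prop:main2} and~\ref{prop:main1}) without ever introducing the Bochner transform $F(y)=f(y+\cdot)$: it forms the mollified averages $f_{\eta}(x)=\theta_G(B_{\eta}(x))^{-1}\int_{B_{\eta}(x)}f\,\dd\theta_G$, checks that each $f_{\eta}$ is a continuous \emph{scalar} Bohr almost periodic function whose uniform $\eps$-almost periods include the Stepanov $\eps$-almost periods of $f$, shows $\|f-f_{\eta}\|_{S^1}\to0$ as $\eta\to0$ via Minkowski's integral inequality and $S^1$-uniform continuity, and then quotes the classical approximation theorem for scalar Bohr functions. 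Your reduction to $L^1([0,\ell])$-valued Bohr functions is the classical Besicovitch--Bochner dictionary and is sound; the coefficient identity $M\bigl(F(\cdot)\e^{-\im\lambda\cdot}\bigr)=a(\lambda)\e^{\im\lambda\cdot}$ is indeed the point that makes the Bochner--Fej\'er sums descend to scalar polynomials. What the paper's route buys is economy (only the scalar approximation theorem is needed as a black box) and painless transfer to general groups; what your route buys is the explicit identification of Stepanov theory with vector-valued Bohr theory and the Fourier--Bohr series of $f$ as a by-product, at the cost of developing the vector-valued mean and Fej\'er machinery. One step you should make explicit: hypothesis (1) must first be upgraded to $S^1$-uniform continuity of $f$ (equivalently, continuity of $F$), since the vector-valued Bohr--Bochner theory presupposes a continuous orbit map; this is the paper's Lemma~\ref{lem:2}(b), deduced from the relative denseness of the almost periods together with continuity of translation on $L^1$ of compact sets. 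Your $(2)\Rightarrow(3)$ and $(3)\Rightarrow(1)$ legs are fine and coincide in substance with the paper's covering arguments, including the inclusion length $2\max_j|t_j|$.
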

\noindent Every locally integrable function satisfying one (and thus all) of the above three properties is called a Stepanov almost periodic function. We will prove in this paper that Theorem~\ref{thm:main_R} remains true for second countable locally compact Abelian groups. 

Similarly, one can define that a locally integrable function $f$ is Weyl almost periodic if it is the limit of a sequence of trigonometric polynomials with respect to the seminorm
\[
\|f\|_{W^1} := \lim_{\ell\to\infty} \sup_{y\in \R} \frac{1}{\ell} \int_y^{y+\ell} |f(x)| \ \dd x \,. 
\]
We will see below that Weyl almost periodic functions on locally compact Abelian groups can also be characterised via a relatively dense set of $\eps$-almost periods.

From now on, $G$ denotes a locally comact Abelian group. The corresponding Haar measure is denoted by $\theta_G$.

\section{Stepanov almost periodic functions}

First, we will study Stepanov almost periodic functions on second countable locally compact Abelian groups and establish some of their basic properties. We start by introducing a norm on the set of locally integrable functions $f:G\to\C$.

\begin{definition}
Let $K$ be a precompact set with non-empty interior, and let $1\leqslant p<\infty$. We denote by $BS_{K}(G)$ the space of all $f\in L_{\text{loc}}^p(G)$ such that 
\[
\|f\|_{S_{K}^p(G)} := \sup_{y\in G} \left(\frac{1}{\theta_G(K)} 
\int_{y+K} |f(x)|^p\ \dd \theta_G(x) \right)^{\frac{1}{p}}< \infty\,.
\]
\end{definition}
If $G$ is clear from the context, we write $\|f\|_{S_{K}^p}$ instead of $\|f\|_{S_{K}^p(G)}$, or simply $\|f\|_{S^p}$. The mapping $f\mapsto\|f\|_{S^p}$ defines a norm on $BS_K(G)$. 

Next, let $K_1,\,K_2\subseteq G$ be two compact sets with non-empty interior. We will show that the norms $\|\cdot\|_{S_{K_1}^p}$ and $\|\cdot\|_{S_{K_2}^p}$ are equivalent.

\begin{lemma}  \label{lem:equiv_norms}
For every two compact sets $K_1,\, K_2\subseteq G$ with non-empty interior, there are constants $c_1,c_2>0$ such that 
\[
c_1\, \|f\|_{S_{K_2}^p} \leqslant  \|f\|_{S_{K_1}^p} \leqslant c_2\,  \|f\|_{S_{K_2}^p}
\]
for all $f\in BS_{K_j}^p(G)$, $j\in\{1,2\}$.
\end{lemma}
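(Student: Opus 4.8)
The plan is to establish the two-sided inequality by a covering argument: since $K_1$ and $K_2$ are both compact with nonempty interior, I can cover one by finitely many translates of the other, and then compare the local $L^p$-averages. By symmetry it suffices to prove one of the two inequalities, say $\|f\|_{S^p_{K_1}} \leqslant c_2\,\|f\|_{S^p_{K_2}}$; the reverse follows by interchanging the roles of $K_1$ and $K_2$.

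First I would fix an open set $U$ with $\emptyset \neq U \subseteq K_2$ (using that $K_2$ has nonempty interior). The translates $\{y+U : y\in G\}$ form an open cover of the compact set $K_1$, so there exist finitely many points $t_1,\dots,t_n\in G$ with $K_1 \subseteq \bigcup_{j=1}^n (t_j + U) \subseteq \bigcup_{j=1}^n (t_j + K_2)$. Then for any $y\in G$,
\[
\int_{y+K_1} |f(x)|^p\ \dd\theta_G(x) \;\leqslant\; \sum_{j=1}^n \int_{y+t_j+K_2} |f(x)|^p\ \dd\theta_G(x)\,.
\]
Each summand is bounded by $\theta_G(K_2)\,\|f\|_{S^p_{K_2}}^p$, by taking the supremum over the translation point; crucially this bound is uniform in $y$ because the defining supremum in $\|\cdot\|_{S^p_{K_2}}$ ranges over all of $G$, so the shift by $y+t_j$ is harmless. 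Hence the right-hand side is at most $n\,\theta_G(K_2)\,\|f\|_{S^p_{K_2}}^p$, and dividing by $\theta_G(K_1)$ and taking the supremum over $y$ followed by the $p$-th root gives
\[
\|f\|_{S^p_{K_1}} \;\leqslant\; \left(\frac{n\,\theta_G(K_2)}{\theta_G(K_1)}\right)^{1/p} \|f\|_{S^p_{K_2}}\,,
\]
which is the desired inequality with an explicit constant $c_2$.

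The main point to get right is the translation-invariance bookkeeping: I must use that the Haar measure $\theta_G$ is translation invariant (so that $\theta_G(K_2)$ is the correct normalizer regardless of the shift) and that the supremum in the Stepanov norm is genuinely over all $y\in G$, which is what makes the constant $n$ depend only on the geometry of $K_1,K_2$ and not on $y$ or $f$. There is no serious analytic obstacle here; the finiteness of the subcover (compactness of $K_1$ together with nonempty interior of $K_2$) is exactly what converts a pointwise comparison into a uniform norm comparison. The reverse inequality, giving $c_1$, is obtained verbatim with $K_1$ and $K_2$ swapped, and setting the two explicit constants completes the proof.
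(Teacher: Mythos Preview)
Your proof is correct and follows essentially the same covering argument as the paper, arriving at the identical constant $c_2 = \left(n\,\theta_G(K_2)/\theta_G(K_1)\right)^{1/p}$. If anything, you are slightly more explicit than the paper in justifying why $K_1$ admits a finite cover by translates of $K_2$ (via the nonempty interior of $K_2$ and compactness of $K_1$), whereas the paper simply asserts this covering.
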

\begin{proof}
\noindent (i) First note that we can cover $K_1$ with finitely many translates of $K_2$, i.e. there are $n\in\N$ and $z_1,\ldots,z_n\in G$ such that $K_1 \subseteq \bigcup_{j=1}^n (z_j+K_2)$. Then, we have
\begin{align*}
\left(\frac{1}{\theta_G(K_1)} \int_{y+K_1} |f(x)|^p\ \dd \theta_G(x)
 \right)^{\frac{1}{p}}
    &\leqslant \left( \frac{\theta_G(K_2)}{\theta_G(K_1)}\, 
       \frac{1}{\theta_G(K_2)} \int_{\bigcup_{j=1}^n(z_j+K_2)}
       |f(x)|^p\ \dd \theta_G(x) \right)^{\frac{1}{p}}  \\
    &\leqslant \left(n\, \frac{\theta_G(K_2)}{\theta_G(K_1)}\right)^{\frac{1}{p}}  \|f\|_{S_{K_2}^p}
\end{align*}
for all $y\in G$, which proves the second inequality with $c_2:=\left(n\, \frac{\theta_G(K_2)}{\theta_G(K_1)}\right)^{\frac{1}{p}}$.

\medskip

\noindent (ii) The first inequality can be shown similarly.
\end{proof}

Also, it follows from the triangle inequality for $L^p$-spaces that $S_{K}^p(G)$ is a vector space. In fact, the pair $\big(BS_{K}^p(G), \| \cdot\|_{S^p}\big)$ is a Banach space.

\begin{proposition}
The pair $\big(BS_{K}^p(G), \| \cdot\|_{S^p}\big)$ is a Banach space.
\end{proposition}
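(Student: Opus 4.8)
The plan is to prove \emph{completeness}, since the pair has already been identified as a normed vector space. So let $(f_n)_{n\in\N}$ be a Cauchy sequence with respect to $\|\cdot\|_{S^p}$, and let me first produce a candidate limit in $L_{\text{loc}}^p(G)$. The key observation is that the $S^p$-norm dominates the local $L^p$-norm on precompact sets: for any precompact $E\subseteq G$ one covers the compact set $\overline{E}$ by finitely many translates $z_1+K,\ldots,z_n+K$ of $K$ (possible since $K$ has non-empty interior), exactly as in the proof of Lemma~\ref{lem:equiv_norms}, which yields a constant $C_E>0$ with $\big(\int_E|g|^p\,\dd\theta_G\big)^{1/p}\leqslant C_E\,\|g\|_{S^p}$ for all $g\in BS_K^p(G)$. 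In particular, for every such $E$ the restrictions $(f_n|_E)$ form a Cauchy sequence in the complete space $L^p(E)$.

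Next I would assemble these local limits into one global function. Since $G$ is second countable, it is $\sigma$-compact, so one may write $G=\bigcup_{m\in\N} E_m$ with precompact open sets $E_m\subseteq E_{m+1}$. On each $E_m$ the sequence $(f_n|_{E_m})$ converges in $L^p(E_m)$ to some $g_m$, and these limits are consistent, i.e. $g_{m+1}|_{E_m}=g_m$ almost everywhere, because $L^p(E_{m+1})$-convergence forces $L^p(E_m)$-convergence and $L^p$-limits are unique. Patching the $g_m$ together defines a function $f\in L_{\text{loc}}^p(G)$ with the property that $f_n\to f$ in $L^p(E)$ for every precompact $E$.

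It then remains to show that $f\in BS_K^p(G)$ and that $f_n\to f$ in $\|\cdot\|_{S^p}$, and both follow from Fatou's lemma applied on single translates $y+K$. Fixing $y\in G$, convergence $f_n\to f$ in $L^p(y+K)$ gives a subsequence converging to $f$ almost everywhere on $y+K$, and Fatou's lemma yields
\[
\frac{1}{\theta_G(K)}\int_{y+K}|f|^p\,\dd\theta_G \leqslant \liminf_{k}\frac{1}{\theta_G(K)}\int_{y+K}|f_{n_k}|^p\,\dd\theta_G \leqslant \sup_{n}\|f_n\|_{S^p}^p.
\]
The right-hand side is finite, because a Cauchy sequence is bounded, and independent of $y$, so taking the supremum over $y\in G$ shows $\|f\|_{S^p}<\infty$, i.e. $f\in BS_K^p(G)$. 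For the convergence, fix $\eps>0$ and choose $N$ with $\|f_m-f_n\|_{S^p}<\eps$ for all $m,n\geqslant N$; for fixed $n\geqslant N$ and fixed $y$ the same Fatou estimate applied to $|f_{n_k}-f_n|^p$ gives $\frac{1}{\theta_G(K)}\int_{y+K}|f-f_n|^p\,\dd\theta_G\leqslant\eps^p$, and the supremum over $y$ yields $\|f-f_n\|_{S^p}\leqslant\eps$ for all $n\geqslant N$.

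The Fatou bounds themselves are routine; the step that requires the most care is the construction of a \emph{single} globally defined limit $f$ together with the verification that every bound is \emph{uniform in $y$}. This is precisely where the structure of the norm matters: Fatou is applied on each fixed translate $y+K$ separately, yet the resulting estimate is controlled by the $y$-independent quantity $\sup_n\|f_n\|_{S^p}^p$, which is what allows the final supremum over $y$ to be taken. Second countability enters only to guarantee $\sigma$-compactness, and hence the exhaustion $(E_m)$ used to patch the local $L^p$-limits into $f$.
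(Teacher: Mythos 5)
Your proof is correct, but it constructs the limit function by a genuinely different route than the paper. You reduce everything to the completeness of the ordinary spaces $L^p(E)$: the covering estimate (as in Lemma~\ref{lem:equiv_norms}) shows that $\|\cdot\|_{S^p}$ dominates each local $L^p$-norm, so the restrictions form Cauchy sequences in $L^p(E)$, and you glue the resulting local limits along a $\sigma$-compact exhaustion of $G$, which is where second countability enters. The paper instead runs the Riesz--Fischer argument directly in $BS_K^p(G)$: it extracts a subsequence with $\|f_{n_{k+1}}-f_{n_k}\|_{S^p}<2^{-k}$, shows that the telescoping series $g=\sum_k|f_{n_{k+1}}-f_{n_k}|$ has $\|g\|_{S^p}\leqslant 1$ and is therefore finite almost everywhere, and obtains the candidate limit as the pointwise a.e.\ sum --- no appeal to completeness of $L^p$ and no exhaustion of $G$ is needed, so that argument works verbatim on an arbitrary locally compact Abelian group. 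Both proofs then close identically, applying Fatou's lemma on each fixed translate $y+K$ against a $y$-independent Cauchy bound before taking the supremum over $y$; your remark that the subsequence furnishing a.e.\ convergence may depend on $y$ is exactly the right point to flag, and it is harmless for the reason you give. In short: your version is more modular and leans on standard black boxes at the cost of an extra hypothesis ($\sigma$-compactness) that the statement does not really require; the paper's version is self-contained and slightly more general.
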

\begin{proof}
Let $(f_n)_{n\in\N}$ be a Cauchy sequence in $BS_{K}^p(G)$. In that case, we can pick a subsequence $(f_{n_k})_{k\in\N}$ such that 
\[
\|f_{n_{k+1}}-f_{n_k}\|_{S^p} < \frac{1}{2^k}  \quad \ \text{ for all } k\in \N\,.
\]
Let $g(x):=\sum_{k=1}^{\infty} |f_{n_{k+1}}(x)-f_{n_k}(x)|$, for all $x\in G$. Then,
\begin{align*}
\| g\|_{S^p} 
    &\leqslant \sup_{y\in G}\, \liminf_{N\to\infty} \left( \frac{1}{\theta_G(K)}
       \int_{y+K} \Big(\sum_{k=1}^N|f_{n_{k+1}}(x)-f_{n_k}(x)|\Big)^p\ 
       \dd \theta_G(x) \right)^{\frac{1}{p}}   \\
    &= \sup_{y\in G}\, \liminf_{N\to\infty} \frac{1}{\theta_G(K)^{\frac{1}{p}}}
        \Big\| \sum_{k=1}^N |f_{n_{k+1}} -f_{n_k}|\Big\|_{L^p(y+K)}   \\
    &\leqslant \sup_{y\in G}\, \liminf_{N\to\infty} \sum_{k=1}^N
       \frac{1}{\theta_G(K)^{\frac{1}{p}}} \Big\|
        f_{n_{k+1}} -f_{n_k}\Big\|_{L^p(y+K)}   \\
    &\leqslant \liminf_{N\to\infty}\sum_{k=1}^N \Big\|
        f_{n_{k+1}} -f_{n_k}\Big\|_{S^p}  \, \leqslant \, 1\,.
\end{align*}
Therefore, $g\in BS_{K}^p(G)$ and hence $|g|<+\infty$ almost everywhere. Because of this and
\[
f_{n_{N+1}}-f_{n_1} = \sum_{k=1}^{N} \big(f_{n_{k+1}}-f_{n_k} \big) \,,
\]  
there is a set $A$ of full measure such that the sequence $(f_{n_N}(x))_{N\in\N}$ converges to the limit $f_{n_1}(x) + \sum_{k=1}^{\infty} \big(f_{n_{k+1}}(x)-f_{n_k}(x) \big)$ for all $x\in A$. This means that it converges almost everywhere to the function
\[
f(x) := 
\begin{cases}
f_{n_1}(x) + \sum_{k=1}^{\infty} \big(f_{n_{k+1}}(x)-f_{n_k}(x) \big), & \text{ if } x\in A, \\
0, & \text{ otherwise}.
\end{cases}
\]

It remains to show that $(f_{n_N})_{N\in\N}$ converges to $f$ with respect to $\| \cdot \|_{S^p}$ because every Cauchy sequence which contains a converging subsequence converges itself. Let $\eps>0$. Since $(f_n)_{n\in\N}$ is a Cauchy sequence, there is a number $n_0\in \N$ such that 
\[
\|f_n-f_m\|_{S^p} < \eps \quad \ \text{ for all } n,m\geqslant n_0\,.
\]
Finally, this implies (using Fatous Lemma)
\begin{align*}
\|f-f_{n_k}\|_{S^p}
    &= \sup_{y\in G}\left( \frac{1}{\theta_G(K)} \int_{y+K} 
        \lim_{m\to\infty} |f_{n_m}(x)-f_{n_k}(x)|^p
        \ \dd \theta_G(x) \right)^{\frac{1}{p}}  \\
    &\leqslant \sup_{y\in G}\,  \liminf_{m\to\infty} \left( \frac{1}{\theta_G(K)}
        \int_{y+K}|f_{n_m}(x) -f_{n_k}(x)|^p \ \dd \theta_G(x) 
        \right)^{\frac{1}{p}}   \\
    &\leqslant   \liminf_{m\to\infty} \|f_{n_m}-f_{n_k}\|_{S^p} \, <\, \eps  
\end{align*}
for all $k\geqslant n_0$.
\end{proof}

\begin{definition}
Let $K$ and $p$ be as above. A function $f\in L_{\text{loc}}^p(G)$ is called \emph{Stepanov $p$-almost periodic} (with respect to $K$) if, for all $\eps>0$, the set of all $t\in G$ which satisfy
\[
\|f-T_tf\|_{S^p}<\eps
\]
is relatively dense in $G$. The space of all Stepanov $p$-almost periodic functions on $G$ is denoted by $\s(G)$. 
\end{definition}

Again, we may sometimes write $\s$ instead of $\s(G)$. Note that, by Lemma~\ref{lem:equiv_norms}, if $f$ is Stepanov $p$-almost periodic with respect to some compact set $K$ with non-empty interior, it is Stepanov $p$-almost periodic with respect to all compact sets with non-empty interior.

 Let us collect some basic properties of Stepanov almost periodic functions. 

\begin{lemma} \label{lem:2}
Let $f\in \s(G)$.
\begin{enumerate}
\item[(a)] The function $f$ is $S^p$-bounded, i.e. $\|f\|_{S^p}<\infty$.
\item[(b)] The function $f$ is $S^p$-uniformly continous, i.e. for any $\eps>0$, there is a neighbourhood $V$ of $0$ such that
\[
s\in V \implies \|f-T_sf\|_{S^p}<\eps\,.
\]
\item[(c)] If $1\leqslant p'\leqslant p<\infty$, we have $\s(G)\subseteq S^{p'}(G)$. More precisely, we have 
\[
\|f\|_{S^{p'}} \leqslant \|f\|_{S^p} \,. 
\]
\item[(d)] One also has
\[
\s(G)\cap L^{\infty}(G) = \se(G)\cap L^{\infty}(G) \,.
\]
\item[(e)] Let $f^{\dagger}$ and $\widetilde{f}$ be defined via $f^{\dagger}(x):=f(-x)$ and $\widetilde{f}(x):=\overline{f(-x)}$. Let $t\in G$. Then, 
\[ 
f\in\s(G) \implies \overline{f},\, f^{\dagger},\, \widetilde{f},\, |f|,\, T_tf \in\s(G) \,.
\]
\end{enumerate}
\end{lemma}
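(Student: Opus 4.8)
The plan is to isolate a few elementary identities for $\|\cdot\|_{S^p}$ and then dispatch (a), (c), (d), (e) as soft consequences, reserving the genuine work for (b). The starting point is that the Haar measure is invariant under translations and under inversion, so that after the substitution $y\mapsto y+a$ one has $\|T_af\|_{S^p}=\|f\|_{S^p}$ for every $a\in G$, and --- choosing the reference set $K$ symmetric, which is harmless by Lemma~\ref{lem:equiv_norms} --- also $\|f^{\dagger}\|_{S^p}=\|f\|_{S^p}$; trivially $\|\overline f\|_{S^p}=\||f|\|_{S^p}=\|f\|_{S^p}$ since the seminorm depends only on $|f|$. Applied to differences, these identities relate the various $\eps$-almost-period sets to that of $f$.

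For (c) I would note that on each translate $y+K$ the normalised measure $\theta_G(K)^{-1}\theta_G$ is a probability measure, so Jensen's inequality gives $\big(\theta_G(K)^{-1}\int_{y+K}|f|^{p'}\big)^{1/p'}\le\big(\theta_G(K)^{-1}\int_{y+K}|f|^{p}\big)^{1/p}$ pointwise in $y$; taking the supremum yields $\|\cdot\|_{S^{p'}}\le\|\cdot\|_{S^p}$, and applying it to $f-T_tf$ shows that each $S^p$-almost period is an $S^{p'}$-almost period, so $\s(G)\subseteq S^{p'}(G)$. Part (d) then follows: the inclusion $\subseteq$ is (c) with $p'=1$, while for $\supseteq$, if $f\in\se(G)\cap L^{\infty}(G)$ with $M:=\|f\|_{\infty}$ then $|f-T_tf|^p\le(2M)^{p-1}|f-T_tf|$ pointwise, hence $\|f-T_tf\|_{S^p}^p\le(2M)^{p-1}\|f-T_tf\|_{S^1}$, so an $S^1$-almost period for threshold $\eps^p/(2M)^{p-1}$ is an $S^p$-almost period for $\eps$, and relative density transfers.

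For (e), linearity of each operation together with the norm identities above gives $\|\overline f-T_t\overline f\|_{S^p}=\|f-T_tf\|_{S^p}$ and $\||f|-T_t|f|\|_{S^p}\le\|f-T_tf\|_{S^p}$ (reverse triangle inequality), so $\overline f$ and $|f|$ reuse the almost-period set of $f$; $T_tf$ is immediate from translation invariance; and from $T_tf^{\dagger}=(T_{-t}f)^{\dagger}$ one gets $\|f^{\dagger}-T_tf^{\dagger}\|_{S^p}=\|f-T_{-t}f\|_{S^p}$, so the almost-period set of $f^{\dagger}$ is the reflection of that of $f$, which is again relatively dense. Finally $\widetilde f=(\overline f)^{\dagger}$ combines the conjugation and reflection cases (and all these operations clearly preserve local $p$-integrability). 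For (a), I would fix the relatively dense set $A_1=\{t:\|f-T_tf\|_{S^p}<1\}$ and a compact $C$ with $G=A_1+C$; for arbitrary $y$ pick $\tau\in A_1$ with $y-\tau\in C$, split $f=T_{\tau}f+(f-T_{\tau}f)$ on $y+K$, and bound the $S^p$-average over $y+K$ by $1+\big(\theta_G(K)^{-1}\int_{C+K}|f|^p\big)^{1/p}$, which is finite and independent of $y$ because $f\in L^p_{\mathrm{loc}}(G)$ and $C+K$ is compact.

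The real obstacle is (b), the $S^p$-uniform continuity, precisely because $\|f-T_sf\|_{S^p}$ is a supremum over all $y\in G$, so the continuity of translation in $L^p$ on a single compact set cannot be invoked directly. The plan is to fix $\eps>0$, take $A_{\eps}$ relatively dense with $G=A_{\eps}+K_0$ ($K_0$ compact), and for each $y$ choose $\tau\in A_{\eps}$ with $y-\tau\in K_0$. I then write $f-T_sf=(f-T_{\tau}f)+(T_{\tau}f-T_sT_{\tau}f)+(T_sT_{\tau}f-T_sf)$ and estimate the three $S^p$-averages over $y+K$ separately: the first and third are $<\eps$ by the definition of $A_{\eps}$ and translation invariance, while the middle one, after the substitution $u=x-\tau$, becomes $\big(\theta_G(K)^{-1}\int_{y-\tau+K}|f-T_sf|^p\big)^{1/p}\le\big(\theta_G(K)^{-1}\int_{E}|f-T_sf|^p\big)^{1/p}$ with $E:=K_0+K$ a fixed compact set independent of $y$. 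This is the crux: on the fixed compact $E$ I replace $f$ by $\phi:=f\,\mathbbm 1_{E'}\in L^p(G)$ for a slightly larger compact $E'$, and the $L^p$-continuity of translation, $\|\phi-T_s\phi\|_{L^p(G)}\to0$ as $s\to0$, makes the middle term $<\eps$ for all $s$ in a neighbourhood $V$ of $0$ chosen independently of $y$. Summing gives $\|f-T_sf\|_{S^p}\le3\eps$ for $s\in V$, which is the claim after rescaling $\eps$.
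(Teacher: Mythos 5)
Your proposal is correct and follows essentially the same route as the paper's: the same splitting $f-T_sf=(f-T_{\tau}f)+(T_{\tau}f-T_{s+\tau}f)+(T_{s+\tau}f-T_sf)$ with $\tau$ chosen from the almost-period set so that $y-\tau$ lies in a fixed compact set (used for both (a) and (b)), Jensen on the normalised measure for (c), the interpolation bound $\|g\|_{S^p}^p\leqslant\|g\|_{\infty}^{p-1}\|g\|_{S^1}$ for (d), and the elementary norm identities for (e). The only difference is that your explicit truncation $\phi=f\cdot 1_{E'}$ in (b) spells out a reduction the paper leaves implicit when citing the $L^p$-continuity of translation for a merely locally $p$-integrable $f$.
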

\begin{proof}
(a) Since $f$ is Stepanov almost periodic, there is a relatively dense set $R$ such that 
\[
\sup_{y\in G} \left( \frac{1}{\theta_G(K)} \int_{y+K} |f(x)-T_tf(x)|^p\ \dd \theta_G(x) \right)^{\frac{1}{p}} < 1
\]
for all $t\in R$. Also, by the relative denseness of $R$, there is a compact set $K'$ such that, for every $y\in G$, there is a $t\in R$ such that $y-t\in K'$. We obtain
\begin{align*}
 \int_{y+K} |f(x)|^p\ \dd \theta_G(x)
    &\leqslant\int_{y+K} |f(x)-T_tf(x)|^p\ \dd \theta_G(x) + \int_{y+K} 
       |T_tf(x)|^p\ \dd \theta_G(x)   \\
    &\leqslant \theta_G(K)+\int_{K'+K} |f(x)|^p\ \dd \theta_G(x)
\end{align*}
uniformly in $y$. Hence, $\|f\|_{S^p}<+\infty$.

\medskip

\noindent (b) Fix $\eps>0$. Then, there is a relatively dense set $R=R(\eps)$ such that 
\[
\sup_{y\in G}\left( \frac{1}{\theta_G(K)}\int_{y+K} |f(x)-T_tf(x)|^p\ \dd \theta_G(x) \right)^{\frac{1}{p}} < \frac{\eps}{3} 
\]
for all $t\in R$. Similar to (a), there is a compact set $K'$ such that, for every $y\in G$, there is a $t\in R$ such that $y-t\in K'$. Then, by \cite[Thm. 1.1.5]{Rud}, there is a a neighbourhood $V$ of $0$ such that
\begin{align*}
\|f-T_sf\|_{S^p}
    &\leqslant \|f-T_tf\|_{S^p} + \|T_tf-T_{s+t}f\|_{S^p} + \|T_{s+t}f-T_sf
       \|_{S^p}  \\
    &< \frac{\eps}{3} + \sup_{y\in G} \left( \frac{1}{\theta_G(K)} \int_{y+K}
       |T_tf(x)-T_{s+t}f(x) |^p\ \dd \theta_G(x) \right)^{\frac{1}{p}} 
        +  \frac{\eps}{3}  \\
    &\leqslant \frac{2\,\eps}{3} + \left(\frac{1}{\theta_G(K)}\int_{K'+K} 
       |f(x)-T_sf(x)|^p \ \dd \theta_G(x)\right)^{\frac{1}{p}} \\
    &< \frac{2\,\eps}{3} + \frac{\eps}{3}\, =\, \eps   \,,   
\end{align*}
whenever $s\in V$. 

\medskip

\noindent (c) This follows from Jensens inequality with the convex function $x\mapsto x^{\frac{p'}{p}}$.

\medskip

\noindent (d) The inclusion $\s(G)\cap L^{\infty}(G) \subseteq \se(G)\cap L^{\infty}(G)$ follows immediately from (c), while $\s(G)\cap L^{\infty}(G) \supseteq \se(G)\cap L^{\infty}(G)$ is a consequence of $\|f\|_{\s} \leqslant \|f\|_{\infty}^{\frac{p-1}{p}} \, \|f\|_{\se}^{\frac{1}{p}}$.

\medskip 

\noindent (e) This follows from the definition.
\end{proof}

\begin{lemma}  \label{lem:bounded_stap}
Let $f\in\s(G)$, and let $\eps>0$. Then, there is a bounded function $b\in\s(G)$ such that
\[
\|f-b\|_{S^p} < \eps\,.
\]
\end{lemma}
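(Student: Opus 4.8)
The plan is to obtain $b$ by truncating the values of $f$. Let $\phi_N\colon\C\to\C$ denote the radial cut-off $\phi_N(z)=z$ for $|z|\leqslant N$ and $\phi_N(z)=Nz/|z|$ for $|z|>N$; this is the nearest-point projection onto the closed disc of radius $N$, hence $1$-Lipschitz and $\phi_N(0)=0$. I would set $b:=\phi_N\circ f$ for a suitably large $N$, so that $\|b\|_{\infty}\leqslant N$ is immediate and it only remains to verify that $b\in\s(G)$ and that $\|f-b\|_{S^p}$ can be made small. Note that $|f-b|=(|f|-N)^{+}$ pointwise.

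That $b$ is Stepanov almost periodic is essentially free: since $\phi_N$ is $1$-Lipschitz, $|b-T_tb|=|\phi_N\circ f-\phi_N\circ T_tf|\leqslant |f-T_tf|$ pointwise, whence $\|b-T_tb\|_{S^p}\leqslant\|f-T_tf\|_{S^p}$. Thus every $\eps$-almost period of $f$ is an $\eps$-almost period of $b$, so the relevant sets stay relatively dense and $b\in\s(G)$.

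The substance is the estimate $\|f-b\|_{S^p}<\eps$, and the key difficulty is that the supremum over all $y\in G$ in the $S^p$-norm is not controlled by a single compact integral; $\|f\|_{S^p}<\infty$ alone does not give tail uniform integrability. I would resolve this exactly as in the proof of Lemma~\ref{lem:2}(a): fix $\delta>0$, choose a relatively dense set $R$ of $\delta$-almost periods of $f$, and a compact set $K'$ such that every $y\in G$ admits some $t\in R$ with $y-t\in K'$. Writing $g:=f-b$ and using $|g-T_tg|\leqslant|f-T_tf|+|b-T_tb|\leqslant 2|f-T_tf|$ together with $T_t$-translation of the domain of integration (so that $(y-t)+K\subseteq K'+K$), I obtain for each $y\in G$
\[
\|g\|_{L^p(y+K)} \leqslant \|g-T_tg\|_{L^p(y+K)} + \|T_tg\|_{L^p(y+K)} \leqslant 2\,\theta_G(K)^{1/p}\,\delta + \Big(\int_{K'+K}|g|^p\ \dd\theta_G\Big)^{1/p}.
\]
Dividing by $\theta_G(K)^{1/p}$ and taking the supremum over $y$ yields $\|g\|_{S^p}\leqslant 2\delta+\big(\theta_G(K)^{-1}\int_{K'+K}|g|^p\ \dd\theta_G\big)^{1/p}$.

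Finally, $K'+K$ is compact and $f\in L_{\text{loc}}^p(G)$, so $\int_{K'+K}|f|^p\ \dd\theta_G<\infty$; since $|g|^p=\big((|f|-N)^{+}\big)^p\leqslant|f|^p$ and tends to $0$ almost everywhere as $N\to\infty$, dominated convergence makes $\int_{K'+K}|g|^p\ \dd\theta_G$ arbitrarily small. Choosing $\delta=\eps/4$ and then $N$ large enough that the compact-set term is below $\eps/2$ gives $\|f-b\|_{S^p}<\eps$. I expect the only genuinely delicate step to be this reduction of the global supremum to the fixed compact set $K'+K$, which is precisely where almost periodicity, rather than mere $S^p$-boundedness, is indispensable.
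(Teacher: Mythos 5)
Your proof is correct and follows essentially the same route as the paper: truncate $f$ radially, use the $1$-Lipschitz property of the truncation to keep Stepanov almost periodicity, and exploit a relatively dense set of almost periods to reduce the global supremum to a fixed compact set $K'+K$ before letting the truncation level grow. The only cosmetic difference is that you finish with dominated convergence on $K'+K$ where the paper invokes the Chebyshev-type inequality for $\theta_G(\{|f|>L\})$; your version is, if anything, slightly more carefully justified.
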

\begin{proof}
Let $L\in\N$. Define $f_L:G\to\C$ by
\[
f_L(x) = 
\begin{cases}
f(x),& \text{ if } |f(x)|\leqslant L,  \\
L\, \frac{f(x)}{|f(x)|}, & \text{ otherwise}. 
\end{cases}
\]
Since  
\[
|f_L(x)-f_L(y)|\leqslant |f(x)-f(y)|
\]
for any pair $x,y\in G$, one has $f_L\in\s(G)$. 

Next, let $K$ be a compact subset of $G$. Then, there is a relatively dense set $R$ such that
\[
\|f_L-T_tf_L\|_{S^p} \leqslant \|f-T_tf\|_{S^p} < \frac{\eps}{3}
\]
for all $t\in R$. Since $R$ is relatively dense, there is a compact set $K'$ such that
\begin{align} \label{eq:zurhilfe1}
\|f-f_L\|_{S^p}  \notag
    &\leqslant \|f-T_tf\|_{S^p} + \|T_tf-T_tf_L\|_{S^p} + \|T_tf_L-f_L\|_{S^p} \\
    &\leqslant \frac{\eps}{3} + \left( \frac{1}{\theta_G(K)} \int_{K'+K} 
        |f(x)-f_L(x)|^p\ \dd \theta_G(x) \right)^{\frac{1}{p}}+\frac{\eps}{3} \\
    &= \frac{2\,\eps}{3} +  \left( \frac{1}{\theta_G(K)} \int_{(K'+K)\,\cap\,
        \textsf{S}_L} |f(x)-f_L(x)|^p\ \dd \theta_G(x) \right)^{\frac{1}{p}} \,, \notag         
\end{align}
where $\textsf{S}_L:=\{ x\in G\ |\ |f(x)|>L\}$. It follows from the following standard inequality from measure theory
\[
\mu\left( \{ x\in X\ |\ |f(x)|>L\}\right) \leqslant \frac{1}{L}\, 
\|f\|_{L^1(X,\mu)} 
\] 
that there is $L_0\in\N$ such that
\begin{equation} \label{eq:zurhilfe2}
\left( \frac{1}{\theta_G(K)} \int_{(K'+K)\,\cap\, \textsf{S}_{L_0}} |f(x)-f_{L_0}(x)|^p\ \dd \theta_G(x) \right)^{\frac{1}{p}} < \frac{\eps}{3}  \,.
\end{equation}
Finally, Eqs.~\eqref{eq:zurhilfe1} and \eqref{eq:zurhilfe2} imply
\[
\|f-f_{L_0}\|_{S^p} < \eps \,.
\]
The claim follows for $b:=f_{L_0}$.
\end{proof}

If $G=\R$, Stepanov almost periodic functions are sometimes defined as the closure of the set of trigonometric polynomials with respect to the norm $\|\cdot\|_{S^1}$. Then, it is shown that this is equivalent to the definition given above, which is also equivalent to the statement that the set $\{T_tf\ |\ t\in G\}$ is precompact in the $\|\cdot \|_{S^1}$-topology. The next two propositions will show that this equivalence is still true for second countable locally compact Abelian groups.

\begin{proposition}  \label{prop:main2}
For $f\in L_{\text{loc}}^p(G)$ the following statements are equivalent:
\begin{enumerate}
\item[(1)] The function $f$ is Stepanov $p$-almost periodic.
\item[(2)] The orbit $\{T_tf \ |\  t \in G\}$ is precompact in the $\|\cdot\|_{S^p}$-topology.
\end{enumerate}
\end{proposition}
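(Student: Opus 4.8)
The plan is to prove this as a Bochner-type equivalence between almost periodicity and precompactness of the orbit, resting on two structural facts. First, I would record that the Stepanov seminorm is translation invariant: since Haar measure is invariant and the defining supremum ranges over all $y\in G$, one has $\|T_s g\|_{S^p}=\|g\|_{S^p}$ for every $s\in G$. Combined with the commutativity of translations in the Abelian group $G$, this yields the identity $\|T_tf-T_sf\|_{S^p}=\|T_{t-s}f-f\|_{S^p}$, which is the workhorse in both directions. Second, by the preceding proposition $\big(BS_K^p(G),\|\cdot\|_{S^p}\big)$ is complete, so a subset is precompact exactly when it is totally bounded; hence in each direction it suffices to argue with finite $\eps$-nets. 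I would also note that the orbit lies in $BS_K^p(G)$ by Lemma~\ref{lem:2}(a), so the ambient complete space is the right one.

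For the implication $(2)\Rightarrow(1)$, fix $\eps>0$ and use total boundedness to pick finitely many $t_1,\dots,t_n\in G$ so that the $\eps$-balls around $T_{t_1}f,\dots,T_{t_n}f$ cover the orbit. Given any $t\in G$, choose $i$ with $\|T_tf-T_{t_i}f\|_{S^p}<\eps$; by the invariance identity this reads $\|T_{t-t_i}f-f\|_{S^p}<\eps$, so $t-t_i$ is an $\eps$-almost period. Writing $P_\eps:=\{s\in G\mid\|f-T_sf\|_{S^p}<\eps\}$, this shows $t\in t_i+P_\eps$ for some $i$, hence $G=P_\eps+\{t_1,\dots,t_n\}$. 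As $\{t_1,\dots,t_n\}$ is compact, $P_\eps$ is relatively dense, which is exactly statement (1).

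The implication $(1)\Rightarrow(2)$ is the substantial one. Fix $\eps>0$. By almost periodicity the set $P_{\eps/3}$ is relatively dense, so $G=P_{\eps/3}+C$ for some compact $C$. By the $S^p$-uniform continuity of Lemma~\ref{lem:2}(b), there is a neighbourhood $V$ of $0$ with $\|f-T_sf\|_{S^p}<\eps/3$ whenever $s\in V$. I would then cover the compact set $C$ by finitely many translates $c_1+V,\dots,c_n+V$ with $c_i\in C$. For an arbitrary $t\in G$, write $t=q+c$ with $q\in P_{\eps/3}$, $c\in C$, and choose $i$ with $c-c_i\in V$. Using translation invariance and commutativity, $\|T_tf-T_cf\|_{S^p}=\|T_qf-f\|_{S^p}<\eps/3$ and $\|T_cf-T_{c_i}f\|_{S^p}=\|T_{c-c_i}f-f\|_{S^p}<\eps/3$, so $\|T_tf-T_{c_i}f\|_{S^p}<2\eps/3<\eps$. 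Thus $\{T_{c_1}f,\dots,T_{c_n}f\}$ is a finite $\eps$-net for the orbit, which is therefore totally bounded and, by completeness, precompact.

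The main obstacle is this forward direction, and concretely the need to control the gaps of the relatively dense set of almost periods. Relative denseness only gives a cover of $G$ by translates of $P_{\eps/3}$ through a compact set $C$; converting this into a \emph{finite} net forces me to combine it with the uniform-continuity input of Lemma~\ref{lem:2}(b) and with the compactness of $C$ (here local compactness of $G$ enters, allowing the finite subcover by translates of $V$). Once the invariance identity $\|T_tf-T_sf\|_{S^p}=\|T_{t-s}f-f\|_{S^p}$ is in place, everything else is formal.
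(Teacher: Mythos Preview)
Your proof is correct and follows essentially the same Bochner-type strategy as the paper's: for $(1)\Rightarrow(2)$ you combine the relative denseness of almost periods with the $S^p$-uniform continuity of Lemma~\ref{lem:2}(b) and a finite cover of the compact remainder set, and for $(2)\Rightarrow(1)$ you read off relative denseness directly from a finite $\eps$-net via the invariance identity. The only cosmetic difference is that the paper phrases the forward direction through an explicit invariant metric on $G$ (covering a ball $B_r(0)$ by smaller balls), whereas your purely topological version using the neighbourhood $V$ and the finite set $\{c_1,\dots,c_n\}$ is marginally cleaner and does not need the metric.
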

\begin{proof}
First, let us recall that a set in a complete metric space is precompact if and only if it is totally bounded (that is, for every $\eps>0$, it can be covered by a union of finitely many balls of radius $\eps$). 

\medskip

\noindent (1)$\implies$(2): Let $\eps>0$. Since $f\in\s(G)$, there is a relatively dense set $R$ of $\frac{\eps}{2}$-almost periods. Let $K$ be a corresponding compact set, i.e. $R+K=G$, and let $r>0$ be such that $K\subseteq B_r(0)$. By our choice of the metric, the ball $B_{r}(0)$ is precompact, and hence can be covered by finitely many balls of radius, say, $\delta > 0$. Consequently, due to Lemma~\ref{lem:2}(b), there are elements $z_1,\ldots,z_m\in B_{\delta}(0)$ such that
\[
\inf_{1\leqslant j\leqslant m} \|T_{x_0}f-T_{z_j}f\|_{S^p} < \frac{\eps}{2}
\]
for all $x_0\in B_{r}(0)$. For an arbitrary $x\in G$, there is a $t\in R$ and an  $x_0\in B_r(0)$ such that $x=t+x_0$. In that case, we obtain
\[
\|T_xf-T_{x_0}f\|_{S^p}  = \|T_{t+x_0}f-T_{x_0}f\|_{S^p}  = \|T_{t}f-f\|_{S^p} < \frac{\eps}{2} \,.
\]
Therefore, we have
\[
\inf_{1\leqslant j\leqslant m} \|T_xf-T_{z_j}\|_{S^p} \leqslant  \|T_xf-T_{x_0}f\|_{S^p}+ \inf_{1\leqslant j\leqslant m} \|T_{x_0}f-T_{z_j}f\|_{S^p}  < \frac{\eps}{2} + \frac{\eps}{2} = \eps\,,
\]
and $\{T_tf\ |\ t\in G\}$ is covered by the union of balls of radius $\eps$, centered at $T_{x_j}f$, $1\leqslant j\leqslant m$. Therefore, it is precompact because $L_{\text{loc}}^p(G)$ is a complete metric space. 

\medskip

\noindent (2)$\implies$(1): Fix $\eps>0$. Let $\mathbf{B}_1,\ldots, \mathbf{B}_m$ be balls of radius $\frac{\eps}{2}$ such that $\{T_tf\ |\ t\in G\}$ is covered by the union $\bigcup_{j=1}^m\mathbf{B}_j$. Without loss of generality, we can assume that $\mathbf{B}_j\cap \{T_tf\ |\ t\in G\}\neq\varnothing$. Hence, we can pick $T_{t_j}f\in \mathbf{B}_j$, $1\leqslant j\leqslant m$. The balls of radius $\eps$ centered at $T_{t_j}f$ cover $\{T_tf\ |\ t\in G\}$. 

Now, we claim that every ball $B$ with radius $\rho:=2\, \max_{1\leqslant j\leqslant m} d(t_j,0)$ contains an $\eps$-almost period of $f$. If $B$ is such a ball, denote by $x$ its centre. There is a $j_0$ such that
\[
\|T_xf-T_{t_{j_0}}f\|_{S^p} < \eps\,.
\]
Writing $t=x-t_{j_0}$, it is clear that $t\in B$ and
\[
\|T_tf-f\|_{S^p} = \|T_{x-t_{j_0}}f-f\|_{S^p} = \|T_{x}f-T_{t_{j_0}}f\|_{S^p} < \eps\,.
\]
Therefore, $t$ is an $\eps$-almost period of $f$.
\end{proof}

Sometimes, Stepanov almost periodic functions on $\R$ are defined differently, namely as limits of trigonometric polynomials with respect to $\|\cdot\|_{S^1}$. As we will now see, these two properties are also equivalent for second countable locally compact Abelains groups. 

\begin{proposition} \label{prop:main1}
For $f\in L_{\text{loc}}^p(G)$, the following statements are equivalent:
\begin{enumerate}
\item[(1)] The function $f$ is Stepanov $p$-almost periodic.
\item[(2)] $f$ is the $\|\cdot\|_{S^p}$-limit of a sequence of trigonometric polynomials.
\end{enumerate}
\end{proposition}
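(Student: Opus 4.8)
The plan is to prove the two implications separately; (2)$\implies$(1) is routine, while (1)$\implies$(2) carries the real content. For (2)$\implies$(1) I would first observe that every trigonometric polynomial lies in $\s(G)$: each character $\chi\in\widehat{G}$ is a bounded, continuous, Bohr almost periodic function, so its sup-norm $\eps$-almost periods are relatively dense, and since $\|g\|_{S^p}\le\|g\|_{\infty}$ for bounded $g$ these are also $S^p$-$\eps$-almost periods. A finite linear combination of characters is again bounded and Bohr almost periodic, hence Stepanov $p$-almost periodic. I would then show that $\s(G)$ is closed in the $\|\cdot\|_{S^p}$-topology by the standard $\eps/3$-argument: if $P_n\to f$ in $\|\cdot\|_{S^p}$ with $P_n\in\s(G)$, pick $n$ with $\|f-P_n\|_{S^p}<\eps/3$, and for any $\eps/3$-almost period $t$ of $P_n$ use translation-invariance of $\|\cdot\|_{S^p}$ to bound $\|f-T_tf\|_{S^p}\le\|f-P_n\|_{S^p}+\|P_n-T_tP_n\|_{S^p}+\|T_t(P_n-f)\|_{S^p}<\eps$. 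Hence $f\in\s(G)$.

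For (1)$\implies$(2) the idea is to pass to the much better understood class of Bohr almost periodic functions by smoothing. Fix $\eps>0$ and choose $\phi\in\Cc(G)$ with $\phi\ge 0$, $\int_G\phi\,\dd\theta_G=1$, and $\supp\phi$ contained in a neighbourhood $V$ of $0$ to be specified. Using $\|f\|_{S^p}<\infty$ from Lemma~\ref{lem:2}(a) together with $\|f\|_{S^1}\le\|f\|_{S^p}$ from Lemma~\ref{lem:2}(c), a covering of $\supp\phi$ by finitely many translates of $K$ shows that $f*\phi$ is bounded, and it is continuous by the usual argument. More importantly, translation commutes with convolution, so for every $t\in G$ one obtains the key estimate $\|T_t(f*\phi)-f*\phi\|_{\infty}=\|(T_tf-f)*\phi\|_{\infty}\le C\,\|T_tf-f\|_{S^p}$, where $C$ depends only on $\phi$ and $K$. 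Consequently the relatively dense sets of $S^p$-$\eps$-almost periods of $f$ yield relatively dense sets of sup-norm almost periods of $f*\phi$, so $f*\phi\in\SAP(G)$ is Bohr almost periodic and can therefore be approximated uniformly (hence, since $\|\cdot\|_{S^p}\le\|\cdot\|_{\infty}$, also in $\|\cdot\|_{S^p}$) by trigonometric polynomials, by the classical approximation theorem for Bohr almost periodic functions on locally compact Abelian groups \cite{Eb}.

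It then remains to show that $f$ itself is well approximated by its smoothings in the Stepanov norm. Writing $f*\phi-f=\int_G(T_yf-f)\,\phi(y)\,\dd\theta_G(y)$ and applying Minkowski's integral inequality inside the defining supremum of $\|\cdot\|_{S^p}$ gives
\[
\|f*\phi-f\|_{S^p}\le\int_G\|T_yf-f\|_{S^p}\,\phi(y)\,\dd\theta_G(y)\,.
\]
By Lemma~\ref{lem:2}(b) I may choose the neighbourhood $V$ so that $\|T_yf-f\|_{S^p}<\eps$ for all $y\in V$; since $\supp\phi\subseteq V$ this forces $\|f*\phi-f\|_{S^p}<\eps$. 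Combining this with the trigonometric approximation of $f*\phi$ produces a trigonometric polynomial within $2\eps$ of $f$ in $\|\cdot\|_{S^p}$, and letting $\eps$ run through a null sequence yields the required approximating sequence, proving (2).

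I expect the main obstacle to be the verification that $f*\phi$ is genuinely Bohr almost periodic: namely, establishing the inequality $\|(T_tf-f)*\phi\|_{\infty}\le C\,\|T_tf-f\|_{S^p}$ cleanly and confirming that $f*\phi$ is bounded and uniformly continuous despite $f$ being merely locally $p$-integrable. Once the passage $\s(G)\rightsquigarrow\SAP(G)$ is secured, the remaining steps are standard. The $S^p$-uniform continuity from Lemma~\ref{lem:2}(b) is the crucial input that makes the smoothing error controllable; note that Lemma~\ref{lem:bounded_stap} is not strictly needed for this route, since $f*\phi$ is automatically bounded.
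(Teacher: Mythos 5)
Your proposal is correct and follows essentially the same route as the paper: the paper's proof of (1)$\implies$(2) smooths $f$ by averaging over small balls, $f_{\eta}(x)=\theta_G(B_{\eta}(x))^{-1}\int_{B_{\eta}(x)}f$, which is just convolution with a normalized indicator in place of your $C_{\mathsf{c}}$ mollifier $\phi$, and then runs exactly your three steps (uniform continuity and Bohr almost periodicity of the smoothing via the transfer of Stepanov almost periods, smallness of $\|f-f_{\eta}\|_{S^p}$ via Minkowski's integral inequality together with Lemma~\ref{lem:2}(b), and uniform approximation of the Bohr almost periodic smoothing by trigonometric polynomials). The (2)$\implies$(1) direction is the same $\eps/3$ argument in both.
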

\begin{proof}
\noindent (2)$\implies$(1): Let $f$ be the $\|\cdot\|_{S^p}$-limit of a sequence of trigonometric polynomials $(p_n)_{n\in\N}$, and let $\eps>0$. Then, there is a number $n\in\N$ such that 
\[
\|f-p_n\|_{S^p} < \frac{\eps}{3}\,.
\]
Also, since every trigonometric polynomial is Bohr almost periodic and hence Stepanov almost periodic, there is a relatively dense set of $t\in G$ such that
\[
\|p_n-T_tp_n\|_{S^p} < \frac{\eps}{3} \quad \ \text{ for all such } t\in G\,.
\]
This implies
\[
\|f-T_tf\|_{S^p} \leqslant \|f-p_n\|_{S^p} + \|p_n-T_tp_n\|_{S^p} + \|T_tp_n-T_tf_n\|_{S^p} < \frac{\eps}{3} + \frac{\eps}{3} + \frac{\eps}{3} = \eps\,.
\] \smallskip
\noindent (1)$\implies$(2): Let $f\in \s(G)$. Consider the function
\[
f_{\eta}(x) := \frac{1}{\theta_G(B_{\eta}(x))} \int_{B_{\eta}(x)} f(y)\ \dd \theta_G(y)\,.
\] \smallskip
\noindent (i) We will first show that, for fixed $\eta$, the function $f_{\eta}$ is continuous. 

Fix $\eps>0$, and let $q$ be such that $\frac{1}{p} + \frac{1}{q}=1$. By Lemma~\ref{lem:2} and H\"olders inequality, there is $\delta>0$ such that 
\begin{align*}
|f_{\eta}(x)-f_{\eta}(x+h)|
    &= \left| \frac{1}{\theta_G(B_{\eta}(x))}\int_{B_{\eta}(x)} f(y)\ 
        \dd \theta_G(y) - \frac{1}{\theta_G(B_{\eta}(x+h))}
        \int_{B_{\eta}(x+h)} f(y)\ \dd \theta_G(y) \right|  \\
    &\leqslant \frac{1}{\theta_G(B_{\eta}(x))} \int_{B_{\eta}(x)}
       |f(y)-T_{-h}f(y)|\   \dd \theta_G(y)     \\
    &\leqslant \frac{1}{\theta_G(B_{\eta}(x))}\, \theta_G(B_{\eta}
        (x))^{\frac{1}{q}} \, \left( \int_{B_{\eta}(x)}
       |f(y)-T_{-h}f(y)|^p\   \dd \theta_G(y) \right)^{\frac{1}{p}}      \\
    &\leqslant \|f-T_{-h}f\|_{S_{B_{\eta}(x)}^p} \, <\, \eps,      
\end{align*}
whenever $h\in B_{\delta}(0)$. Hence, $f_{\eta}$ is continuous.

\medskip

\noindent (ii) Next, we will show that $f_{\eta}$ is a Bohr almost periodic function.

Let $t$ be a Stepanov $\eps$-almost period of $f$. Then, we have 
\begin{align*}
|f_{\eta}(x)-T_tf_{\eta}(x)|
    &= \left| \frac{1}{\theta_G(B_{\eta}(x))} \int_{B_{\eta}(x)} f(y)\ 
        \dd \theta_G(y) - \frac{1}{\theta_G(B_{\eta}(x-t))}\int_{B_{\eta}(x-t)} 
        f(y)\ \dd \theta_G(y) \right|  \\
    &\leqslant \frac{1}{\theta_G(B_{\eta}(x))}\int_{B_{\eta}(x)} 
       |f(y)-T_{t}f(y)|\   \dd \theta_G(y)     \\
    &\leqslant \|f-T_{t}f\|_{S^p} \, <\, \eps,
\end{align*}
independent of $x$. Consequently, $f_{\eta}$ is Bohr almost periodic.

\medskip

\noindent (iii) We will now prove that $\lim_{\eta\to0} \|f-f_{\eta}\|_{S^p} =0$.

Fix $\eps>0$. By Lemma~\ref{lem:2}, there is $\eta>0$ such that 
\[
h\in B_{\eta}(0) \implies \|f-T_hf\|_{S^p}<\eps\,.
\]
This implies via Minkowski's integral inequality 
\begin{align*}
\|f-f_{\eta}\|_{S^p}
    &= \sup_{y\in G} \left( \frac{1}{\theta_G(K)} \int_{y+K} \bigg| f(x)
         - \frac{1}{\theta_G(B_{\eta}(x))} \int_{B_{\eta}(x)} f(z)\ 
          \dd \theta_G(z) \bigg|^p\ \dd \theta_G(x) \right)^{\frac{1}{p}}  \\
    &=  \sup_{y\in G} \Bigg( \frac{1}{\theta_G(K)}\int_{y+K} \bigg| 
        \frac{1}{\theta_G(B_{\eta}(0))} \int_{B_{\eta}(0)} f(x)\ \dd 
         \theta_G(z) \\
    &\phantom{======} - \frac{1}{\theta_G(B_{\eta}(0))} \int_{B_{\eta}(0)} 
       f(z+x)\ \dd \theta_G(z) \bigg|^p\ \dd \theta_G(x) \Bigg)^{\frac{1}{p}} \\
    &\leqslant  \sup_{y\in G} \frac{1}{\theta_G(K)^{\frac{1}{p}}} \Bigg(  
       \int_{y+K} \Bigg( \frac{1}{\theta_G(B_{\eta}(0))} \int_{B_{\eta}(0)}
       |f(x) - f(x+z)|\ \dd \theta_G(z) \Bigg)^p \ \dd \theta_G(x) \Bigg)^{
        \frac{1}{p}} \\
    &\leqslant \sup_{y\in G} \frac{1}{\theta_G(K)^{\frac{1}{p}}} 
       \frac{1}{\theta_G(B_{\eta}(0))} \int_{B_{\eta}(0)} \Bigg( \int_{y+K} 
       |f(x) - f(x+z)|^p \ \dd \theta_G(x) \Bigg)^{\frac{1}{p}} \ \dd 
       \theta_G(z) \\
    &\leqslant  \frac{1}{\theta_G(B_{\eta}(0))}  \int_{B_{\eta}(0)} 
      \|f-T_{-z}f\|_{S^p} \ \dd \theta_G(z)    \\
    &< \eps\,\frac{1}{\theta_G(B_{\eta}(0))}\,\theta_G(B_{\eta}(0)) \, =\,
       \eps \,.      
\end{align*}

\medskip

\noindent (iv) Finally, we can prove the claim.

Fix $\eps>0$. By step (iii), there is $\eta>0$ such that 
\[
\|f-f_{\eta}\|_{S^p} < \frac{\eps}{2}\,.
\]
Moreover, $f_{\eta}$ is Bohr almost periodic by step (ii). Hence, there is a trigonometric polynomial $P$ such that 
\[
\|f_{\eta}-P\|_{S^p} \leqslant \|f_{\eta}-P\|_{\infty} <  \frac{\eps}{2}\,.
\]
Therefore, we obtain
\[
\|f-P\|_{S^p} \leqslant \|f-f_{\eta}\|_{S^p} + \|f_{\eta}-P\|_{S^p} < \eps\,, 
\]
which finishes the proof.
\end{proof}

Now, Propositions~\ref{prop:main1} and \ref{prop:main2} establish the equivalence of the three statements about Stepanov almost periodic functions mentioned above.

\begin{theorem} \label{thm:mainstap}
For $f\in L_{\text{loc}}^p(G)$ the following statements are equivalent.
\begin{enumerate}
\item[(1)] The function $f$ is Stepanov $p$-almost periodic.
\item[(2)] $f$ is the $\|\cdot\|_{S^p}$-limit of a sequence of trigonometric polynomials.
\item[(3)] The orbit $\{T_tf \ |\  t \in G\}$ is precompact in the $\|\cdot\|_{S^p}$-topology.
\end{enumerate} 
\end{theorem}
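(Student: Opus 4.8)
The plan is to observe that the theorem is an immediate consequence of the two preceding propositions, which together already pin down all the equivalences; no new argument is needed beyond bookkeeping. Concretely, statement (1) of the theorem is verbatim statement (1) of both Proposition~\ref{prop:main2} and Proposition~\ref{prop:main1}, statement (2) of the theorem is statement (2) of Proposition~\ref{prop:main1}, and statement (3) of the theorem is statement (2) of Proposition~\ref{prop:main2}. The common statement (1) will serve as the hub through which the other two are linked.

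First I would invoke Proposition~\ref{prop:main1} to record the equivalence (1) $\iff$ (2): a function $f\in L_{\text{loc}}^p(G)$ is Stepanov $p$-almost periodic if and only if it is the $\|\cdot\|_{S^p}$-limit of a sequence of trigonometric polynomials. Next I would invoke Proposition~\ref{prop:main2} to record the equivalence (1) $\iff$ (3): $f$ is Stepanov $p$-almost periodic if and only if its orbit $\{T_tf \mid t\in G\}$ is precompact in the $\|\cdot\|_{S^p}$-topology. Chaining these through the common middle statement (1) then yields (2) $\iff$ (1) $\iff$ (3), so all three statements are equivalent and the proof is complete.

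Since the two propositions do all the work, there is no genuine obstacle here; the only points to keep in mind are matters of consistency. Both propositions are stated for the same function class $f\in L_{\text{loc}}^p(G)$ and with respect to the same norm $\|\cdot\|_{S^p}$, which by Lemma~\ref{lem:equiv_norms} is independent of the choice of the defining compact set $K$, so the three notions are genuinely comparable and the result does not depend on $K$. One should also remember that Propositions~\ref{prop:main1} and \ref{prop:main2} rely on second countability of $G$ (used, for instance, for the metrisability and the total-boundedness arguments), so the theorem inherits that standing hypothesis.
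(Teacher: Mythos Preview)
Your proposal is correct and matches the paper's approach exactly: the paper states the theorem immediately after Propositions~\ref{prop:main2} and \ref{prop:main1} with only the remark that these two propositions establish the equivalence, and gives no further proof. Your observation that statement~(1) serves as the common hub linking (2) and (3) via the two propositions is precisely the intended bookkeeping.
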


\begin{corollary}
The space of Bohr almost periodic functions is dense in $\s(G)$.
\end{corollary}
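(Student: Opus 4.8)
The plan is to deduce this directly from Theorem~\ref{thm:mainstap}, since the corollary is essentially a repackaging of the equivalence (1)$\Leftrightarrow$(2). The two ingredients I need are a chain of inclusions and a density statement, and the heavy lifting has already been done.

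First, I would record the inclusion of the space of Bohr almost periodic functions into $\s(G)$. If $g$ is Bohr almost periodic, then for every $\eps>0$ the set $\{t\in G \ |\ \|g-T_tg\|_{\infty}<\eps\}$ is relatively dense in $G$. The elementary estimate $\|h\|_{S^p}\leqslant\|h\|_{\infty}$, which follows immediately from the definition of $\|\cdot\|_{S^p}$ by bounding $|h(x)|^p$ by $\|h\|_{\infty}^p$ on $y+K$, shows that this set is contained in $\{t\in G \ |\ \|g-T_tg\|_{S^p}<\eps\}$. Hence the latter set is also relatively dense, so $g\in\s(G)$. In particular every trigonometric polynomial, being Bohr almost periodic, lies in $\s(G)$, and the trigonometric polynomials form a subset of the space of Bohr almost periodic functions.

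Second, I would invoke Theorem~\ref{thm:mainstap}: for an arbitrary $f\in\s(G)$, the implication (1)$\Rightarrow$(2) produces a sequence of trigonometric polynomials converging to $f$ in the $\|\cdot\|_{S^p}$-topology. Since these trigonometric polynomials are in particular Bohr almost periodic, $f$ lies in the $\|\cdot\|_{S^p}$-closure of the space of Bohr almost periodic functions. As $f$ was arbitrary, this closure contains all of $\s(G)$, which is precisely the asserted density.

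There is no genuine obstacle here; the only new observation beyond Theorem~\ref{thm:mainstap} is the norm inequality $\|\cdot\|_{S^p}\leqslant\|\cdot\|_{\infty}$ needed to situate the Bohr almost periodic functions inside $\s(G)$. If one prefers a purely set-theoretic phrasing, one can simply note that a set (the trigonometric polynomials) that is already dense in the ambient space $\s(G)$ remains dense when enlarged to any intermediate set, so the space of Bohr almost periodic functions, sandwiched between the trigonometric polynomials and $\s(G)$, is \emph{a fortiori} dense.
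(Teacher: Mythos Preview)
Your proof is correct and follows essentially the same route as the paper: invoke the density of trigonometric polynomials in $\s(G)$ (Theorem~\ref{thm:mainstap}, equivalently Proposition~\ref{prop:main1}) and observe that trigonometric polynomials are Bohr almost periodic. The only difference is that you spell out the inclusion $\text{SAP}(G)\subseteq\s(G)$ via $\|\cdot\|_{S^p}\leqslant\|\cdot\|_{\infty}$, which the paper's one-line proof leaves implicit here but establishes elsewhere (see the $\supseteq$ direction of Proposition~\ref{prop:s_apcu}).
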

\begin{proof}
This immediately follows from Proposition~\ref{prop:main1} because every element of $\s(G)$ can be approximated by trigonometric polynomials, which are Bohr almost periodic.
\end{proof}

\begin{remark}
As a consequence of Proposition~\ref{prop:main1}, we find that $(\s(G),\|\cdot\|_{S^p})$ is a normed vector space. In fact, if $f,g\in\s(G)$ and if $(p_n)_{n\in\N},(q_n)_{n\in\N}$ are sequences of trigonometric polynomials which approximate $f$ and $g$, respectively, one has
\[
f + g = \lim_{n\to\infty} p_n + \lim_{n\to\infty} q_n = \lim_{n\to\infty} (p_n + q_n) \in\s(G) \,,
\]
and, in the same way, $c f\in\s(G)$ for every $c\in \C$.
\end{remark}

We can say even more. The pair $(\s(G),\|\cdot\|_{S^p})$ is not only a normed vector space but also a Banach space.

\begin{proposition}
$\big(\s(G), \| \cdot\|_{S^p}\big)$ is a Banach space.
\end{proposition}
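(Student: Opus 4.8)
The plan is to realise $\big(\s(G),\|\cdot\|_{S^p}\big)$ as a closed linear subspace of the Banach space $\big(BS_K^p(G),\|\cdot\|_{S^p}\big)$ whose completeness was established above; since a closed subspace of a Banach space is again a Banach space, this suffices. Two preliminary facts are already at hand: by Lemma~\ref{lem:2}(a) every Stepanov $p$-almost periodic function is $S^p$-bounded, so $\s(G)\subseteq BS_K^p(G)$; and by the Remark following Theorem~\ref{thm:mainstap}, $\s(G)$ is a vector space. Thus the only thing left to check is that $\s(G)$ is closed in the $\|\cdot\|_{S^p}$-topology.

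To prove closedness directly, I would take a sequence $(f_n)_{n\in\N}$ in $\s(G)$ with $\|f_n-f\|_{S^p}\to 0$ for some $f\in BS_K^p(G)$, and show $f\in\s(G)$ by exhibiting, for each $\eps>0$, a relatively dense set of $\eps$-almost periods of $f$. The key structural observation is that translation is an isometry for the Stepanov norm, i.e. $\|T_tg\|_{S^p}=\|g\|_{S^p}$ for all $t\in G$; this holds because the supremum defining $\|\cdot\|_{S^p}$ ranges over all $y\in G$ and is therefore invariant under the substitution $y\mapsto y+t$. Choosing $n$ with $\|f-f_n\|_{S^p}<\eps/3$ and letting $R$ be a relatively dense set of $(\eps/3)$-almost periods of $f_n$, the triangle inequality together with this isometry gives, for every $t\in R$,
\[
\|f-T_tf\|_{S^p}\leqslant \|f-f_n\|_{S^p}+\|f_n-T_tf_n\|_{S^p}+\|T_t(f_n-f)\|_{S^p}<\tfrac{\eps}{3}+\tfrac{\eps}{3}+\tfrac{\eps}{3}=\eps\,,
\]
so $R$ consists of $\eps$-almost periods of $f$, whence $f\in\s(G)$ and $\s(G)$ is closed.

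Alternatively, and more economically, I could invoke Proposition~\ref{prop:main1}, which identifies $\s(G)$ with the set of $\|\cdot\|_{S^p}$-limits of sequences of trigonometric polynomials, i.e. with the $\|\cdot\|_{S^p}$-closure of the trigonometric polynomials inside $BS_K^p(G)$. A closure is automatically a closed set, so closedness is immediate and the Banach space property follows at once.

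I do not expect a genuine obstacle here: the content is entirely carried by the two facts supplied earlier, namely the completeness of $BS_K^p(G)$ and the characterisations of $\s(G)$. The only point requiring a moment of care is the translation-invariance of $\|\cdot\|_{S^p}$ used in the direct argument, which must be verified by the change of variables in the defining supremum; everything else is a routine closed-subspace argument.
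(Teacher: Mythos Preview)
Your proposal is correct and your direct argument is essentially identical to the paper's: both embed $\s(G)$ into the Banach space $BS_K^p(G)$ via Lemma~\ref{lem:2}(a) and then prove closedness by the same $\eps/3$-triangle-inequality computation, using translation-invariance of $\|\cdot\|_{S^p}$ to handle the third term. Your alternative via Proposition~\ref{prop:main1} (identifying $\s(G)$ as the closure of the trigonometric polynomials) is a valid shortcut the paper does not take, but it encodes the same content.
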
 
\begin{proof}
Every element $f\in \s(G)$ is an element of $BS^p(G)$ by Lemma~\ref{lem:2}(a). Since the space $(BS^p(G),\|\cdot\|_{S^p})$ is a Banach space, it suffices to show that $\s(G)$ is closed.

In order to do so, let $(f_n)_{n\in\N}$ be a sequence in $\s(G)$ that converges to some function $f$ in the $\|\cdot\|_{S^p}$-topology. First, this means that, given $\eps>0$, there is an element $n_0\in\N$ such that
\[
\|f-f_n\|_{S^p} < \frac{\eps}{3} \quad \ \text{ for all } n\geqslant n_0 \,.
\]
Second, for such an $n$, there is a relatively dense set of elements $t\in G$ such that
\[
\|f_n-T_tf_n\|_{S^p} < \frac{\eps}{3} \quad \ \text{ for all such } t\,.
\]
Finally, one has
\[
\|f-T_tf\|_{S^p} \leqslant \|f-f_n\|_{S^p} + \|f_n-T_tf_n\|_{S^p} + \underbrace{\|T_tf_n-T_tf\|_{S^p}}_{=\,\|f-f_n\|_{S^p}} < \frac{\eps}{3} + \frac{\eps}{3} + \frac{\eps}{3}=\eps\,,  
\] 
for all such $t$, which finishes the proof.
\end{proof}

The space $\big(\s(G), \| \cdot\|_{S^p}\big)$ is closed under addition, but it is not closed under multiplication. To see this, consider the function $\phi(x) = \frac{1}{\sqrt{|x|}}\cdot 1_{[-\frac{1}{2},\frac{1}{2}[}(x)$. Then, the function \[
f(x) = \sum_{k\in\Z} \phi(x+k)
\]
is an element of $\se(\R)$, but $f^2$ is not locally integrable. Hence, $f^2\notin\se(\R)$. However, if one of the two functions is bounded, their product is Stepanov almost periodic.

\begin{proposition} \label{prop:prod_stap}
Let $f,g\in\s(G)$, and let $f$ be bounded. Then, $f\cdot g\in\s(G)$.
\end{proposition}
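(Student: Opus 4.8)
The plan is to reduce to the case of two bounded factors and then produce a relatively dense set of common almost periods. Throughout I will use the elementary bound $\|hu\|_{S^p}\leqslant\|h\|_{\infty}\,\|u\|_{S^p}$, valid for bounded $h$ and $u\in L_{\text{loc}}^p(G)$, which follows by pulling $\|h\|_{\infty}$ out of the local $L^p$-integral defining $\|\cdot\|_{S^p}$. First note that $fg\in BS_K^p(G)$, since $\|fg\|_{S^p}\leqslant\|f\|_{\infty}\,\|g\|_{S^p}<\infty$ by Lemma~\ref{lem:2}(a). Given $\eps>0$, I would use Lemma~\ref{lem:bounded_stap} to pick a bounded $b\in\s(G)$ with $\|g-b\|_{S^p}<\eps/(\|f\|_{\infty}+1)$. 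Writing $fg=fb+f(g-b)$, the second summand satisfies $\|f(g-b)\|_{S^p}\leqslant\|f\|_{\infty}\,\|g-b\|_{S^p}<\eps$. Hence, once the bounded case yields $fb\in\s(G)$, the element $fg$ lies within $\eps$ of $\s(G)$ for every $\eps>0$; since $\s(G)$ is a closed subspace of the Banach space $BS_K^p(G)$, this forces $fg\in\s(G)$.

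For the bounded case, assume both $f$ and $g$ are bounded. From the decomposition $fg-T_t(fg)=f\,(g-T_tg)+(f-T_tf)\,T_tg$ together with $\|T_tg\|_{\infty}=\|g\|_{\infty}$, I obtain
\[
\|fg-T_t(fg)\|_{S^p}\leqslant\|f\|_{\infty}\,\|g-T_tg\|_{S^p}+\|g\|_{\infty}\,\|f-T_tf\|_{S^p}\,.
\]
Thus every $t$ that is simultaneously a $\delta$-almost period of $f$ and of $g$, where $\delta:=\eps/(\|f\|_{\infty}+\|g\|_{\infty})$, is an $\eps$-almost period of $fg$. It therefore remains to show that $f$ and $g$ admit a relatively dense set of common $\delta$-almost periods, and for this I would invoke Proposition~\ref{prop:main2}: the orbits $\{T_tf\ |\ t\in G\}$ and $\{T_tg\ |\ t\in G\}$ are precompact, so the joint orbit $\{(T_tf,T_tg)\ |\ t\in G\}$ is precompact in $\s(G)\times\s(G)$ with the maximum metric, being contained in a product of two compact sets. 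Covering it by finitely many balls of radius $\delta$ and choosing representatives $(T_{t_j}f,T_{t_j}g)$, the argument of the implication (2)$\implies$(1) in Proposition~\ref{prop:main2} carries over verbatim to the pair: every ball of radius $2\max_j d(t_j,0)$ contains some $t$ with $\|f-T_tf\|_{S^p}<\delta$ and $\|g-T_tg\|_{S^p}<\delta$ simultaneously. This is exactly the required relatively dense common set.

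The main obstacle is the unboundedness of $g$ in the general case: the cross term $(f-T_tf)\,T_tg$ cannot be controlled without a sup-bound on $g$, which is precisely what forces the truncation step through Lemma~\ref{lem:bounded_stap} combined with the closedness of $\s(G)$. The other delicate point is the passage from the individual precompact orbits to a relatively dense set of \emph{common} almost periods; I expect this to be the technical heart of the proof, handled cleanly via precompactness of the joint orbit in the product space.
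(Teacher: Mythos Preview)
Your argument is correct, but it takes a genuinely different route from the paper. The paper bypasses both the truncation step and the common--almost--period argument entirely by working through the trigonometric polynomial characterisation (Proposition~\ref{prop:main1}): given $\eps>0$, pick a trigonometric polynomial $Q$ with $\|g-Q\|_{S^p}<\eps/(2\|f\|_\infty)$; since $Q$ is bounded, pick a trigonometric polynomial $P$ with $\|f-P\|_{S^p}<\eps/(2\|Q\|_\infty)$; then $\|fg-PQ\|_{S^p}\leqslant\|f\|_\infty\|g-Q\|_{S^p}+\|Q\|_\infty\|f-P\|_{S^p}<\eps$, and $PQ$ is again a trigonometric polynomial. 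This is shorter and does not need Lemma~\ref{lem:bounded_stap}, the closedness of $\s(G)$, or any product--space precompactness argument. Your approach, by contrast, stays closer to the original $\eps$-almost-period definition and gives an explicit relatively dense set of almost periods for $fg$, at the cost of invoking more of the surrounding machinery; it would also work in settings where one has Bochner-type precompactness but no convenient dense set of ``nice'' functions to play the role of trigonometric polynomials. One minor imprecision: covering the joint orbit by balls of radius $\delta$ and picking orbit representatives yields, via the triangle inequality, common $2\delta$-almost periods rather than $\delta$-almost periods, so you should start from $\delta/2$-balls.
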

\begin{proof}
Let $\eps>0$. First, there is a trigonometric polynomial $Q$ such that 
\[
\| g-Q\|_{S^p} < \frac{\eps}{2\, \|f\|_{\infty}} \,.
\] 
Note that we can assume that $\|f\|_{\infty}>0$ (otherwise $f(x)=0$ almost everywhere and the statement is trivially true). As $Q$ is a trigonometric polynomial, it is bounded. Hence, there is a trigonometric polynomial $P$ such that
\[
\|f-P\|_{S^p} < \frac{\eps}{2\, \|Q\|_{\infty}} \,.
\]
Therefore, we obtain
\[
\|fg-PQ\|_{S^p} \leqslant \|fg-fQ\|_{S^p} + \|fQ-PQ\|_{S^p} \leqslant \|f\|_{\infty}\, \|g-Q\|_{S^p} + \|Q\|_{\infty}\, \|f-P\|_{S^p} <\eps \,. 
\]
Since $PQ$ is a trigonometric polynomial, the claim follows.
\end{proof}

If we want the product of two Stepanov almost periodic functions to be an element of $\se(G)$, we can apply H\"olders theorem.

\begin{proposition} \label{prop:holder_s}
Let $f\in\s(G)$, $g\in S^q(G)$ such that $1=\frac{1}{p}+\frac{1}{q}$. Then, $fg\in\se(G)$.
\end{proposition}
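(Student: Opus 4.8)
The plan is to deduce the statement from the trigonometric-polynomial characterisation of Proposition~\ref{prop:main1}, using a pointwise H\"older inequality as the bridge between the three Stepanov norms. The starting point is the observation that for every $y\in G$, applying H\"older's inequality on $y+K$ with respect to $\theta_G$ and using $\frac1p+\frac1q=1$ (so that $\theta_G(K)=\theta_G(K)^{1/p}\,\theta_G(K)^{1/q}$) gives
\[
\frac{1}{\theta_G(K)}\int_{y+K}|f(x)g(x)|\ \dd\theta_G(x)
\leqslant \Big(\tfrac{1}{\theta_G(K)}\int_{y+K}|f|^p\,\dd\theta_G\Big)^{1/p}
         \Big(\tfrac{1}{\theta_G(K)}\int_{y+K}|g|^q\,\dd\theta_G\Big)^{1/q}.
\]
Taking the supremum over $y\in G$ yields the key estimate $\|fg\|_{\se}\leqslant\|f\|_{\s}\,\|g\|_{S^q}$. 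In particular, since $f\in L_{\text{loc}}^p(G)$ and $g\in L_{\text{loc}}^q(G)$, the same local computation shows $fg\in L_{\text{loc}}^1(G)$, so that the characterisation of $\se(G)$ from Proposition~\ref{prop:main1} is indeed applicable to $fg$.

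Next I would approximate $f$ and $g$ separately, being careful about the \emph{order} of the two approximations. Fix $\eps>0$; we may assume $\|g\|_{S^q}>0$, as otherwise $g=0$ almost everywhere and $fg=0\in\se(G)$ trivially. By Proposition~\ref{prop:main1} applied to $f\in\s(G)$, choose a trigonometric polynomial $P$ with $\|f-P\|_{\s}<\frac{\eps}{2\,\|g\|_{S^q}}$, and then \emph{freeze} its norm by setting $C:=\|P\|_{\s}+1$; note $C<\infty$ since trigonometric polynomials are bounded, so that $\|P\|_{\s}\leqslant\|P\|_{\infty}<\infty$. Only afterwards do I invoke Proposition~\ref{prop:main1} (with exponent $q$) for $g\in S^q(G)$ to pick a trigonometric polynomial $Q$ with $\|g-Q\|_{S^q}<\frac{\eps}{2C}$.

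Combining these choices with the H\"older estimate and the splitting $fg-PQ=(f-P)g+P(g-Q)$ gives
\[
\|fg-PQ\|_{\se}\leqslant\|f-P\|_{\s}\,\|g\|_{S^q}+\|P\|_{\s}\,\|g-Q\|_{S^q}
<\tfrac{\eps}{2}+\tfrac{\eps}{2}=\eps.
\]
Since $PQ$ is again a trigonometric polynomial, this exhibits $fg$ as a $\|\cdot\|_{\se}$-limit of trigonometric polynomials, whence $fg\in\se(G)$ by Proposition~\ref{prop:main1}. I expect no serious obstacle here: the genuinely essential ingredient is the localised H\"older inequality of the first paragraph, which is precisely where the conjugacy $\frac1p+\frac1q=1$ enters. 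The only point requiring care is the order of the approximations—one must fix $P$ first in order to turn $\|P\|_{\s}$ into a constant before selecting $Q$, since approximating both functions simultaneously would leave that factor varying with $\eps$ and spoil the final bound. (One could instead try to work directly with a common relatively dense set of $\eps$-almost periods of $f$ and $g$, but establishing relative density of such a common set is exactly what the trigonometric-polynomial route lets us bypass.)
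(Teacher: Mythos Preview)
Your argument is correct and is essentially the natural fleshing-out of the paper's one-line proof (``This is an immediate consequence of H\"older's inequality''), following exactly the pattern the paper uses for Proposition~\ref{prop:prod_stap}: approximate $f$ and $g$ by trigonometric polynomials and control $\|fg-PQ\|_{\se}$ via the splitting $(f-P)g+P(g-Q)$, replacing the $L^\infty$ bound there by the localised H\"older estimate $\|uv\|_{\se}\leqslant\|u\|_{\s}\|v\|_{S^q}$. Your remark about fixing $P$ before choosing $Q$ is the same bookkeeping the paper does in Proposition~\ref{prop:prod_stap}, so there is no methodological difference.
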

\begin{proof}
This is an immediate consequence of H\"olders inequality.
\end{proof}

\section{Weyl almost periodic functions}

After establishing basic properties of Stepanov almost periodic functions in the previous section, we will now focus on Weyl almost periodic functions. The main difference is that we don't consider a fixed compact set $K$ but a specific  sequence of sets - a so called van Hove sequence. 

\begin{definition} 
A sequence $(A_n)_{n\in\N}$ of precompact open subsets of $G$ is called a \textit{van Hove sequence} if, for each compact set $K \subseteq G$, we have
\[
\lim_{n\to\infty} \frac{|\partial^{K} A_{n}|}{|A_{n}|}  =  0 \, ,
\]
where the \textit{$K$-boundary $\partial^{K} A$} of an open set $A$ is defined as
\[
\partial^{K} A := \bigl( \overline{A+K}\setminus A\bigr) \cup
\bigl((\left(G \backslash A\right) - K)\cap \overline{A}\, \bigr) \,.
\]
\end{definition}

Note that every $\sigma$-compact locally compact Abelian group $G$ admits the construction of van Hove sequences \cite{Martin2}.

Let us proceed with the definition of Weyl almost periodic functions.

\begin{definition}
Let $(A_n)_{n\in\N}$ be a van Hove sequence. A function $f\in L_{\text{loc}}^p(G)$ is called \emph{Weyl $p$-almost periodic} (with respect to $(A_n)_{n\in\N}$) if there is a sequence of trigonometric polynomials $(p_m)_{m\in\N}$ such that
\[
\lim_{m\to\infty} \mathcal{M}_{W^p,\mathcal{A}}(f-p_m) := \lim_{m\to\infty} \limsup_{n\to\infty} \, \sup_{y\in G}\left( \frac{1}{\theta_G(A_n)} \int_{y+A_n} |f(x)-p_m(x)|^p\ \dd \theta_G(x) \right)^{\frac{1}{p}} = 0  \,.
\]
We denote the space of Weyl almost periodic functions by $\w(G)$.
\end{definition}

\begin{remark}
A function $f\in L_{\text{loc}}^p(G)$ is Weyl $p$-almost periodic (with respect to $(A_n)_{n\in\N}$) if and only if, for each $\eps>0$, there is a trigonometric polynomial $P$ such that
\[
\limsup_{n\to\infty} \sup_{y\in G} \left(\frac{1}{\theta_G(A_n)} \int_{y+A_n} |f(x)-P(x)|^p\ \dd \theta_G(x)\right)^{\frac{1}{p}} < \eps \,.
\]  
\end{remark}

We will see soon that we can replace `$\limsup_{n\to\infty} \, \sup_{y\in G}$' simply by `$\lim_{n\to\infty}$' in the above definition. To see this, we will first establish that every $f\in\w(G)$ is amenable.  

\begin{proposition} \label{prop:amenable}
Let $(A_n)_{n\in\N}$ be a van Hove sequence. Then, every $f\in\w(G)$ is amenable (with respect to $(A_n)_{n\in\N}$), i.e. the limit
\[
\lim_{n\to\infty}\, \frac{1}{\theta_G(A_{n})} \int_{y+A_{n}} f(x)\ \dd \theta_G(x)
\]
exists uniformly in $y\in G$.
\end{proposition}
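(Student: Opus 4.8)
The plan is to reduce the claim to the corresponding statement for trigonometric polynomials and then pass to the limit using the defining approximation in the Weyl seminorm. First I would fix $f \in \w(G)$ and, given $\eps > 0$, choose a trigonometric polynomial $P$ such that $\limsup_{n\to\infty} \sup_{y\in G}\big(\theta_G(A_n)^{-1}\int_{y+A_n}|f-P|^p\,\dd\theta_G\big)^{1/p} < \eps$, which is possible by the remark characterising membership in $\w(G)$. By Jensen's inequality (the $p=1$ estimate being dominated by the $p$-version, as in Lemma~\ref{lem:2}(c)), this controls the $L^1$-average of $f-P$ over the sets $y+A_n$ as well, so that
\[
\limsup_{n\to\infty}\,\sup_{y\in G}\,\frac{1}{\theta_G(A_n)}\int_{y+A_n}|f(x)-P(x)|\ \dd\theta_G(x) < \eps \,.
\]
The key point is that this bound is uniform in $y$, which is exactly what will let me upgrade a pointwise-in-$y$ convergence statement for $P$ to a uniform one for $f$.

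The core step is to verify amenability for a single character, and hence for any trigonometric polynomial. For a character $\chi$ one computes $\theta_G(A_n)^{-1}\int_{y+A_n}\chi(x)\,\dd\theta_G(x) = \chi(y)\cdot\theta_G(A_n)^{-1}\int_{A_n}\chi(x)\,\dd\theta_G(x)$, so it suffices to show that $c_n(\chi):=\theta_G(A_n)^{-1}\int_{A_n}\chi\,\dd\theta_G$ converges; the uniformity in $y$ is then automatic since $|\chi(y)|=1$. For the trivial character this is $1$; for a nontrivial character, the van Hove property forces $c_n(\chi)\to 0$, because translating $A_n$ by a group element $s$ with $\chi(s)\neq 1$ changes the integral by a factor $\chi(s)$ while the van Hove boundary condition makes the integrals over $A_n$ and $s+A_n$ asymptotically equal relative to $\theta_G(A_n)$; this forces the limit to be a fixed point of multiplication by $\chi(s)$, hence $0$. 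By linearity, $\lim_{n\to\infty}\theta_G(A_n)^{-1}\int_{y+A_n}P(x)\,\dd\theta_G(x)$ exists uniformly in $y$ for every trigonometric polynomial $P$.

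Finally I would combine the two ingredients by a standard $3\eps$ argument showing the averages of $f$ form a uniform Cauchy net. Writing $I_n(y,h):=\theta_G(A_n)^{-1}\int_{y+A_n}h(x)\,\dd\theta_G(x)$, one estimates
\[
|I_n(y,f)-I_m(y,f)| \leqslant |I_n(y,f-P)| + |I_n(y,P)-I_m(y,P)| + |I_m(y,P-f)|\,,
\]
where the outer two terms are each below $\eps$ for $n,m$ large (uniformly in $y$, by the displayed $L^1$-average bound), and the middle term is below $\eps$ for $n,m$ large (uniformly in $y$) by the trigonometric-polynomial case. Hence $\sup_{y\in G}|I_n(y,f)-I_m(y,f)| < 3\eps$ for all large $n,m$, so $(I_n(\cdot,f))_{n}$ is uniformly Cauchy in $y$; since $\C$ is complete, the limit exists uniformly in $y\in G$, which is the assertion. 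The main obstacle I anticipate is the character computation: one must extract uniform-in-$y$ decay of $c_n(\chi)$ for nontrivial $\chi$ purely from the van Hove condition, and care is needed to bound $|\int_{A_n}\chi - \int_{s+A_n}\chi|$ by the measure of the symmetric difference $A_n \triangle (s+A_n)$, which is controlled by $\theta_G(\partial^{\{-s,s\}}A_n)$ and therefore negligible relative to $\theta_G(A_n)$.
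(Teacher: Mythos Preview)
Your proposal is correct and follows essentially the same strategy as the paper: approximate $f$ by a trigonometric polynomial $P$, invoke amenability of $P$, control the remainder via Jensen's inequality, and conclude by a uniform Cauchy argument. The only difference is that the paper simply cites the fact that Bohr almost periodic functions (hence trigonometric polynomials) are amenable, whereas you supply an explicit computation of $\theta_G(A_n)^{-1}\int_{A_n}\chi$ from the van Hove condition; this makes your argument more self-contained but does not change the architecture of the proof.
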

\begin{proof}
Fix $\eps>0$. Since $f\in\w(G)$, we can write
\[
f(x) = P(x) + r(x) \,,
\]
where $P$ is a trigonometric polynomial and $r$ satisfies
\[
\limsup_{n\to\infty} \, \sup_{y\in G}\, \left( \frac{1}{\theta_G(A_n)} \int_{y+A_n} |r(x)|^p\ \dd \theta_G(x) \right)^{\frac{1}{p}} < \frac{\eps}{4} \,.
\] 
Every trigonometric polynomial is Bohr almost periodic and (hence) amenable. Therefore, there are numbers $N\in\N$ and $M(P)\in\R$ such that, by Jensens inequality, 
\begin{align*}
\left| \frac{1}{\theta_G(A_n)} \int_{y+A_n} f(x)\ \dd \theta_G(x) - M(P) \right|
    &\leqslant \left| \frac{1}{\theta_G(A_n)} \int_{y+A_n} P(x)\ \dd 
       \theta_G(x) - M(P) \right|  \\
    &\phantom{========} + \frac{1}{\theta_G(A_n)} \int_{y+A_n} |r(x)|\ \dd
       \theta_G(x)    \\
    &\leqslant \left| \frac{1}{\theta_G(A_n)} \int_{y+A_n} P(x)\ \dd 
       \theta_G(x) - M(P) \right|  \\
    &\phantom{========} + \left( \frac{1}{\theta_G(A_n)}  \int_{y+A_n} |r(x)|^p\ 
      \dd \theta_G(x)\right)^{\frac{1}{p}}   \\
    &< \frac{\eps}{4} + \frac{\eps}{4} \, =\,  \frac{\eps}{2}    
\end{align*}
for all $n\geqslant N$, independently of $y$. Consequently, we obtain
\[
\left| \frac{1}{\theta_G(A_{n'})} \int_{y+A_{n'}} f(x)\ \dd \theta_G(x) - \frac{1}{\theta_G(A_{n''})} \int_{y+A_{n''}} f(x)\ \dd \theta_G(x)   \right|
\, < \, \eps
\]
for all $n',n''\geqslant N$, independently of $y$. This finishes the proof.
\end{proof}

\begin{proposition}
Let $(A_n)_{n\in\N}$ be a van Hove sequence, and let $f,g\in \w(G)$. Then, 
\[
f\pm g,\, |f|,\, cf,\, \chi f\in \w(G)
\]
for all $c\in\C$ and $\chi\in \widehat{G}$.
\end{proposition}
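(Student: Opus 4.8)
The plan is to exploit that the Weyl functional $\mathcal{M}_{W^p,\mathcal{A}}$ behaves like a seminorm on $L_{\text{loc}}^p(G)$. Indeed, Minkowski's inequality applied to the normalised measure $\tfrac{1}{\theta_G(A_n)}\theta_G$ on each set $y+A_n$, followed by taking $\sup_{y}$ and $\limsup_{n}$ (both of which are subadditive), yields the triangle inequality $\mathcal{M}_{W^p,\mathcal{A}}(h_1+h_2)\leqslant \mathcal{M}_{W^p,\mathcal{A}}(h_1)+\mathcal{M}_{W^p,\mathcal{A}}(h_2)$, while homogeneity $\mathcal{M}_{W^p,\mathcal{A}}(ch)=|c|\,\mathcal{M}_{W^p,\mathcal{A}}(h)$ is immediate from $|ch|^p=|c|^p|h|^p$. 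Since $f,g\in\w(G)$, I would fix approximating sequences of trigonometric polynomials $(p_m)_{m\in\N}$ and $(q_m)_{m\in\N}$ with $\mathcal{M}_{W^p,\mathcal{A}}(f-p_m)\to 0$ and $\mathcal{M}_{W^p,\mathcal{A}}(g-q_m)\to 0$.

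The linear cases are then routine. As $p_m\pm q_m$ and $cp_m$ are again trigonometric polynomials, the triangle inequality gives $\mathcal{M}_{W^p,\mathcal{A}}\big((f\pm g)-(p_m\pm q_m)\big)\leqslant \mathcal{M}_{W^p,\mathcal{A}}(f-p_m)+\mathcal{M}_{W^p,\mathcal{A}}(g-q_m)\to 0$, and homogeneity gives $\mathcal{M}_{W^p,\mathcal{A}}(cf-cp_m)=|c|\,\mathcal{M}_{W^p,\mathcal{A}}(f-p_m)\to 0$; hence $f\pm g,\,cf\in\w(G)$. For $\chi f$ with $\chi\in\widehat{G}$, the key point is that multiplication by a character sends trigonometric polynomials to trigonometric polynomials, because the product of $\chi$ with a finite linear combination of characters is again such a combination; so $\chi p_m$ is admissible. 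Since $|\chi(x)|=1$ for all $x\in G$, the integrands are unchanged, giving $\mathcal{M}_{W^p,\mathcal{A}}(\chi f-\chi p_m)=\mathcal{M}_{W^p,\mathcal{A}}\big(\chi(f-p_m)\big)=\mathcal{M}_{W^p,\mathcal{A}}(f-p_m)\to 0$, whence $\chi f\in\w(G)$.

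The hard part will be $|f|$, because $|p_m|$ is in general not a trigonometric polynomial, so one cannot approximate $|f|$ directly by $(|p_m|)_{m\in\N}$. My plan is a two-step approximation. First, the reverse triangle inequality $\big|\,|f(x)|-|p_m(x)|\,\big|\leqslant |f(x)-p_m(x)|$, inserted under the integral, yields $\mathcal{M}_{W^p,\mathcal{A}}(|f|-|p_m|)\leqslant \mathcal{M}_{W^p,\mathcal{A}}(f-p_m)\to 0$, so $|f|$ is a Weyl limit of the functions $|p_m|$. Second, each $p_m$ is Bohr almost periodic, and the modulus of a Bohr almost periodic function is again Bohr almost periodic (the modulus is continuous and Bohr almost periodicity is preserved under composition with continuous maps, equivalently the orbit closure remains compact); hence $|p_m|$ is a uniform limit of trigonometric polynomials. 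Choosing a trigonometric polynomial $P_m$ with $\|\,|p_m|-P_m\|_\infty<\tfrac{1}{m}$ and using the bound $\mathcal{M}_{W^p,\mathcal{A}}(h)\leqslant\|h\|_\infty$, the estimate
\[
\mathcal{M}_{W^p,\mathcal{A}}(|f|-P_m)\leqslant \mathcal{M}_{W^p,\mathcal{A}}(|f|-|p_m|)+\|\,|p_m|-P_m\|_\infty
\]
shows that $P_m\to|f|$ in the Weyl functional, so $|f|\in\w(G)$. The only nontrivial input is the stability of Bohr almost periodicity under taking the modulus, which is a standard fact for uniformly almost periodic functions on locally compact Abelian groups.
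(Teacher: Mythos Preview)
Your proposal is correct and follows essentially the same route as the paper: the linear cases are dismissed as routine, and for $|f|$ the paper likewise uses the reverse triangle inequality to pass from $f-P$ to $|f|-|P|$, then invokes Bohr almost periodicity of $|P|$ to approximate it uniformly by a trigonometric polynomial $Q$. The only difference is cosmetic---you phrase it with sequences $(p_m)$ and $1/m$ bounds, while the paper fixes $\eps$ and splits it into two $\eps/2$ halves.
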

\begin{proof}
This is not obvious only for $|f|$: 

Let $\eps>0$. Since $f\in \w(G)$, there is a trigonometric polynomial $P$ such that
\[
\limsup_{n\to\infty} \sup_{y\in G} \left( \frac{1}{\theta_G(A_n)} \int_{y+A_n} 
|f(x)-P(x)|^p\ \dd x  \right)^{\frac{1}{p}} < \frac{\eps}{2} \,.
\]
Note that $|P|$ is Bohr almost periodic, and hence there is a trigonometric polynomial $Q$ such that
\[
\||P|-Q\|_{\infty} < \frac{\eps}{2} \,.
\]
Therefore, we obtain
\begin{align*}
\mathcal{M}_{W^p,\mathcal{A}}(|f|-Q)
    &\leqslant \limsup_{n\to\infty} \sup_{y\in G} \left( \frac{1}{\theta_G(A_n)}
       \int_{y+A_n} ||f(x)|-|P(x)||^p\ \dd x  \right)^{\frac{1}{p}}  \\
    &\phantom{===}+ \limsup_{n\to\infty} \sup_{y\in G} \left( 
       \frac{1}{\theta_G(A_n)} \int_{y+A_n} ||P(x)|-Q(x)|^p\ 
        \dd x  \right)^{\frac{1}{p}}  \\
    &\leqslant \limsup_{n\to\infty} \sup_{y\in G} \left( \frac{1}{\theta_G(A_n)}
       \int_{y+A_n} |f(x)-P(x)|^p\ \dd x  \right)^{\frac{1}{p}} 
       + \||P|-Q\|_{\infty}  \\
    &< \frac{\eps}{2} + \frac{\eps}{2} = \eps \,.  
\end{align*}
\end{proof}

\begin{corollary}  \label{coro:amenable}
Let $f\in\w(G)$, and let $\chi\in\widehat{G}$ be a character. Then, $\overline{\chi}\, f$ is amenable. 
\end{corollary}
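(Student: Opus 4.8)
The plan is to reduce the statement directly to the two results immediately preceding it. The crucial elementary observation is that $\overline{\chi}$ is again a character of $G$: since every character takes values in the unit circle, we have $\overline{\chi(x)} = \chi(x)^{-1} = \chi(-x)$ for all $x\in G$, and the map $x\mapsto \chi(-x)$ is a continuous homomorphism from $G$ into the circle group. Hence $\overline{\chi}\in\widehat{G}$.

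With this in hand, I would first apply the preceding proposition, which asserts that $\w(G)$ is closed under multiplication by characters. Taking $\overline{\chi}\in\widehat{G}$ together with $f\in\w(G)$, this yields $\overline{\chi}\, f\in\w(G)$.

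Finally, I would invoke Proposition~\ref{prop:amenable}, according to which every Weyl $p$-almost periodic function is amenable with respect to the given van Hove sequence $(A_n)_{n\in\N}$. Applied to $\overline{\chi}\, f\in\w(G)$, this gives precisely that the limit
\[
\lim_{n\to\infty}\, \frac{1}{\theta_G(A_n)} \int_{y+A_n} \overline{\chi(x)}\, f(x)\ \dd \theta_G(x)
\]
exists uniformly in $y\in G$, which is the assertion of the corollary.

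Since both ingredients are already established, there is no genuine obstacle here. The only point requiring (routine) justification is that complex conjugation maps $\widehat{G}$ into itself; the remainder is a two-step chain of invocations of the preceding proposition and Proposition~\ref{prop:amenable}.
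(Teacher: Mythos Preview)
Your proof is correct and follows exactly the route the paper takes: it simply says the corollary ``is a consequence of the previous two propositions,'' namely closure of $\w(G)$ under multiplication by characters and Proposition~\ref{prop:amenable}. Your extra line verifying that $\overline{\chi}\in\widehat{G}$ just makes explicit the one small point the paper leaves implicit.
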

\begin{proof}
This is a consequence of the previous two propositions.
\end{proof}

\begin{corollary}  \label{coro:weyl_indep}
Let $f\in L_{\text{loc}}^p(G)$. Then, the function $f$ is Weyl $p$-almost periodic if and only if there is a sequence of trigonometric polynomials $(p_m)_{m\in\N}$ such that
\begin{equation} \label{eq:limlim}
\lim_{m\to\infty} \lim_{n\to\infty} \left( \sup_{y\in G}\frac{1}{\theta_G(A_n)} \int_{y+A_n} |f(x)-p_m(x)|^p\ \dd \theta_G(x)\right)^{\frac{1}{p}} = 0  \,.
\end{equation}
\end{corollary}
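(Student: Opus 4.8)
The plan is to observe that the condition in \eqref{eq:limlim} differs from the definition of $\w(G)$ only in that the outer $\limsup_{n\to\infty}$ is replaced by a genuine $\lim_{n\to\infty}$; the two placements of $\sup_{y}$ are harmless, since $t\mapsto t^{1/p}$ is increasing and hence commutes with the supremum over $y$. Thus the implication ``\eqref{eq:limlim} $\Rightarrow$ Weyl almost periodic'' is immediate, because an existing limit equals the corresponding $\limsup$. The whole content lies in the converse: starting from trigonometric polynomials $(p_m)_{m\in\N}$ as in the definition of $\w(G)$, I must prove that for each fixed $m$ the inner limit in $n$ of $\|f-p_m\|_{(n)}$ exists, where for $h\in L_{\mathrm{loc}}^p(G)$ I abbreviate
\[
\|h\|_{(n)} := \Big( \sup_{y\in G} \tfrac{1}{\theta_G(A_n)} \int_{y+A_n} |h|^p\ \dd \theta_G \Big)^{\frac{1}{p}} .
\]
Once this is known, the inner limit coincides with $\limsup_{n\to\infty}\|f-p_m\|_{(n)}$, and the definition of $\w(G)$ yields $\lim_{m\to\infty}\lim_{n\to\infty}\|f-p_m\|_{(n)}=0$, which is exactly \eqref{eq:limlim}.

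To upgrade $\limsup_n$ to $\lim_n$, I would first note that each $\|\cdot\|_{(n)}$ obeys the triangle inequality (by Minkowski's inequality inside each average, followed by the supremum over $y$), so that
\[
\big| \, \|f-p_m\|_{(n)} - \|q\|_{(n)} \, \big| \leqslant \|f-p_m-q\|_{(n)}
\]
for every trigonometric polynomial $q$. The decisive input is that $\|q\|_{(n)}$ converges as $n\to\infty$: indeed $|q|^p$ is bounded and Bohr almost periodic, hence lies in $\w(G)$ and is amenable by Proposition~\ref{prop:amenable}; since the averages of $|q|^p$ converge \emph{uniformly in $y$} to its constant mean $M(|q|^p)$, the supremum over $y$ converges as well, so $\lim_{n\to\infty}\|q\|_{(n)} = M(|q|^p)^{1/p}$ exists. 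Taking $\limsup_n$ and $\liminf_n$ in the displayed inequality and cancelling the convergent term $\|q\|_{(n)}$ then gives
\[
\limsup_{n\to\infty}\|f-p_m\|_{(n)} - \liminf_{n\to\infty}\|f-p_m\|_{(n)} \leqslant 2\,\limsup_{n\to\infty}\|f-p_m-q\|_{(n)} = 2\,\mathcal{M}_{W^p,\mathcal{A}}(f-p_m-q) .
\]
Choosing $q=p_{m'}-p_m$ with $m'$ large, the right-hand side becomes $2\,\mathcal{M}_{W^p,\mathcal{A}}(f-p_{m'})$, which tends to $0$ as $m'\to\infty$ because $f\in\w(G)$; hence the gap between $\limsup_n$ and $\liminf_n$ vanishes and $\lim_{n\to\infty}\|f-p_m\|_{(n)}$ exists.

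The only real obstacle is this middle step, and within it the crucial point is the convergence of $\|q\|_{(n)}$ for a trigonometric polynomial $q$: this is where the amenability of Bohr almost periodic functions (Proposition~\ref{prop:amenable}), together with the uniformity in $y$ of the averaging limit, is genuinely used, and it is precisely that uniformity which lets the extra supremum over $y$ pass to the limit. Everything else --- the reverse triangle inequality, the cancellation of the convergent term, and the final assembly of the iterated limit --- is routine, as is recording the trivial direction and the commuting of $\sup_y$ with the $p$-th root at the outset, so that the two formulations are seen to coincide apart from the $\limsup$-versus-$\lim$ issue.
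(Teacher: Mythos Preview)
Your argument is correct. Both directions are handled properly, and the key step --- showing that $\lim_{n\to\infty}\|f-p_m\|_{(n)}$ exists for each fixed $m$ --- is carried out cleanly via the reverse triangle inequality, the convergence of $\|q\|_{(n)}$ for trigonometric polynomials $q$ (which you correctly justify through the amenability of $|q|^p$ with uniform-in-$y$ convergence from Proposition~\ref{prop:amenable}), and the choice $q=p_{m'}-p_m$.

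The paper reaches the same conclusion by a slightly different route: it observes that $|f-P|$ is itself Weyl $p$-almost periodic (using closure of $\w(G)$ under sums and absolute values) and then invokes Proposition~\ref{prop:amenable} directly for $|f-P|$, so that the inner limit is inherited from amenability. Your approach avoids appealing to $|f-P|\in\w(G)$ and instead applies amenability only to trigonometric polynomials, where it is immediate; the existence of the limit for $\|f-p_m\|_{(n)}$ is then obtained by a Cauchy-type sandwich argument. The paper's proof is shorter but leans on the earlier closure properties; your version is more self-contained and, in particular, makes transparent why the supremum over $y$ passes through the limit (via the uniformity in Proposition~\ref{prop:amenable}), a point the paper leaves implicit.
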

\begin{proof}
It is easy to see that $|f-P|$ is Weyl $p$-almost periodic if $P$ is a trigonometric polynomial and $f\in\w(G)$. Hence, $|f-P|$ is amenable by Proposition~\ref{prop:amenable}, and the claim follows.

The other direction is obvious.
\end{proof}

\begin{remark}
Let us define 
\[
\|f\|_{\w} := \lim_{n\to\infty}  \left( \frac{1}{\theta_G(A_n)} \int_{y+A_n} |f(x)|^p \ \dd \theta_G(x) \right)^{\frac{1}{p}} \,,
\]
for an arbitrary $y\in G$. Then, Eq.~\eqref{eq:limlim} becomes
\[
\lim_{m\to\infty} \|f-p_m\|_{\w} =0 
\] 
because $\|f \|_{\w}=\mathcal{M}_{W^p}(f)$ for all $f\in \w(G)$, see Proposition~\ref{prop:amenable} and Corollary~\ref{coro:weyl_indep}.
Note that, in contrast to $(\s(G),\|\cdot\|_{S^p})$, the pair $(\w(G),\|\cdot\|_{\w})$ is not a complete space. Moreover, $\|\cdot\|_{W^p}$ is not a norm but only a seminorm (in general) because $\|f\|_{W^p}=0$ for all $f\in L^p(\R^d)$. This leads to another difference between Stepanov and Weyl almost periodic functions. While $\|f-g\|_{\s}=0$ implies that $f$ and $g$ coincide almost everywhere, $f$ and $g$ can differ on a set of positive (or even infinite) measure when $\|f-g\|_{\w}=0$.
\end{remark}

\begin{proposition} \label{prop:w<s}
We have $\s(G)\subseteq \w(G)$. More precisely, there is a compact set $C$ such that
\[
\|f\|_{\w} \leqslant \|f\|_{S_C^p} 
\]
for every $f\in\s(G)$.
\end{proposition}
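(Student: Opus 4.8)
The plan is to derive both claims from the single estimate $\mathcal{M}_{W^p,\mathcal{A}}(g)\leqslant\|g\|_{S_C^p}$, valid for every measurable $g\colon G\to\C$ of finite Stepanov norm and for an arbitrary compact set $C$ with non-empty interior. Granting this estimate, the inclusion $\s(G)\subseteq\w(G)$ is immediate: by Proposition~\ref{prop:main1} every $f\in\s(G)$ is the $\|\cdot\|_{S_C^p}$-limit of trigonometric polynomials $(p_m)_{m\in\N}$, so applying the estimate to $g=f-p_m$ gives $\mathcal{M}_{W^p,\mathcal{A}}(f-p_m)\leqslant\|f-p_m\|_{S_C^p}\to0$, which is exactly the defining property of $\w(G)$. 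The quantitative bound then follows by taking $g=f$ and invoking the remark preceding this proposition, namely $\|f\|_{\w}=\mathcal{M}_{W^p,\mathcal{A}}(f)$ for $f\in\w(G)$.

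The core is therefore the estimate itself, and the crucial observation is that covering $y+A_n$ by translates of $C$ is too crude, since it loses a factor $\theta_G(C-C)/\theta_G(C)>1$, whereas an averaging argument recovers the sharp constant. I would start from the defining inequality $\theta_G(C)^{-1}\int_{z+C}|g|^p\,\dd\theta_G\leqslant\|g\|_{S_C^p}^p$, average it over $z\in y+A_n$, and interchange the order of integration. Substituting $w=z+c$ with $c\in C$ and using Fubini, this turns into
\[
\frac{1}{\theta_G(C)}\int_{C}\frac{1}{\theta_G(A_n)}\int_{y+c+A_n}|g(w)|^p\,\dd\theta_G(w)\,\dd\theta_G(c)\;\leqslant\;\|g\|_{S_C^p}^p\,,
\]
so that the $C$-average of the shifted van Hove means is bounded by $\|g\|_{S_C^p}^p$, uniformly in $y\in G$.

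It then remains to exchange the shifted mean $\theta_G(A_n)^{-1}\int_{y+c+A_n}|g|^p$ for the genuine mean $\theta_G(A_n)^{-1}\int_{y+A_n}|g|^p$, and this is exactly where the van Hove property enters. The two integrals differ by at most $\int_{y+(A_n\triangle(c+A_n))}|g|^p$, and a direct set-theoretic check gives $A_n\triangle(c+A_n)\subseteq\partial^{K}A_n$ for the fixed compact set $K:=\overline{C\cup(-C)}$, uniformly for $c\in C$. Covering the precompact shell $\partial^{K}A_n$ by translates of $C$ bounds this error by $\kappa\,\theta_G(\partial^{K'}A_n)\,\|g\|_{S_C^p}^p$ for a suitable larger compact set $K'$ and a constant $\kappa=\kappa(C)$; since $\theta_G(\partial^{K'}A_n)/\theta_G(A_n)\to0$, the error is $o(1)\,\|g\|_{S_C^p}^p$ as $n\to\infty$, uniformly in $y$ and $c$. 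Combining the last two steps gives $\theta_G(A_n)^{-1}\int_{y+A_n}|g|^p\leqslant\|g\|_{S_C^p}^p+o(1)$ uniformly in $y$, and taking $\sup_{y\in G}$ followed by $\limsup_{n\to\infty}$ yields the estimate.

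The step I expect to require the most care is the error analysis: one must verify that, uniformly over the compact family of shifts $c\in C$, the symmetric difference $A_n\triangle(c+A_n)$ lies in a single $K$-boundary, and that the covering constant $\kappa$ is harmless precisely because it multiplies a quantity of order $o(\theta_G(A_n))$. By contrast, the Fubini interchange, the uniformity in $y$ (which is automatic from the translation invariance of $\theta_G$), and the final reduction are routine.
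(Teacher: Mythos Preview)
Your argument is correct and genuinely more self-contained than the paper's. The paper does not prove the key inequality $\limsup_n \theta_G(A_n)^{-1}\int_{y+A_n}|f|^p\,\dd\theta_G\leqslant\|f\|_{S_C^p}^p$ directly; instead it applies \cite[Lem.~3.3]{SS3} to the translation bounded measure $\mu=|f|^p\,\theta_G$ as a black box, obtaining a compact set $C$ with $\limsup_n|\mu|(y+A_n)/\theta_G(A_n)\leqslant\|\mu\|_C/\theta_G(C)$, and then invokes amenability (Proposition~\ref{prop:amenable}) to pass from $\limsup$ to $\lim$. Your averaging-then-Fubini manoeuvre, followed by the van Hove boundary estimate, is exactly the kind of argument that lies behind such a lemma, so you are effectively reproving the cited result in the special case needed here. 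What your route buys is that the estimate visibly holds for \emph{every} compact $C$ with non-empty interior, not merely for some $C$ produced by an external reference, and the deduction of the inclusion $\s(G)\subseteq\w(G)$ via Proposition~\ref{prop:main1} is made explicit rather than left implicit.

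One small refinement: the phrase ``covering the precompact shell $\partial^{K}A_n$ by translates of $C$'' does not by itself yield a bound proportional to $\theta_G(\partial^{K'}A_n)$; a naive covering only gives (number of translates)$\times\theta_G(C)$. The clean way to get what you want is to reuse your own averaging trick once more: for any measurable $B$ and any open $V$ with $\overline{V}\subseteq C$, one has $\int_{B}|g|^p\,\dd\theta_G\leqslant\theta_G(V)^{-1}\theta_G(C)\,\theta_G(B-V)\,\|g\|_{S_C^p}^p$, and then $\partial^{K}A_n-V\subseteq\partial^{K'}A_n$ for $K'=(K\cup\{0\})+\overline{V\cup(-V)}$. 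You already flag this step as the one needing care, and with this adjustment it goes through cleanly.
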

\begin{proof}
Let $f\in\s(G)$, and let $(A_n)_{n\in\N}$ be a van Hove sequence. Consider the measure $\mu:=|f|^p\, \theta_G$. By \cite[Lem. 3.3]{SS3}, there is a compact set $C$ such that
\[
\limsup_{n\to\infty} \frac{1}{\theta_G(A_n)} \int_{y+A_n} |f(x)|^p\ \dd \theta_G(x) = \limsup_{n\to\infty} \frac{|\mu|(y+A_n)}{\theta_G(y+A_n)}
\leqslant \frac{\|\mu\|_C}{\theta_G(C)} = \|f\|_{S_C^p}^p 
\] 
for all $y\in G$.
Since every $f\in\w(G)$ is amenable by Proposition~\ref{prop:amenable}, we can replace `$\limsup$' by `$\lim$' in the above equation, and we obtain
\[
\|f\|_{\w} \leqslant \|f\|_{S_C^p} \,,
\]
which finishes the proof.
\end{proof}

\begin{corollary}
Every $f\in \s(G)$ is amenable. 
\end{corollary}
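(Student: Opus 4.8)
The plan is to chain together the two most recent results in the excerpt. The corollary asserts that every $f\in\s(G)$ is amenable, meaning the limit $\lim_{n\to\infty}\frac{1}{\theta_G(A_n)}\int_{y+A_n}f(x)\,\dd\theta_G(x)$ exists uniformly in $y$. By Proposition~\ref{prop:w<s} we have the inclusion $\s(G)\subseteq\w(G)$, and by Proposition~\ref{prop:amenable} every element of $\w(G)$ is amenable. Composing these two facts gives the result immediately.

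\begin{proof}
By Proposition~\ref{prop:w<s}, we have $\s(G)\subseteq\w(G)$. Hence, if $f\in\s(G)$, then $f\in\w(G)$, and so $f$ is amenable by Proposition~\ref{prop:amenable}.
\end{proof}

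The only subtlety worth flagging is a matter of bookkeeping rather than mathematics: amenability for elements of $\w(G)$ in Proposition~\ref{prop:amenable} is defined with respect to a fixed van Hove sequence $(A_n)_{n\in\N}$, and the inclusion in Proposition~\ref{prop:w<s} is stated for an arbitrary van Hove sequence as well. So strictly speaking the conclusion is that every $f\in\s(G)$ is amenable with respect to every van Hove sequence, which is exactly what one wants; there is no hidden dependence on a distinguished sequence, because the Stepanov norm $\|\cdot\|_{S^p}$ makes no reference to $(A_n)_{n\in\N}$ at all. I do not anticipate any genuine obstacle here — the entire content of the statement has already been packaged into the two preceding propositions, and this corollary simply records their juxtaposition.
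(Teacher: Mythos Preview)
Your proof is correct and matches the paper's own argument exactly: the paper simply states that the corollary is an immediate consequence of Propositions~\ref{prop:amenable} and~\ref{prop:w<s}.
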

\begin{proof}
This is an immediate consequence of Propositions~\ref{prop:amenable} and \ref{prop:w<s}.
\end{proof}

The next goal is to establish the analogon of Proposition~\ref{prop:main1} for Weyl $p$-almost periodic functions. Thus, we will show that Weyl almost periodicity can also be characterised by relatively dense sets of almost periods. For this, we will need the following lemmas. 

\begin{lemma} \label{lem:w^p_uc}
Let $f\in L_{\text{loc}}^p(G)$ be such that, for all $\eps'>0$, there is a relatively dense set $R$ and a number $N\in\N$ such that
\[
\|f-T_tf\|_{S_{A_n}^p} < \eps'
\]
for all $t\in R$ and all $n\geqslant N$. Then, for all $\eps>0$, there is a neighbourhood $V$ of $0$ such that 
\[
\|f-T_sf\|_{S_{A_N}^p} < \eps
\]
for all $s\in V$.
\end{lemma}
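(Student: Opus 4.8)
The plan is to imitate the proof of Lemma~\ref{lem:2}(b), with the fixed compact set $K$ there replaced by the van Hove set $A_N$, and with the almost periods now supplied by the hypothesis instead of by Stepanov almost periodicity. First I would fix $\eps>0$ and apply the hypothesis with $\eps'=\eps/3$; this yields a relatively dense set $R$ and an index $N\in\N$ such that $\|f-T_tf\|_{S_{A_N}^p}<\eps/3$ for every $t\in R$ (only the case $n=N$ is needed, since the conclusion concerns $A_N$). Because $R$ is relatively dense, there is a compact set $K'$ with the property that for each $y\in G$ there exists $t=t(y)\in R$ with $y-t\in K'$.

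Next, for a fixed (still to be constrained) $s$ and an arbitrary $y\in G$, I would choose such a $t\in R$ and split, via the triangle inequality for the local $L^p$-norm over $y+A_N$,
\[
f-T_sf = (f-T_tf) + (T_tf-T_{s+t}f) + (T_{s+t}f-T_sf).
\]
The first piece is bounded by $\|f-T_tf\|_{S_{A_N}^p}<\eps/3$, and the third piece is bounded by $\|T_s(T_tf-f)\|_{S_{A_N}^p}=\|f-T_tf\|_{S_{A_N}^p}<\eps/3$, using the translation invariance of $\|\cdot\|_{S_{A_N}^p}$ (which holds because the defining supremum runs over all $y\in G$). The decisive middle piece is where the van Hove set enters: after the substitution $x\mapsto x-t$, its contribution over $y+A_N$ equals the integral of $|f-T_sf|^p$ over $(y-t)+A_N\subseteq K'+A_N$, so it is dominated, \emph{uniformly in $y$}, by $\big(\theta_G(A_N)^{-1}\int_{K'+A_N}|f-T_sf|^p\,\dd\theta_G\big)^{1/p}$.

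It then remains to make this last quantity smaller than $\eps/3$. Since $K'+A_N$ is precompact, its closure is compact, and continuity of translation in $L^p$ on compact sets---the same input \cite[Thm. 1.1.5]{Rud} used in Lemma~\ref{lem:2}(b)---provides a neighbourhood $V$ of $0$ so that $s\in V$ forces the integral over $K'+A_N$ below $(\eps/3)^p$. Combining the three bounds gives $\big(\theta_G(A_N)^{-1}\int_{y+A_N}|f-T_sf|^p\,\dd\theta_G\big)^{1/p}<\eps$ for every $y$, whence $\|f-T_sf\|_{S_{A_N}^p}\leqslant\eps$; starting instead from $\eps/2$ yields the strict inequality. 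The one point that needs care---and the only place where the argument differs in spirit from Lemma~\ref{lem:2}(b)---is the uniformity in $y$ of the neighbourhood $V$. This is exactly what the reduction of the middle term to the fixed compact set $K'+A_N$ secures, since the resulting bound no longer depends on $y$.
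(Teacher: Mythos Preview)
Your proof is correct and follows essentially the same approach as the paper: apply the hypothesis with $\eps'=\eps/3$ to obtain $R$ and $N$, use relative denseness to find a compact $K'$ with $y-t(y)\in K'$, split $f-T_sf$ into three pieces, bound the outer two by $\|f-T_tf\|_{S_{A_N}^p}<\eps/3$, and reduce the middle one to an integral over the fixed compact set $K'+A_N$, which is then handled by continuity of translation in $L^p$ via \cite[Thm.~1.1.5]{Rud}. Your explicit pointwise-in-$y$ treatment of the middle term (and the remark on strict vs.\ non-strict inequality after taking the supremum) makes the uniformity clearer than the paper's display, but the argument is the same.
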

\begin{proof}
Fix $\eps>0$. By assumption, there is a relatively dense set $R$ and a number $N\in\N$ such that $\|f-T_tf\|_{S_{A_n}^p} < \frac{\eps}{3}$ for all $n\geqslant N$ and all $t\in R$. So, we can write $G=R+K$ for some compact set $K$. Due to \cite[Thm. 1.1.5]{Rud}, there is a neighbourhood $V$ of $0$ such that
\[
\left( \frac{1}{\theta_G(A_N)} 
       \int_{K+A_N} |f(x)-T_{s}f(x)|^p\ \dd \theta_G(x) 
       \right)^{\frac{1}{p}} < \frac{\eps}{3}
\]
for all $s\in V$. Consequently, one has
\begin{align*}
\|f-T_sf\|_{S_{A_N}^p}
    &\leqslant \|f-T_tf\|_{S_{A_N}^p} + \|T_tf-T_{t+s}f\|_{S_{A_N}^p} + 
      \|T_{t+s}f-T_sf\|_{S_{A_N}^p}   \\
    &< \frac{\eps}{3} + \sup_{y\in G} \left( \frac{1}{\theta_G(A_N)} 
       \int_{y+A_N} |T_tf(x)-T_{s+t}f(x)|^p\ \dd \theta_G(x) 
       \right)^{\frac{1}{p}} + \frac{\eps}{3}  \\
    &< \frac{\eps}{3} + \left( \frac{1}{\theta_G(A_N)} 
       \int_{K+A_N} |f(x)-T_{s}f(x)|^p\ \dd \theta_G(x) 
       \right)^{\frac{1}{p}} + \frac{\eps}{3}   \\
    &< \eps      
\end{align*}
for all $s\in V$, which completes the proof.
\end{proof}

\begin{lemma} \label{lem:K}
Let $f\in L_{\text{loc}}^p(G)$ be such that, for every $\eps>0$, there is a relatively dense set $R$ and a number $N\in\N$ such that 
\[
 \|f-T_tf\|_{S_{A_n}^p} < \eps 
\]
for all $n\geqslant N$ and all $t\in R$. Fix $\eps'>0$ and $N'\in\N$. Then, there is a function $\mathcal{K}:G\to\C$ with the following properties:
\begin{enumerate}
\item[$\bullet$] $\mathcal{K}\geqslant 0$,
\item[$\bullet$] $\mathcal{K}$ is bounded,
\item[$\bullet$] $\liminf_{n\to\infty} \frac{1}{\theta_G(A_n)} \int_{A_n} \mathcal{K}(z)\ \dd \theta_G(z)=1$,
\item[$\bullet$] $\mathcal{K}(z)\neq 0$ only if $\|f-T_{-z}f\|_{S_{A_{N'}}^p} < \eps'$.
\end{enumerate}
\end{lemma}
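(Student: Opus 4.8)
The plan is to take $\mathcal{K}$ to be a normalised indicator of the set of good translates, so that the whole statement reduces to a single density estimate. Put
\[
P := \{ z\in G\ |\ \|f-T_{-z}f\|_{S_{A_{N'}}^p} < \eps' \}
\]
and $\alpha := \liminf_{n\to\infty}\frac{1}{\theta_G(A_n)}\int_{A_n}\mathbbm{1}_P(z)\ \dd\theta_G(z)$. If $0<\alpha\leqslant 1$, then $\mathcal{K}:=\alpha^{-1}\mathbbm{1}_P$ satisfies all four requirements at once: it is non-negative, bounded by $\alpha^{-1}$, vanishes off $P$, and, since a positive scalar passes through the $\liminf$, it has lower average $\alpha^{-1}\alpha=1$. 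Hence everything comes down to proving that $P$ has positive lower density along $(A_n)_{n\in\N}$.

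To force mass into $P$, I would first show that the margin set $P_0:=\{z\in G\ |\ \|f-T_{-z}f\|_{S_{A_{N'}}^p}<\eps'/2\}$ is relatively dense. Applying the hypothesis with a small tolerance $\eta$ produces a relatively dense set $R$ and a threshold $N$ with $\|f-T_tf\|_{S_{A_n}^p}<\eta$ for all $t\in R$ and $n\geqslant N$; translation invariance of the Stepanov norm gives $\|f-T_{-t}f\|_{S_{A_{N'}}^p}=\|f-T_tf\|_{S_{A_{N'}}^p}$, so only the latter must be bounded. Transferring the estimate from the windows $A_n$ furnished by the hypothesis to the fixed window $A_{N'}$ is carried out with the norm equivalence of Lemma~\ref{lem:equiv_norms}; choosing $\eta$ small enough that the equivalence constant is absorbed yields $\|f-T_tf\|_{S_{A_{N'}}^p}<\eps'/2$, whence $R\subseteq P_0$.

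Next I would thicken $P_0$ to a set of positive measure. The same hypothesis makes $f$ uniformly continuous in the $\|\cdot\|_{S_{A_{N'}}^p}$ seminorm — this is Lemma~\ref{lem:w^p_uc} read at the window $A_{N'}$ — so there is a symmetric precompact neighbourhood $V$ of $0$ with $\|f-T_{-s}f\|_{S_{A_{N'}}^p}<\eps'/2$ for all $s\in V$. For $t\in P_0$ and $s\in V$, the decomposition $f-T_{-(t+s)}f=(f-T_{-t}f)+T_{-t}(f-T_{-s}f)$ together with translation invariance and the triangle inequality gives $\|f-T_{-(t+s)}f\|_{S_{A_{N'}}^p}<\eps'$, that is, $P_0+V\subseteq P$.

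Finally I would convert relative denseness of $P_0$ into positive lower density of $P$ by a packing argument. Fix a symmetric precompact $W$ with $W+W\subseteq V$ and a compact $K$ with $P_0+K=G$, and in each $A_n$ select a maximal $D_n\subseteq P_0\cap(A_n-K)$ whose translates $\{t+W\ |\ t\in D_n\}$ are pairwise disjoint. Maximality forces $A_n\subseteq D_n+V+K$, so $\#D_n\geqslant \theta_G(A_n)/\theta_G(V+K)$; on the other hand the disjoint sets $t+W$ all lie in $P\cap(A_n+(W-K))$, and $A_n+(W-K)$ exceeds $A_n$ only by a boundary term. With $C:=\overline{W-K}$ this yields
\[
\theta_G(P\cap A_n)\geqslant \frac{\theta_G(W)}{\theta_G(V+K)}\,\theta_G(A_n)-\theta_G(\partial^{C}A_n) \,,
\]
and dividing by $\theta_G(A_n)$ and invoking the van Hove property gives $\alpha\geqslant\theta_G(W)/\theta_G(V+K)>0$. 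The genuinely delicate point is the window transfer in the second and third paragraphs: the hypothesis controls only the windows $A_n$ with $n\geqslant N$, whereas the conclusion is pinned to the single window $A_{N'}$, so one must ensure that the tolerance $\eps'$ survives this change of scale. The packing step is where the van Hove property is essential; everything else is bookkeeping with translation invariance and the triangle inequality.
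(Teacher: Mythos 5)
Your overall strategy is the same as the paper's: take $\mathcal{K}$ to be a normalised indicator of the set of good translates, so that everything reduces to showing that this set has positive lower density along $(A_n)_{n\in\N}$; and that density bound is obtained, in both arguments, from relative denseness plus a thickening $P_0+V\subseteq P$ coming from $S^p$-uniform continuity. Where you diverge is the final density step: the paper covers $G$ by finitely many translates $\bigcup_{j=1}^{\ell}(A+t_j)$ and deduces $\liminf_n \theta_G(A\cap A_n)/\theta_G(A_n)\geqslant 1/\ell>0$, whereas you pack $A_n$ with disjoint translates $t+W$, $t\in D_n$, contained in $P$. Both routes invoke the van Hove property to discard a boundary term, and your packing computation ($\#D_n\geqslant\theta_G(A_n)/\theta_G(V+K)$ by maximality, then $\theta_G(P\cap A_n)\geqslant \#D_n\,\theta_G(W)-\theta_G(\partial^{C}A_n)$) is correct; it is somewhat more explicit than the paper's one-line assertion, at the cost of extra bookkeeping.

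The one step that does not work as written is the window transfer you yourself single out as delicate. You propose to pass from the windows $A_n$, $n\geqslant N(\eta)$, furnished by the hypothesis to the prescribed window $A_{N'}$ by invoking Lemma~\ref{lem:equiv_norms} and ``choosing $\eta$ small enough that the equivalence constant is absorbed.'' This is circular: the constant relating $\|\cdot\|_{S_{A_{N'}}^p}$ to $\|\cdot\|_{S_{A_{N(\eta)}}^p}$ depends on $N(\eta)$, hence on $\eta$, and there is no a priori bound on it (for a covering of $A_{N'}$ by translates of $A_{N(\eta)}$ it is of order $\bigl(\theta_G(A_{N(\eta)})/\theta_G(A_{N'})\bigr)^{1/p}$, which typically blows up as $N(\eta)\to\infty$). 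Shrinking $\eta$ may enlarge $N(\eta)$ and hence the constant, so the product need not become small. The paper avoids this entirely by proving the statement only for the window $A_N$ with $N$ the threshold furnished by the hypothesis at tolerance $\eps'/2$ — which is exactly how the lemma is used in Proposition~\ref{prop:weap_eps_char} — rather than for an arbitrary prescribed $N'$. If you adopt that reading (i.e.\ take $N'$ to be, or to dominate, the $N$ from the hypothesis), the transfer issue disappears, your step (2) only needs Lemma~\ref{lem:equiv_norms} with two \emph{fixed} windows, and the rest of your argument goes through.
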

\begin{proof}
Fix $\eps'>0$. By assumption, there is a number $N\in\N$ such that
\[
A:=\{ z\in G\ |\ \|f-T_{-z}f\|_{S_{A_N}^p} <\eps' \} 
\]
and 
\[
B:=\{ z\in G\ |\ \|f-T_{-z}f\|_{S_{A_N}^p} <\frac{\eps'}{2} \}
\]
are relatively dense. Thus, there is a compact set $K$ with $G=K+B$. Moreover, by Lemma~\ref{lem:2}, there is an open neighbourhood $V$ of $0$ such that
\[
\|f-T_{-t}f\|_{S_{A_N}^p}<\frac{\eps'}{2}  \qquad \text{ for all } t\in V\,.
\]
Note that $B+V\subseteq A$. Since $K$ is compact, it can be covered by finitely many translates of $V$, i.e. $K\subseteq \bigcup_{j=1}^{\ell} (V+t_j)$, $t_1,\ldots,t_{\ell}\in G$. Hence, one has
\[
G=B+K\subseteq B + \bigcup_{j=1}^{\ell} (V+t_j) = \bigcup_{j=1}^{\ell} ((V+B)+t_j) \subseteq \bigcup_{j=1}^{\ell} (A+t_j) \,.
\]
This implies that
\[
c:= \liminf_{n\to\infty} \frac{\theta_G(A\cap A_n)}{\theta_G(A_n)}
  =\liminf_{n\to\infty}\frac{1}{\theta_G(A_n)} \int_{A_n} 1_A(x)\ \dd\theta_G(x) >0 \,.
\]
Now, the function $\mathcal{K}(x)=\frac{1}{c} \, 1_A(x)$ satisfies the properties.
\end{proof}

\begin{proposition}  \label{prop:weap_eps_char}
Let $f\in L_{\text{loc}}^p(G)$. Then, the following statements are equivalent:
\begin{enumerate}
\item[(1)] $f$ is Weyl $p$-almost periodic.
\item[(2)] For every $\eps>0$, there is a relatively dense set $R$ and a number $N\in\N$ such that 
\[
 \|f-T_tf\|_{S_{A_n}^p} < \eps 
\]
for all $n\geqslant N$ and all $t\in R$.
\end{enumerate}
\end{proposition}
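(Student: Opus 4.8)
The overall plan is to prove both implications by transferring the strategy of Proposition~\ref{prop:main1} from the single set $K$ to the van Hove scales $A_n$, the key point being that the hypothesis in (2) controls $\|f-T_tf\|_{S_{A_n}^p}$ \emph{simultaneously} for all $n\geqslant N$. For (1)$\Rightarrow$(2) I would argue directly. Given $\eps>0$, choose a trigonometric polynomial $P$ with $\mathcal{M}_{W^p,\mathcal{A}}(f-P)<\eps/3$, so that $\|f-P\|_{S_{A_n}^p}<\eps/3$ for all $n\geqslant N$ and some $N\in\N$. Since $P$ is Bohr almost periodic, the set $R:=\{t:\|P-T_tP\|_\infty<\eps/3\}$ is relatively dense, and $\|P-T_tP\|_{S_{A_n}^p}\leqslant\|P-T_tP\|_\infty$. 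Using that the supremum over $y$ in $\|\cdot\|_{S_{A_n}^p}$ is translation invariant, the triangle inequality
\[
\|f-T_tf\|_{S_{A_n}^p}\leqslant\|f-P\|_{S_{A_n}^p}+\|P-T_tP\|_{S_{A_n}^p}+\|T_tP-T_tf\|_{S_{A_n}^p}
\]
then yields $\|f-T_tf\|_{S_{A_n}^p}<\eps$ for all $t\in R$ and all $n\geqslant N$, which is (2).

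For (2)$\Rightarrow$(1) I would mollify, imitating the proof of Proposition~\ref{prop:main1}. Put $f_\eta(x):=\theta_G(B_\eta(0))^{-1}\int_{B_\eta(0)}f(x+z)\ \dd\theta_G(z)$. A H\"older estimate over $B_\eta$ bounds $|f_\eta(x)-T_sf_\eta(x)|$ by $\|f-T_sf\|_{S_{B_\eta}^p}$ uniformly in $x$, and Lemma~\ref{lem:equiv_norms} replaces $S_{B_\eta}^p$ by $S_{A_N}^p$ up to a constant. Lemma~\ref{lem:w^p_uc} then makes this quantity small for $s$ in a neighbourhood of $0$, so $f_\eta$ is continuous; feeding in the almost periods from (2) at the single scale $n=N$ instead shows that $\{t:\|f_\eta-T_tf_\eta\|_\infty<\eps\}$ is relatively dense, so $f_\eta$ is Bohr almost periodic. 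Since Bohr almost periodic functions are uniform limits of trigonometric polynomials, the argument reduces to establishing
\[
\lim_{\eta\to0}\mathcal{M}_{W^p,\mathcal{A}}(f-f_\eta)=0,
\]
after which a trigonometric polynomial $P$ with $\|f_\eta-P\|_\infty$ small closes the proof via the seminorm triangle inequality $\mathcal{M}_{W^p,\mathcal{A}}(f-P)\leqslant\mathcal{M}_{W^p,\mathcal{A}}(f-f_\eta)+\|f_\eta-P\|_\infty$.

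This last convergence is the main obstacle. The Minkowski argument of Proposition~\ref{prop:main1}(iii) combined with Lemma~\ref{lem:w^p_uc} only delivers smallness of $\|f-f_\eta\|_{S_{A_N}^p}$ at the \emph{fixed} scale $N$, whereas $\mathcal{M}_{W^p,\mathcal{A}}(f-f_\eta)=\limsup_{n}\|f-f_\eta\|_{S_{A_n}^p}$ lives at the large scales, and smallness at one scale does not a priori control the others. To bridge this gap I would invoke Lemma~\ref{lem:K}. Writing $g:=f-f_\eta$, the good translates of $f$ at scale $N$ are, by the estimate in the previous paragraph, also sup-norm almost periods of $f_\eta$, so $g$ meets the hypothesis of Lemma~\ref{lem:K}; the lemma then supplies a relatively dense set $A$ of good translates at scale $N$ (with the associated bounded kernel $\mathcal{K}\geqslant0$ normalised so that $\liminf_n\theta_G(A_n)^{-1}\int_{A_n}\mathcal{K}=1$). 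Covering each window $y+A_n$ by translates $t_j+A_N$, using the window domination $\int_{t_j+A_N}|g|^p\leqslant\theta_G(A_N)\,\|g\|_{S_{A_N}^p}^p$ and the fact that the van Hove property forces the number of tiles to satisfy $\#\{t_j\}\,\theta_G(A_N)\leqslant(1+o(1))\,\theta_G(A_n)$, one obtains a bound $\mathcal{M}_{W^p,\mathcal{A}}(g)\leqslant C\,\|g\|_{S_{A_N}^p}$ with $C$ independent of $n$ (the normalisation of $\mathcal{K}$ serving to keep this constant sharp). Since the right-hand side tends to $0$ as $\eta\to0$, the desired convergence follows and the proof is complete.
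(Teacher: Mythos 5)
Your (1)$\Rightarrow$(2) is exactly the paper's argument and is fine. The difficulty is (2)$\Rightarrow$(1), where you mollify over small balls $B_\eta(0)$ as in Proposition~\ref{prop:main1}. Hypothesis (2) only controls $f$ at the \emph{large} scales $A_n$, $n\geqslant N(\eps)$, and $N(\eps)$ in general tends to infinity as $\eps\to0$; every transfer of that information down to a smaller window costs a factor of the form $\bigl(\theta_G(A_{N(\eps)})/\theta_G(\text{small window})\bigr)^{1/p}$, which is unbounded. This breaks your argument in two places. First, $f_\eta$ is not shown to be Bohr almost periodic: your H\"older estimate only gives $\|f_\eta-T_tf_\eta\|_\infty\leqslant c(B_\eta,A_{N(\eps)})\,\eps$ for $t$ in the relatively dense set, and $c(B_\eta,A_{N(\eps)})$ blows up as $\eps\to0$, so you never produce arbitrarily good sup-norm almost periods. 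Concretely, take $f=\sum_k g_k(\cdot-2^{2^k})$ on $\R$ with $g_k$ supported in $[0,1]$, $\|g_k\|_{L^p}=1$ and $g_k$ oscillating at frequency $k!$: any window of length $n$ meets only $O(\log\log n)$ bumps, so $\|f-T_tf\|_{S^p_{A_n}}\to0$ uniformly in $t$ and (2) holds (indeed $f$ is Weyl-null, hence Weyl almost periodic via $P=0$); yet $f_\eta$ vanishes at infinity without vanishing identically, hence is not Bohr almost periodic, and $\|f-f_\eta\|_{S^p_{A_N}}$ stays bounded away from $0$ at every fixed scale $N$. Second, your bridge $\mathcal{M}_{W^p,\mathcal{A}}(g)\leqslant C\,\|g\|_{S^p_{A_N}}$ does hold for each fixed $N$, but the tiling count $(1+o(1))\,\theta_G(A_n)/\theta_G(A_N)$ is not available for general $A_N$ in a general LCA group (a covering argument only yields $C\approx(\theta_G(A_N)/\theta_G(V_N))^{1/p}$ for a neighbourhood $V_N$ with $V_N+V_N\subseteq A_N-x_0$, which need not stay bounded in $N$), and since $N$ must grow with the target accuracy, this constant is uncontrolled; the kernel of Lemma~\ref{lem:K} plays no role in such a covering bound.

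The paper sidesteps all of this by building the Bohr approximant at the Weyl scale rather than at a small scale: it sets $\phi(x)=\liminf_{n\to\infty}\theta_G(A_n)^{-1}\int_{A_n}f(x+z)\,\mathcal{K}(z)\ \dd\theta_G(z)$ with $\mathcal{K}$ the kernel of Lemma~\ref{lem:K} supported on the set of almost periods. Then $|\phi(x)-T_t\phi(x)|\leqslant\|\mathcal{K}\|_\infty\,\|f-T_tf\|_{W^p}$, so Bohr almost periodicity of $\phi$ is inherited directly from the Weyl-seminorm almost periods that (2) supplies at every accuracy with a \emph{fixed} kernel, and the closeness of $\phi$ to $f$ is estimated by Fubini--Fatou against $\mathcal{K}$, using only that $\|f-T_{-z}f\|_{S^p_{A_N}}<\eps$ on the support of $\mathcal{K}$. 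If you want to keep an averaging-style proof, you must average translates $T_{-z}f$ over the set of $z$ where $\|f-T_{-z}f\|_{S^p_{A_N}}$ is small, not over metric balls around the origin.
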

\begin{proof}
(1)$\implies$(2): Fix $\eps>0$. Since $f\in\w(G)$, there is a trigonometric polynomial $P$ such that $\|f-P\|_{\w} < \frac{\eps}{3}$. In particular, there is a number $N\in\N$ such that
\[
\|f-P\|_{S_{A_n}^p} = \sup_{y\in G} \left( \frac{1}{\theta_G(A_n)} \int_{y+A_n} |f(x)-P(x)|^p\ \dd \theta_G(x) \right)^{\frac{1}{p}} <  \frac{\eps}{3} 
\] 
for all $n\geqslant N$, independently of $y$. This and the fact that every trigonometric polynomial is Bohr almost periodic imply that there is a relatively dense set $R$ such that
\begin{align*}
\|f-T_tf\|_{S_{A_n}^p}
    &\leqslant \|f-P\|_{S_{A_n}^p} + \|P-T_tP\|_{S_{A_n}^p} 
       + \|T_tP-T_tf\|_{S_{A_n}^p}  \\
    &< \frac{\eps}{3} + \|P-T_tP\|_{\infty} +  \frac{\eps}{3} \, < \, \eps
\end{align*}
for all $n\geqslant N$ and all $t\in R$.

\medskip

\noindent (2)$\implies$(1): Fix $\eps>0$ and $y\in G$. By assumption, there is a number $N\in\N$ and a relatively dense set $R$ such that 
\[
\|f-T_tf\|_{S_{A_n}^p} = \sup_{y\in G} \left( \frac{1}{\theta_G(A_n)} \int_{y+A_n} |f(x)-T_tf(x)|^p\ \dd \theta_G(x) \right)^{\frac{1}{p}} <  \frac{\eps}{3} 
\] 
for all $n\geqslant N$ and $t\in R$, independently of $y$.

Next, we consider a function $\mathcal{K}:G\to\C$ with the following properties:
\begin{enumerate}
\item[$\bullet$] $\mathcal{K}\geqslant 0$,
\item[$\bullet$] $\mathcal{K}$ is bounded,
\item[$\bullet$] $\liminf_{n\to\infty} \frac{1}{\theta_G(A_n)} \int_{A_n} \mathcal{K}(z)\ \dd \theta_G(z)=1$,
\item[$\bullet$] $\mathcal{K}(z)\neq 0$ only if $\|f-T_{-z}f\|_{S_{A_N}^p} < \eps$.
\end{enumerate}
Such a function exists by Lemma~\ref{lem:K}. Moreover, we define the function $\phi:G\to\C$ by
\[
\phi(x) := \liminf_{n\to\infty} \frac{1}{\theta_G(A_n)} \int_{A_n} f(x+z)\, \mathcal{K}(z)\ \dd \theta_G(z) \,.
\]

\medskip 

\noindent (i) First, we will show that $\phi$ is continuous.

To see this, only note that, by Lemma~\ref{lem:w^p_uc}, for every $\eps'>0$, there is a neighbourhood $V$ of $0$ such that
\begin{equation} \label{eq:help1}
\begin{split}
|\phi(x)-T_{-\delta}\phi(x)|
    &\leqslant \liminf_{n\to\infty} \frac{1}{\theta_G(A_n)} \int_{A_n} 
      |f(x+z) -f(x+\delta+z)|\, \mathcal{K}(z)\ \dd\theta_G(z) \\
    &\leqslant \|\mathcal{K}\|_{\infty} \, \|f-T_{-\delta}f\|_{W^p} \\
    &\leqslant c\, \|\mathcal{K}\|_{\infty}\, \|f-T_{-\delta}f\|_{S_{A_N}^p} \\
    &< \eps' \,,  
\end{split}
\end{equation} 
whenever $\delta\in V$, where we made use of Proposition~\ref{prop:w<s} and Lemma~\ref{lem:equiv_norms} in the penultimate step. Hence, $\phi$ is continuous.

\medskip

\noindent (ii) Additionally, $\phi$ is Bohr almost periodic.

This immediately follows from Eq.~\eqref{eq:help1}, since $\delta=-t$ gives
\[
|\phi(x)-T_{t}\phi(x)| \leqslant \|\mathcal{K}\|_{\infty}\, \|f-T_{t}f\|_{W^p}
\]
for all $x\in G$.

\medskip 

\noindent (iii) We have $\|f-\phi\|_{\w} < \eps$.

This is a consequence of
\begin{align*}
{}
    &\frac{1}{\theta_G(A_N)} \int_{y+A_N} |f(x)-\phi(x)| \ \dd \theta_G(x)  \\
    &\phantom{====}\leqslant \frac{1}{\theta_G(A_N)} \int_{y+A_N} 
        \liminf_{n\to\infty} 
       \frac{1}{\theta_G(A_n)} \int_{A_n} |f(x)-f(x+z)|\, \mathcal{K}(z)\ \dd 
       \theta_G(z) \ \dd \theta_G(x)   \\
    &\phantom{====}\leqslant \liminf_{n\to\infty} \frac{1}{\theta_G(A_n)}
       \int_{A_n} \frac{1}{\theta_G(A_N)} \int_{y+A_N}  |f(x)-f(x+z)|\,
        \dd \theta_G(x)\ \mathcal{K}(z) \ \dd \theta_G(z)  \\
    &\phantom{====}\leqslant \liminf_{n\to\infty} \frac{1}{\theta_G(A_n)}
       \int_{A_n}  \|f-T_{-z}f\|_{S_{A_N}^p}\ 
       \mathcal{K}(z) \ \dd \theta_G(z) \\
    &\phantom{====}< \eps \, \liminf_{n\to\infty} \frac{1}{\theta_G(A_n)}
       \int_{A_n}  \mathcal{K}(z) \ \dd \theta_G(z) \, =\, \eps \,,
\end{align*}
where we applied Fatous lemma and Fubinis theorem.

\medskip 

\noindent (iv) Finally, we can prove the claim.

This follows exactly as in the last step of the proof of Proposition~\ref{prop:main1}.
\end{proof}

\begin{remark} \label{rem:w_ap}
Let $f\in L_{\text{loc}}^p(G)$. In this case, the property

\smallskip 

\noindent (I) \textit{For every $\eps>0$, there is a relatively dense set $R$ and a number $N\in\N$ such that 
\[
 \|f-T_tf\|_{S_{A_n}^p} < \eps 
\]
for all $n\geqslant N$ and all $t\in R$.}

\smallskip

\noindent is stronger than 

\smallskip

\noindent (II) \textit{For every $\eps>0$, there is a relatively dense set $R$ such that 
\[
 \lim_{n\to\infty}\|f-T_tf\|_{S_{A_n}^p} < \eps 
\]
for all $t\in R$.}

\smallskip

\noindent 
The reason for this is that (II) is equivalent to

\smallskip

\noindent (II') For every $\eps>0$, there is a relatively dense set $R$ such that, for all $t\in R$, there is a number $N\in\N$ such that
\[
 \|f-T_tf\|_{S_{A_n}^p} < \eps 
\]
for all $n\geqslant N$. 

\smallskip

The difference is that $N$ can be chosen independently of $t$ in (I), but it will depend on $t$ in (II'). For this reason, some people prefer to call elements $f\in\w(\R)$ \textit{equi-Weyl almost periodic}.  
\end{remark}

Next, we can state the analoga of Lemma~\ref{lem:bounded_stap}, Proposition~\ref{prop:prod_stap} and Proposition~\ref{prop:holder_s}.

\begin{lemma}  \label{lem:bounded_weap}
Let $f\in\w(G)$, and let $\eps>0$. Then, there is a bounded function $b\in\w(G)$ such that
\[
\|f-b\|_{\w} < \eps\,.
\]
\end{lemma}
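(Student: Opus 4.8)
The plan is to mimic the proof of Lemma~\ref{lem:bounded_stap} by truncating $f$ at height $L$ and showing that the truncation approximates $f$ in the Weyl seminorm. Specifically, for $L\in\N$, I would define the truncated function
\[
f_L(x) =
\begin{cases}
f(x), & \text{ if } |f(x)|\leqslant L, \\
L\, \frac{f(x)}{|f(x)|}, & \text{ otherwise}.
\end{cases}
\]
The first step is to verify that $f_L\in\w(G)$. Since $f\in\w(G)$, by Proposition~\ref{prop:weap_eps_char} it satisfies the $\eps$-almost period characterisation (property (2) there), and because the truncation map is $1$-Lipschitz, i.e.\ $|f_L(x)-f_L(y)|\leqslant |f(x)-f(y)|$, one immediately gets $\|f_L-T_tf_L\|_{S_{A_n}^p}\leqslant\|f-T_tf\|_{S_{A_n}^p}$. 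Hence $f_L$ inherits the relatively dense sets of almost periods from $f$ (uniformly in $n\geqslant N$), so again by Proposition~\ref{prop:weap_eps_char} we conclude $f_L\in\w(G)$. Clearly $f_L$ is bounded by $L$.

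The second and central step is to estimate $\|f-f_L\|_{\w}$ and show it tends to $0$ as $L\to\infty$. Fixing $\eps>0$, I would use the $\eps$-almost period property of $f$ to find a relatively dense set $R$ and $N\in\N$ with $\|f-T_tf\|_{S_{A_n}^p}<\tfrac{\eps}{3}$ for all $t\in R$ and $n\geqslant N$, and write $G=R+K$ for a compact $K$. Then, exactly as in the chain of inequalities \eqref{eq:zurhilfe1}, for each $y$ I would choose $t\in R$ with $y-t\in K$ and split
\[
\|f-f_L\|_{S_{A_n}^p} \leqslant \|f-T_tf\|_{S_{A_n}^p} + \|T_tf-T_tf_L\|_{S_{A_n}^p} + \|T_tf_L-f_L\|_{S_{A_n}^p},
\]
reducing the middle term to an integral of $|f-f_L|^p$ over a fixed compact region $K+A_n$, supported on the set $\mathsf{S}_L:=\{x: |f(x)|>L\}$.

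The main obstacle, and the key difference from the Stepanov case, is that the Weyl seminorm involves a $\limsup_{n\to\infty}$ rather than a fixed averaging set, so the Chebyshev-type bound $\theta_G(\mathsf{S}_L\cap E)\leqslant \tfrac1L\|f\|_{L^1(E)}$ used in Lemma~\ref{lem:bounded_stap} cannot be applied on a single compact set. I would handle this by working with the measure $\mu:=|f|^p\,\theta_G$ and invoking Proposition~\ref{prop:w<s} together with \cite[Lem. 3.3]{SS3}, which bound the Weyl average by a norm $\|\,\cdot\,\|_{S_C^p}$ over a fixed compact set $C$; this converts the $\limsup$ control of $|f-f_L|^p$ into a genuine integral over $C$ on which the measure-theoretic Chebyshev inequality applies. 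Since $|f-f_L|^p\leqslant |f|^p\,1_{\mathsf{S}_L}$ and $|f|^p$ is locally integrable with $\theta_G(\mathsf{S}_L)\to 0$ monotonically as $L\to\infty$, dominated convergence on $C$ yields an $L_0$ with the corresponding $S_C^p$-norm smaller than $\tfrac{\eps}{3}$.

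Combining the three pieces gives $\|f-f_{L_0}\|_{\w}<\eps$, and taking $b:=f_{L_0}$ finishes the proof. I expect the only delicate point to be phrasing the truncation estimate so that the $\limsup$ is correctly dominated by the fixed-compact-set average via Proposition~\ref{prop:w<s}; once that reduction is in place, the remaining estimates are routine and parallel to Lemma~\ref{lem:bounded_stap}.
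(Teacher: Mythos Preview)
Your overall plan—truncate to $f_L$, verify $f_L\in\w(G)$ via Proposition~\ref{prop:weap_eps_char}, and then run the three-term split of Lemma~\ref{lem:bounded_stap}—matches exactly what the paper indicates. The first step is fine, and the split is carried out correctly: for $n\geqslant N$ one obtains
\[
\|f-f_L\|_{S_{A_n}^p}\ \leqslant\ \tfrac{2\eps}{3}\ +\ \left(\frac{1}{\theta_G(A_n)}\int_{K'+A_n}|f-f_L|^p\,\dd\theta_G\right)^{1/p},
\]
with the supremum over $y$ already eliminated.

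The gap is in your treatment of the remaining $\limsup_n$. Proposition~\ref{prop:w<s} and \cite[Lem.~3.3]{SS3} do \emph{not} convert the Weyl average into ``a genuine integral over $C$''; they bound it by the Stepanov norm $\|f-f_L\|_{S_C^p}=\big(\sup_z\theta_G(C)^{-1}\!\int_{z+C}|f-f_L|^p\big)^{1/p}$, which still carries a supremum over all translates of $C$. Dominated convergence on a single copy of $C$ says nothing about this supremum, and your assertion ``$\theta_G(\mathsf S_L)\to 0$'' is false in general (the set $\{|f|>L\}$ can have infinite measure for every $L$). In fact $\|f-f_L\|_{S_C^p}$ need not tend to $0$: take $G=\R$, $p=1$, and $f=\sum_{k\geqslant 1}k\,1_{[2^k,\,2^k+1/k^2]}$; then $f\in\we(\R)$ with $\|f\|_{W^1}=0$, yet $\|f-f_L\|_{S_{[0,1]}^1}$ stays bounded away from $0$ for all $L$. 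So the bound you reach is useless for the conclusion you want.

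A clean way to close the argument (and one that makes the paper's one-line proof honest) is to bypass the $\limsup$ entirely: pick a trigonometric polynomial $P$ with $\|f-P\|_{\w}<\eps/2$ and take any $L\geqslant\|P\|_\infty$. Since truncation at level $L$ is the metric projection onto the closed ball $\{|z|\leqslant L\}$, which contains $P(x)$ for every $x$, one has the pointwise bound $|f_L(x)-P(x)|\leqslant|f(x)-P(x)|$ (equivalently, on $\mathsf S_L$ one has $|f-f_L|=|f|-L\leqslant|f|-\|P\|_\infty\leqslant|f-P|$, and off $\mathsf S_L$ it is zero). Hence $\|f-f_L\|_{\w}\leqslant\|f-P\|_{\w}+\|P-f_L\|_{\w}\leqslant 2\|f-P\|_{\w}<\eps$. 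This replaces your appeal to \cite{SS3} and dominated convergence with a one-line pointwise inequality.
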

\begin{proof}
This is proved like Lemma~\ref{lem:bounded_stap} using the characterisation from Proposition~\ref{prop:weap_eps_char}.
\end{proof}

\begin{proposition} \label{prop:holder_w}
Let $f\in\w(G)$, $g\in W_{\mathcal{A}}^q(G)$ such that $1=\frac{1}{p}+\frac{1}{q}$. Then, $fg\in\we(G)$.
\end{proposition}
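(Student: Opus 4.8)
The statement to prove is Proposition~\ref{prop:holder_w}: a Hölder-type inequality for Weyl almost periodic functions, $f \in \w(G)$ and $g \in W_{\mathcal{A}}^q(G)$ with conjugate exponents gives $fg \in \we(G) = W_{\mathcal{A}}^1(G)$.

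The plan is to mimic the proof of the Stepanov analogue (Proposition~\ref{prop:holder_s}), which was immediate from Hölder's inequality, but here we work at the level of the Weyl seminorm and approximation by trigonometric polynomials. The key point is that the product of two trigonometric polynomials is again a trigonometric polynomial, so I approximate $f$ and $g$ separately and control the cross terms using a pointwise Hölder inequality inside each integral $\int_{y+A_n}$.

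Let me sketch the steps. Given $\eps > 0$, choose trigonometric polynomials $P$ approximating $f$ in the $W^p$-sense and $Q$ approximating $g$ in the $W^q$-sense, say both to within a small tolerance $\eta$ to be fixed. Write $fg - PQ = (f-P)g + P(g-Q)$, so that
\[
\|fg - PQ\|_{W^1} \leqslant \mathcal{M}_{W^1,\mathcal{A}}((f-P)g) + \mathcal{M}_{W^1,\mathcal{A}}(P(g-Q)).
\]
For the first term, apply Hölder's inequality pointwise inside the integral over $y + A_n$: for fixed $y$ and $n$,
\[
\frac{1}{\theta_G(A_n)} \int_{y+A_n} |f-P|\,|g|\ \dd\theta_G
\leqslant \Big(\tfrac{1}{\theta_G(A_n)}\!\int_{y+A_n}\!|f-P|^p\Big)^{1/p}
\Big(\tfrac{1}{\theta_G(A_n)}\!\int_{y+A_n}\!|g|^q\Big)^{1/q}.
\]
Taking $\sup_y$ and then $\limsup_n$, the first factor is bounded by $\mathcal{M}_{W^p,\mathcal{A}}(f-P) < \eta$ and the second by $\|g\|_{W^q}$-type quantity. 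The main subtlety is that $g$ itself need only be in $W_{\mathcal{A}}^q(G)$, so I must first ensure its $W^q$-seminorm is finite; since $g = Q + (g-Q)$ with $Q$ Bohr almost periodic and hence $S^q$-bounded, and $g - Q$ small in the Weyl sense, the quantity $\limsup_n \sup_y (\theta_G(A_n)^{-1}\int_{y+A_n}|g|^q)^{1/q}$ is finite. The second term is handled symmetrically, using that $P$ is a trigonometric polynomial and thus bounded in sup-norm, so $\mathcal{M}_{W^1,\mathcal{A}}(P(g-Q)) \leqslant \|P\|_\infty\, \mathcal{M}_{W^q,\mathcal{A}}(g-Q) < \|P\|_\infty\,\eta$.

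The hard part will be bookkeeping the order of quantifiers: unlike the Stepanov case there is no single norm, and the $\limsup_n$ interacts with the products, so one must be careful that $\limsup$ of a product is controlled (which works because one factor converges in $\limsup$ to something small while the other stays bounded, and $\limsup(a_n b_n) \leqslant (\limsup a_n)(\limsup b_n)$ for nonnegative sequences). Once $\eta$ is chosen small relative to the bounds $\|g\|_{W^q}$ and $\|P\|_\infty$ (the latter depending on the already-fixed $P$), the product $PQ$ is a trigonometric polynomial with $\mathcal{M}_{W^1,\mathcal{A}}(fg - PQ) < \eps$, which by the definition of $\we(G)$ shows $fg \in \we(G)$. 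I would remark that, as in Proposition~\ref{prop:holder_s}, this is essentially an immediate consequence of Hölder's inequality applied to the Weyl seminorm, the only extra care being the approximation argument to reduce to trigonometric polynomials.
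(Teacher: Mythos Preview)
Your proposal is correct and matches the paper's approach exactly: the paper's proof is the single sentence ``This is an immediate consequence of H\"older's inequality,'' and your write-up is precisely the natural unpacking of that claim via the decomposition $fg-PQ=(f-P)g+P(g-Q)$. The only minor slip is the circularity in choosing $\eta$ before $P$ while letting $\eta$ depend on $\|P\|_\infty$; this is easily repaired by fixing $P$ first (so $\|P\|_\infty$ is known) and then choosing $Q$ with tolerance $\eps/(2\|P\|_\infty)$, exactly as in the proof of Proposition~\ref{prop:prod_stap}, and you already flag the quantifier bookkeeping as the point requiring care.
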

\begin{proof}
This is an immediate consequence of H\"olders inequality.
\end{proof}

At the end of this section, let us briefly mention some additional properties of bounded Weyl almost periodic functions.

\begin{proposition} \label{prop:prod_weap}
Let $f,g\in\w(G)$, and let $f$ be bounded. Then, $f g\in\w(G)$.
\end{proposition}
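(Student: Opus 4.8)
The plan is to adapt the proof of Proposition~\ref{prop:prod_stap} essentially verbatim, with the Stepanov norm $\|\cdot\|_{S^p}$ replaced by the Weyl functional $\mathcal{M}_{W^p,\mathcal{A}}$. First I would record two elementary facts about this functional. On the one hand, Minkowski's inequality applied on each $L^p(y+A_n)$, together with the subadditivity of $\sup_{y\in G}$ and of $\limsup_{n\to\infty}$, shows that $\mathcal{M}_{W^p,\mathcal{A}}$ satisfies the triangle inequality $\mathcal{M}_{W^p,\mathcal{A}}(h_1+h_2)\leqslant \mathcal{M}_{W^p,\mathcal{A}}(h_1)+\mathcal{M}_{W^p,\mathcal{A}}(h_2)$. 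On the other hand, for bounded $h_1$ the pointwise estimate $|h_1 h_2|^p\leqslant \|h_1\|_{\infty}^p\,|h_2|^p$ yields the multiplicative bound $\mathcal{M}_{W^p,\mathcal{A}}(h_1 h_2)\leqslant \|h_1\|_{\infty}\, \mathcal{M}_{W^p,\mathcal{A}}(h_2)$. I would also note at the outset that the statement is meaningful, since $f$ bounded and $g\in L_{\text{loc}}^p(G)$ force $fg\in L_{\text{loc}}^p(G)$, and that we may assume $\|f\|_{\infty}>0$, as otherwise $f=0$ almost everywhere and $fg=0\in\w(G)$ trivially.

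With these preliminaries, the argument mirrors Proposition~\ref{prop:prod_stap}. Fix $\eps>0$. Using $g\in\w(G)$, I choose a trigonometric polynomial $Q$ with $\mathcal{M}_{W^p,\mathcal{A}}(g-Q)<\frac{\eps}{2\,\|f\|_{\infty}}$; since $Q$ is a trigonometric polynomial it is bounded, and (discarding the trivial case $\|Q\|_{\infty}=0$) I use $f\in\w(G)$ to choose a trigonometric polynomial $P$ with $\mathcal{M}_{W^p,\mathcal{A}}(f-P)<\frac{\eps}{2\,\|Q\|_{\infty}}$. Writing $fg-PQ=f(g-Q)+Q(f-P)$ and applying the triangle inequality and then the multiplicative bound to each summand gives
\[
\mathcal{M}_{W^p,\mathcal{A}}(fg-PQ)\leqslant \|f\|_{\infty}\,\mathcal{M}_{W^p,\mathcal{A}}(g-Q)+\|Q\|_{\infty}\,\mathcal{M}_{W^p,\mathcal{A}}(f-P)<\eps\,.
\]
As $PQ$ is once more a trigonometric polynomial, this exhibits $fg$ as approximable arbitrarily well by trigonometric polynomials with respect to $\mathcal{M}_{W^p,\mathcal{A}}$, whence $fg\in\w(G)$.

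There is no serious obstacle here; the essential point---already visible in the Stepanov version---is that boundedness of only one factor suffices, because in the decomposition $fg-PQ=f(g-Q)+Q(f-P)$ the bounded function $f$ multiplies the small term $g-Q$ in the first summand, while in the second summand it is the automatically bounded trigonometric polynomial $Q$ that multiplies $f-P$. Thus the estimate never requires a sup-norm bound on $g$. The only minor care needed is to invoke the multiplicative bound with the correct essential supremum and to dispose of the degenerate cases $\|f\|_{\infty}=0$ and $\|Q\|_{\infty}=0$ separately.
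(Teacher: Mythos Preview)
Your proposal is correct and follows exactly the approach the paper intends: the paper's own proof of this proposition consists of the single sentence ``Simply imitate the proof of Proposition~\ref{prop:prod_stap},'' and your write-up does precisely that, replacing $\|\cdot\|_{S^p}$ by $\mathcal{M}_{W^p,\mathcal{A}}$ and recording the subadditivity and the bounded-multiplier estimate needed to make the same chain of inequalities go through.
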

\begin{proof}
Simply imitate the proof of Proposition~\ref{prop:prod_stap}.
\end{proof}

\begin{proposition} (\cite[Prop. 4.11]{LSS})
Let $f : G\to \C$ be a bounded and measurable function.
Let  $\mathcal{A}$ and $\mathcal{B}$ be van Hove sequences. Then, $f$ belongs to
$\w(G)$ if and only if it belongs to $Wap_{\mathcal{B}}^p (G)$. If
$f$ belongs to $\w (G)$ and $Wap_{\mathcal{B}}^p (G)$, then $ \| f
\|_{\w} = \| f \|_{W_{\mathcal{B}^p}}$ holds.
\end{proposition}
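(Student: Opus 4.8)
The plan is to reduce both assertions — membership and equality of seminorms — to a single statement: for every bounded measurable $g\colon G\to\C$, the value $\mathcal{M}_{W^p,\mathcal{A}}(g)$ is the same for every van Hove sequence $\mathcal{A}$. Once this is available, the proposition follows from the trigonometric-polynomial description of $\w(G)$ together with amenability, as described below.

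First I would prove the van-Hove-independence. Since $g$ is bounded, the measure $\mu:=|g|^p\,\theta_G$ has bounded density and hence is translation bounded. Exactly as in the proof of Proposition~\ref{prop:w<s}, an application of \cite[Lem.~3.3]{SS3} gives, for an arbitrary van Hove sequence $\mathcal{A}$,
\[
\mathcal{M}_{W^p,\mathcal{A}}(g)^p \;=\; \limsup_{n\to\infty}\,\sup_{y\in G}\frac{|\mu|(y+A_n)}{\theta_G(A_n)} \;=\; \inf_{C}\,\frac{\|\mu\|_{C}}{\theta_G(C)}\,,
\]
where the infimum runs over all compact sets $C\subseteq G$ with non-empty interior. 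The right-hand side makes no reference to $\mathcal{A}$, so $\mathcal{M}_{W^p,\mathcal{A}}(g)=\mathcal{M}_{W^p,\mathcal{B}}(g)$. It is instructive to note where boundedness enters concretely: fixing a precompact open set $A$ and covering $y+B_m$ by translates of $A$, a Fubini computation yields
\[
\frac{1}{\theta_G(B_m)}\int_{y+B_m} |g|^p\,\dd\theta_G \;\leqslant\; \sup_{z\in G}\frac{1}{\theta_G(A)}\int_{z+A}|g|^p\,\dd\theta_G \;+\; \|g\|_{\infty}^p\,\frac{\theta_G(\partial^{K}B_m)}{\theta_G(B_m)}\,,
\]
with $K$ a compact set determined by $A$; here the error term is uniform in $y$ and tends to $0$ as $m\to\infty$ by the van Hove property, precisely because $\|g\|_{\infty}<\infty$.

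With the independence established, the two claims are immediate. For the equivalence, let $f\in\w(G)$ and choose trigonometric polynomials $(p_m)_{m\in\N}$ with $\mathcal{M}_{W^p,\mathcal{A}}(f-p_m)\to0$. Each $f-p_m$ is bounded, so $\mathcal{M}_{W^p,\mathcal{B}}(f-p_m)=\mathcal{M}_{W^p,\mathcal{A}}(f-p_m)\to0$, whence $f\in W_{\mathcal{B}}^p(G)$; the converse inclusion is symmetric. For the equality of seminorms, recall that $f$ is amenable with respect to both sequences by Proposition~\ref{prop:amenable}, so that $\|f\|_{\w}=\mathcal{M}_{W^p,\mathcal{A}}(f)$ and $\|f\|_{W_{\mathcal{B}}^p}=\mathcal{M}_{W^p,\mathcal{B}}(f)$; applying the independence to $g=f$ then gives $\|f\|_{\w}=\|f\|_{W_{\mathcal{B}}^p}$.

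The main obstacle is the van-Hove-independence of $\mathcal{M}_{W^p,\mathcal{A}}$ on bounded functions, and this is exactly the step that fails without the boundedness hypothesis: it rests on the translation boundedness of $|g|^p\,\theta_G$ and hence on \cite[Lem.~3.3]{SS3}. The remaining arguments — the trigonometric-polynomial approximation and the passage from $\mathcal{M}_{W^p,\mathcal{A}}$ to $\|\cdot\|_{\w}$ via amenability — are routine.
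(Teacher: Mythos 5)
The paper itself offers no proof of this proposition --- it is quoted from \cite[Prop.~4.11]{LSS} --- so there is nothing internal to compare against; your argument is a self-contained substitute, and its architecture is sound: reduce both claims to the van-Hove-independence of $\mathcal{M}_{W^p,\mathcal{A}}(g)$ for bounded measurable $g$, then feed that into the trigonometric-polynomial definition of $\w(G)$ (noting that each $f-p_m$ is again bounded) and into Proposition~\ref{prop:amenable} for the seminorm identity. The one step you overstate is the chain $\mathcal{M}_{W^p,\mathcal{A}}(g)^p=\inf_C\|\mu\|_C/\theta_G(C)$: as it is used in the proof of Proposition~\ref{prop:w<s}, \cite[Lem.~3.3]{SS3} supplies only the one-sided bound $\limsup_n\sup_y|\mu|(y+A_n)/\theta_G(A_n)\leqslant\|\mu\|_C/\theta_G(C)$ for \emph{some} compact $C$, not equality with the infimum; the reverse inequality $\inf_C\leqslant\limsup_n$ would need its own argument (e.g.\ taking $C=\overline{A_n}$ and using $\mu\leqslant\|g\|_\infty^p\,\theta_G$ to control $\mu$ on $\overline{A_n}\setminus A_n$). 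Fortunately the equality is not needed: your displayed covering/Fubini estimate, applied with $A=A_n$ and $B_m$ running through $\mathcal{B}$, already yields $\mathcal{M}_{W^p,\mathcal{B}}(g)^p\leqslant\sup_{z}\theta_G(A_n)^{-1}\int_{z+A_n}|g|^p\,\dd\theta_G$ for every $n$, hence $\mathcal{M}_{W^p,\mathcal{B}}(g)^p\leqslant\liminf_n\sup_z(\cdots)\leqslant\mathcal{M}_{W^p,\mathcal{A}}(g)^p$, and symmetry gives the independence. I would therefore state the key lemma in that two-sided covering form and drop the $\inf_C$ identity; with that adjustment the proof is complete, and your observation that boundedness is consumed precisely in making the error term $\|g\|_\infty^p\,\theta_G(\partial^K B_m)/\theta_G(B_m)$ vanish is exactly right.
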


The previous proposition says that a bounded Weyl $p$-almost periodic function is independent of the choice of the van Hove sequence. 

\section{Convolutions with Stepanov and Weyl almost periodic functions} \label{sec:conv}

In this section, we will have a very brief look at convolutions with Stepanov almost periodic functions. We will consider the following three kinds of convolutions:
\begin{enumerate}
\item[(1)] If $f$ and $g$ are functions from $G$ to $\C$, we define
\[
(f*g)(x) := \int_G f(x-y)\, g(y)\ \dd \theta_G(y)\,,
\]
whenever the integral exists.
\item[(2)] If $\mu$ is a measure on $G$, we define
\[
(f*\mu)(x) := \int_G f(x-y)\ \dd\mu(y)\,,
\]
whenever the integral exists.
\item[(3)] Last, we define the \textit{Eberlein convolution} of $f$ and $g$ (with respect to the van Hove sequence $(A_n)_{n\in\N}$) by
\[
(f\circledast g)(x) := \lim_{n\to\infty} \frac{1}{\theta_G(A_n)} \int_{A_n} f(x-y)\, g(y)\ \dd\theta_G(y)\,,
\]
whenever the limit exists.
\end{enumerate}

\begin{proposition}
Let $f\in\s(G)$, and let $\mu$ be a finite measure. Then, $f*\mu\in\s(G)$.
\end{proposition}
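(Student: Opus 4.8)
The plan is to show that translation by $\mu$ commutes appropriately with the $\|\cdot\|_{S^p}$-norm, so that the Stepanov almost periodicity of $f$ transfers to $f*\mu$. The cleanest route uses the characterisation of $\s(G)$ via relatively dense sets of $\eps$-almost periods (the definition of $\s(G)$), together with the key observation that convolution with a finite measure contracts the Stepanov norm. First I would establish the fundamental estimate
\[
\|(f*\mu) - T_t(f*\mu)\|_{S^p} \leqslant \|\mu\|\cdot \|f - T_tf\|_{S^p}\,,
\]
where $\|\mu\|$ denotes the total variation of $\mu$. The point is that $T_t(f*\mu) = (T_tf)*\mu$, since translation passes through the convolution integral, so that $(f*\mu) - T_t(f*\mu) = (f - T_tf)*\mu$. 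Thus it suffices to prove the single inequality $\|g*\mu\|_{S^p} \leqslant \|\mu\|\cdot\|g\|_{S^p}$ for $g\in\s(G)$, applied to $g = f - T_tf$.

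To prove this contraction inequality, I would write out the definition of $\|g*\mu\|_{S^p}$ and manipulate the inner $L^p$-norm on $y+K$ using Minkowski's integral inequality with respect to the measure $\dd\mu(z)$. Concretely, $(g*\mu)(x) = \int_G g(x-z)\,\dd\mu(z)$, so for fixed $y$,
\[
\left(\frac{1}{\theta_G(K)}\int_{y+K} |(g*\mu)(x)|^p\,\dd\theta_G(x)\right)^{\frac{1}{p}} \leqslant \int_G \left(\frac{1}{\theta_G(K)}\int_{y+K}|g(x-z)|^p\,\dd\theta_G(x)\right)^{\frac{1}{p}}\dd|\mu|(z)\,.
\]
The inner term is exactly $\|T_zg\|_{S^p}$ evaluated over the slab $y+K$, which is bounded above by $\|g\|_{S^p}$ uniformly in $y$ and $z$ (translation invariance of the Stepanov norm, which follows from Lemma~\ref{lem:2}(e) and the translation invariance of Haar measure). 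Taking the supremum over $y$ yields $\|g*\mu\|_{S^p}\leqslant |\mu|(G)\,\|g\|_{S^p} = \|\mu\|\,\|g\|_{S^p}$, as desired. One preliminary point to check is that $f*\mu$ is well-defined and locally $p$-integrable; this follows because $f\in\s(G)$ is $S^p$-bounded by Lemma~\ref{lem:2}(a) and $\mu$ is finite, so the convolution integral converges absolutely almost everywhere and the same Minkowski estimate gives $\|f*\mu\|_{S^p}\leqslant\|\mu\|\,\|f\|_{S^p}<\infty$.

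With the contraction inequality in hand, the conclusion is immediate. Given $\eps>0$, the almost periodicity of $f$ supplies a relatively dense set $R$ of points $t$ with $\|f-T_tf\|_{S^p} < \eps/\|\mu\|$ (assuming $\|\mu\|>0$, the degenerate case being trivial). For every such $t$,
\[
\|(f*\mu) - T_t(f*\mu)\|_{S^p} = \|(f-T_tf)*\mu\|_{S^p} \leqslant \|\mu\|\cdot\|f-T_tf\|_{S^p} < \eps\,,
\]
so the same relatively dense set $R$ witnesses that $f*\mu$ is Stepanov $p$-almost periodic. The main obstacle, and the only step requiring genuine care, is the justification of the Minkowski integral inequality for the finite (possibly complex) measure $\mu$: one must pass to the total variation $|\mu|$ and verify the measurability and integrability conditions that license interchanging the order of integration. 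Everything else is a formal consequence of translation invariance and the linearity of convolution.
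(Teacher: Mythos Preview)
Your proposal is correct and follows essentially the same route as the paper: both arguments observe that $T_t(f*\mu)=(T_tf)*\mu$ and then apply Minkowski's integral inequality to obtain the contraction estimate $\|(f-T_tf)*\mu\|_{S^p}\leqslant |\mu|(G)\,\|f-T_tf\|_{S^p}$, from which the relatively dense set of $\eps$-almost periods for $f$ transfers directly to $f*\mu$. If anything, your version is slightly more careful in passing to the total variation $|\mu|$ and in verifying that $f*\mu$ is well-defined and $S^p$-bounded.
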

\begin{proof}
The statement follows from Minkowskis inequality for integrals because
\begin{align*}
\|f*\mu-T_t(f*\mu)\|_{S^p} 
    &= \|(f-T_tf)*\mu\|_{S^p}  \\
    &= \sup_{y\in G} \left( \frac{1}{\theta_G(K)} \int_{y+K} |((f-T_tf)*\mu)(x)
       |^p\ \dd \theta_G(x) \right)^{\frac{1}{p}}  \\
    &=  \sup_{y\in G} \left( \frac{1}{\theta_G(K)}\int_{y+K}\left|\int_G(f-T_tf)
       (x-z)\ \dd \mu(z)\right|^p\ \dd \theta_G(x) \right)^{\frac{1}{p}}  \\ 
    &\leqslant  \sup_{y\in G} \int_G \left( \frac{1}{\theta_G(K)}\int_{y+K} 
       |(f-T_tf)(x-z)|^p \ \dd \theta_G(x) \right)^{\frac{1}{p}}\ \dd\mu(z) \\
    &\leqslant \int_G \|f-T_tf\|_{S^p} \ \dd\mu(z) \\     
    &= \mu(G)\, \|f-T_tf\|_{S^p}\,.   \qedhere
\end{align*}   
\end{proof}

The next corollary is an immediate consequence.

\begin{corollary} \label{coro:conv_l1}
Let $f\in\s(G)$, and let $g\in L^1(G)$. Then, $f*g\in\s(G)$. 
\end{corollary}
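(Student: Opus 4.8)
The plan is to reduce the statement to the previous proposition, which established that $f * \mu \in \s(G)$ whenever $f \in \s(G)$ and $\mu$ is a finite measure. The key observation is that an $L^1$-function $g$ induces a finite (complex) measure on $G$ via $\mu_g := g\, \theta_G$, that is, $\dd\mu_g(y) = g(y)\,\dd\theta_G(y)$. Since $g \in L^1(G)$, the total variation $|\mu_g|(G) = \int_G |g(y)|\,\dd\theta_G(y) = \|g\|_{L^1(G)} < \infty$, so $\mu_g$ is indeed a finite measure.

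First I would verify that the two notions of convolution agree under this identification, namely that
\[
(f * g)(x) = \int_G f(x-y)\, g(y)\ \dd\theta_G(y) = \int_G f(x-y)\ \dd\mu_g(y) = (f * \mu_g)(x)
\]
for those $x$ where the integral exists. This is immediate from the definition of $\mu_g$ and holds almost everywhere, which is all that is needed for the $\|\cdot\|_{S^p}$-norm.

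Having made this identification, I would simply apply the previous proposition to $\mu_g$: since $f \in \s(G)$ and $\mu_g$ is a finite measure, we conclude $f * g = f * \mu_g \in \s(G)$. In particular, the quantitative bound from that proof carries over, giving $\|f*g - T_t(f*g)\|_{S^p} \leqslant \|g\|_{L^1(G)}\, \|f - T_tf\|_{S^p}$, which makes the relative denseness of the $\eps$-almost periods of $f*g$ transparent from that of $f$.

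I do not expect any genuine obstacle here; this is precisely why the statement is phrased as a corollary rather than a proposition. The only point requiring a moment's care is confirming that $\mu_g$ is a finite \emph{complex} measure (or that one may reduce to the positive case by splitting $g$ into real and imaginary parts and then into positive and negative parts), but the total-variation estimate above handles this uniformly. The proof is therefore a one-line invocation of the preceding result once the measure $\mu_g = g\,\theta_G$ is introduced.
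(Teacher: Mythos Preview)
Your proposal is correct and matches the paper's approach exactly: the paper simply states that the corollary is an immediate consequence of the preceding proposition (that $f*\mu\in\s(G)$ for finite $\mu$), and your reduction via $\mu_g = g\,\theta_G$ is precisely how that implication is realised. The quantitative bound you extract, $\|f*g - T_t(f*g)\|_{S^p} \leqslant \|g\|_{L^1(G)}\,\|f-T_tf\|_{S^p}$, is the specialisation of the proposition's estimate $\mu(G)\,\|f-T_tf\|_{S^p}$ to this case.
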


In general, the convolution of two Weyl almost periodic functions does not exist. For this reason, we consider its averaged version - the Eberlein convolution.

\begin{lemma}
Let $f\in\w(G)$, and let $g\in W^q(G)$ for $q\geqslant 1$ with $\frac{1}{p}+\frac{1}{q}=1$. Then, the limit $f\circledast g$ exists.
\end{lemma}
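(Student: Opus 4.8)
The plan is to recognise $(f\circledast g)(x)$ as a van Hove mean and to deduce its existence from amenability. Fix $x\in G$ and set $h_x(y):=f(x-y)\,g(y)$. Then
\[
\frac{1}{\theta_G(A_n)}\int_{A_n} f(x-y)\,g(y)\ \dd\theta_G(y) = \frac{1}{\theta_G(A_n)}\int_{0+A_n} h_x(y)\ \dd\theta_G(y)\,,
\]
so it suffices to prove that $h_x\in\we(G)$. Once this is known, Proposition~\ref{prop:amenable} (whose proof works verbatim for any exponent, in particular for $\we(G)$) shows that $\theta_G(A_n)^{-1}\int_{y+A_n} h_x$ converges uniformly in $y\in G$, and evaluating at $y=0$ gives precisely the existence of the limit defining $(f\circledast g)(x)$.

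To see $h_x\in\we(G)$, I would write $f(x-\cdot)=T_x f^{\dagger}$, where $f^{\dagger}(y)=f(-y)$. The claim then reduces to the invariance of $\w(G)$ under translations and under the reflection $f\mapsto f^{\dagger}$: granting these, $f(x-\cdot)=T_x f^{\dagger}\in\w(G)=W_{\mathcal{A}}^p(G)$, and since $g\in W_{\mathcal{A}}^q(G)$ with $\frac1p+\frac1q=1$, Proposition~\ref{prop:holder_w} yields $h_x=f(x-\cdot)\,g\in\we(G)$, as required. Translation invariance is immediate, because the supremum over $y\in G$ in the Weyl seminorm absorbs the shift: replacing $f$ by $T_tf$ (and a trigonometric approximant $P$ by $T_tP$) only reparametrises that supremum, so $\mathcal{M}_{W^p,\mathcal{A}}(T_tf-T_tP)=\mathcal{M}_{W^p,\mathcal{A}}(f-P)$.

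The reflection invariance is the one step that requires genuine work, and it is the main obstacle. Here one uses that $-\mathcal{A}:=(-A_n)_{n\in\N}$ is again a van Hove sequence (since $\partial^{K}(-A)=-\partial^{-K}(A)$ and $\theta_G$ is inversion invariant) and that the reflection of a trigonometric polynomial is again a trigonometric polynomial. A short computation shows $\mathcal{M}_{W^p,\mathcal{A}}(f^{\dagger}-P^{\dagger})=\mathcal{M}_{W^p,-\mathcal{A}}(f-P)$, so the content is exactly that a trigonometric approximation valid along $\mathcal{A}$ remains valid along $-\mathcal{A}$; that is, that the Weyl class is independent of the van Hove sequence. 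I would establish this through the $\eps$-almost-period characterisation of Proposition~\ref{prop:weap_eps_char}: from $f\in\w(G)$ one gets a relatively dense $R$ and an $N$ with the \emph{uniform} bound $\sup_{z\in G}\theta_G(A_N)^{-1}\int_{z+A_N}|f-T_tf|^p\,\dd\theta_G<\eps^p$ for all $t\in R$, and this single-scale estimate can be transferred to the members of any other van Hove sequence by covering each of them with finitely many translates of $A_N$, the overlaps and the leftover being controlled by the vanishing relative boundary. Checking Proposition~\ref{prop:weap_eps_char}(2) for $-\mathcal{A}$ then gives $f^{\dagger}\in\w(G)$.

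The delicate point in the last step is carrying out the covering uniformly in the base point, where the van Hove property enters decisively; this is the part I expect to be the crux of the argument. As a cross-check, in the bounded case one may bypass the covering estimate entirely and invoke the independence of the van Hove sequence from \cite[Prop.\ 4.11]{LSS}, which yields reflection invariance directly for bounded Weyl almost periodic functions.
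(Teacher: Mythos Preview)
Your route is genuinely different from the paper's. The paper proves the statement directly as a Cauchy criterion: writing $f=P+r$ and $g=Q+s$ with $P,Q$ trigonometric polynomials and $\|r\|_{W^p},\|s\|_{W^q}$ small, it estimates
\[
\left|\frac{1}{\theta_G(A_n)}\int_{A_n} f(x-z)\,g(z)\ \dd\theta_G(z) - (P\circledast Q)(x)\right|
\]
using only H\"older and Jensen, and concludes that the averages form a Cauchy sequence. No reflection, no change of van Hove sequence, no appeal to Proposition~\ref{prop:holder_w} or Proposition~\ref{prop:amenable} is needed.

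Your reduction to $h_x=f(x-\cdot)\,g\in\we(G)$ via Proposition~\ref{prop:holder_w} and Proposition~\ref{prop:amenable} is conceptually clean, but the crucial step---showing $f^{\dagger}\in\w(G)$ for the \emph{same} sequence $\mathcal{A}$---is not secured by the covering argument you sketch. When you cover $-A_m$ by translates of $A_N$, the resulting bound is $\|h\|_{S^p_{-A_m}}^p\leqslant \frac{k_m\,\theta_G(A_N)}{\theta_G(A_m)}\,\|h\|_{S^p_{A_N}}^p$, and a packing argument only gives $\limsup_m k_m\theta_G(A_N)/\theta_G(A_m)\leqslant \theta_G(A_N)/\theta_G(V)$ for some neighbourhood $V$ with $V+V\subseteq A_N$. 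This constant depends on $A_N$, hence on $N=N(\eps)$, so you cannot simply absorb it by shrinking $\eps$; the van Hove boundary controls the \emph{leftover} but not the overlap multiplicity. That the paper itself imposes the symmetry hypothesis $-A_n=A_n$ in Lemma~\ref{lem:coeffi} precisely to obtain $\widetilde{f}\in W^2(G)$ is a strong hint that reflection invariance of $\w(G)$ is not available in general without extra assumptions. Your cross-check via \cite[Prop.~4.11]{LSS} indeed handles the bounded case, but the passage to unbounded $f$ would then still require controlling $\|f-b\|_{W^p_{-\mathcal{A}}}$, which faces the same obstacle. The paper's decomposition argument sidesteps all of this at the cost of a short direct estimate.
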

\begin{proof}
Fix $\eps>0$. Since $f\in\w(G)$ and $g\in W^q(G)$, we can write
\[
f(x) = P(x) + r(x) \qquad \text{ and } \qquad g(x) = Q(x) + s(x) \,,
\]
where $P,Q$ are trigonometric polynomials and $r$ and $s$ satisfy
\[
\limsup_{n\to\infty} \, \sup_{y\in G}\, \left( \frac{1}{\theta_G(A_n)} \int_{y+A_n} |r(x)|^p\ \dd \theta_G(x) \right)^{\frac{1}{p}} < \frac{\eps}{8\, \|Q\|_{\infty}} 
\] 
and 
\[
\limsup_{n\to\infty} \, \sup_{y\in G}\, \left( \frac{1}{\theta_G(A_n)} \int_{y+A_n} |s(x)|^q\ \dd \theta_G(x) \right)^{\frac{1}{q}} < \frac{\eps}{8\, \|P\|_{\infty}} \,.
\] 
Therefore, there is an $N\in\N$ such that (via the Jensen and H\"older inequality)
\begin{align*}
{}
    & \left| \frac{1}{\theta_G(A_n)} \int_{A_n} f(x-z)\, g(z)\ \dd\theta_G(z) - (P\circledast Q)(x) \right| \\
    &\phantom{++++}\leqslant \left| \frac{1}{\theta_G(A_n)} \int_{A_n} P(x-z)\, Q(z)\ \dd\theta_G(z)  - 
          (P\circledast Q)(x) \right| \\
    &\phantom{========} + \left| \frac{1}{\theta_G(A_n)} \int_{A_n} \Big( Q(y)\, r(x-z) +  P(x-z)\, s(z)
           + r(x-z)\, s(z) \Big) \dd\theta_G(z) \right|  \\
    &\phantom{++++}< \frac{\eps}{8} + \|Q\|_{\infty} \, \frac{1}{\theta_G(A_n)} \int_{A_n} |r(x-z)|\ 
    \dd\theta_G(z)  + \|P\|_{\infty}\, \frac{1}{\theta_G(A_n)} \int_{A_n} |s(z)|\ 
        \dd\theta_G(z)   \\
    &\phantom{========} +  \frac{1}{\theta_G(A_n)} \int_{A_n} |r(x-z)\, s(z)|\ \dd\theta_G(z)  \\
    &\phantom{++++}\leqslant \frac{\eps}{8}  +  \|Q\|_{\infty} \, \left(\frac{1}{\theta_G(A_n)} \int_{A_n} 
        |r(x-z)|^p\ \dd\theta_G(z)  \right)^{\frac{1}{p}}  \\
    &\phantom{========} + \|P\|_{\infty}\, \left( \frac{1}{\theta_G(A_n)} \int_{A_n} |s(z)|^q\ 
        \dd\theta_G(z) \right)^{\frac{1}{q}}  +  \frac{1}{\theta_G(A_n)} \int_{A_n} |r(x-z)\, s(z)|\ 
        \dd\theta_G(z)       \\
    &\phantom{++++}< \frac{3\,\eps}{8} + \left( \frac{1}{\theta_G(A_n)}
        \int_{A_n} |s(z)|^q\ \dd\theta_G(z) \right)^{\frac{1}{q}} \left(\frac{1}{\theta_G(A_n)} \int_{A_n} 
        |r(x-z)|^p\ \dd\theta_G(z)  \right)^{\frac{1}{p}}  \\
    &\phantom{++++} < \frac{3\,\eps}{8}  + \frac{\eps^2}{16\, \|Q\|_{\infty}\, \|P\|_{\infty}}\,  < \,     
         \frac{\eps}{2}     
\end{align*}
for all $n\geqslant N$. Consequently, we obtain
\[
\left| \frac{1}{\theta_G(A_{n'})} \int_{y+A_{n'}} f(x-y)\, g(y)\ \dd \theta_G(y) - \frac{1}{\theta_G(A_{n''})} \int_{y+A_{n''}} f(x-y)\, g(y)\ \dd \theta_G(y)   \right|
\, < \, \eps
\]
for all $n',n''\geqslant N$. This finishes the proof.
\end{proof}

\begin{proposition} \label{prop:conv_s_ap}
Let $f\in\w(G)$, and let $g\in W^q(G)$ for $q\geqslant 1$ with $\frac{1}{p}+\frac{1}{q}=1$. Then, $f\circledast g$ is a Bohr almost periodic function.
\end{proposition}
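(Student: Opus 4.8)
The plan is to realise $f\circledast g$ as a uniform limit of Bohr almost periodic functions and then to invoke the fact that the Bohr almost periodic functions form a $\|\cdot\|_{\infty}$-closed subspace of the bounded continuous functions; the uniform limit is then again Bohr almost periodic and, in particular, continuous.

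First I would compute the Eberlein convolution of two trigonometric polynomials and show that it is again a trigonometric polynomial. Writing $P=\sum_j a_j\chi_j$ and $Q=\sum_k b_k\psi_k$ with $\chi_j,\psi_k\in\widehat{G}$, and using $\chi_j(x-z)=\chi_j(x)\,\overline{\chi_j(z)}$, the integrand factorises as
\[
P(x-z)\,Q(z)=\sum_{j,k}a_jb_k\,\chi_j(x)\,\big(\overline{\chi_j}\,\psi_k\big)(z)\,.
\]
Since the mean $\lim_{n\to\infty}\frac{1}{\theta_G(A_n)}\int_{A_n}\chi(z)\ \dd\theta_G(z)$ of a character equals $1$ when $\chi$ is trivial and $0$ otherwise, only the terms with $\chi_j=\psi_k$ survive, so that
\[
(P\circledast Q)(x)=\sum_{\chi_j=\psi_k}a_jb_k\,\chi_j(x)\,,
\]
a finite linear combination of characters. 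Hence $P\circledast Q$ is a trigonometric polynomial and therefore Bohr almost periodic.

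Second, I would reuse the estimate established in the proof of the preceding lemma. Fixing $\eps>0$ and decomposing $f=P+r$, $g=Q+s$ with $P,Q$ trigonometric polynomials and $r,s$ small in the Weyl seminorm, that estimate produces a number $N\in\N$ with
\[
\left| \frac{1}{\theta_G(A_n)} \int_{A_n} f(x-z)\, g(z)\ \dd\theta_G(z) - (P\circledast Q)(x) \right| < \frac{\eps}{2}
\]
for all $n\geqslant N$, and the bound is uniform in $x\in G$ because it depends only on $\|P\|_{\infty}$, $\|Q\|_{\infty}$ and the Weyl seminorms of $r$ and $s$. Letting $n\to\infty$ and using that the left-hand average converges to $(f\circledast g)(x)$, I obtain $\|f\circledast g-P\circledast Q\|_{\infty}\leqslant\eps/2$.

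Finally, choosing $\eps=1/m$ yields trigonometric polynomials $R_m:=P_m\circledast Q_m$ with $\|f\circledast g-R_m\|_{\infty}\to 0$; as each $R_m$ is Bohr almost periodic and this class is uniformly closed, $f\circledast g$ is Bohr almost periodic. The point needing care is precisely the uniformity in $x$ of the approximating estimate, which is exactly what the preceding lemma supplies since all of its error terms are controlled by $x$-independent quantities, together with the orthogonality relation for the mean of a character along a van Hove sequence, which I would either cite or verify directly for nontrivial $\chi$.
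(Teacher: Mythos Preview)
Your proposal is correct and follows essentially the same strategy as the paper: exhibit $f\circledast g$ as a uniform limit of the Eberlein convolutions $P\circledast Q$ of approximating trigonometric polynomials, and conclude by the $\|\cdot\|_{\infty}$-closedness of the Bohr almost periodic functions. The only differences are in execution: the paper obtains the uniform bound $\|f\circledast g-P_n\circledast Q_n\|_{\infty}\to 0$ directly from a one-line H\"older estimate $\|f\circledast h\|_{\infty}\leqslant\|f\|_{W^p}\|h\|_{W^q}$ rather than by reusing the detailed computations of the preceding lemma, and it cites a reference for the fact that $P\circledast Q$ is Bohr almost periodic where you compute it explicitly via the orthogonality of characters.
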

\begin{proof}
By definition, there are sequences of trigonometric polynomials $(P_n)_{n\in\N}$ and $(Q_n)_{n\in\N}$ such that
\[
\lim_{n\to\infty} \|f-P_n\|_{W^p} = 0 \qquad \text{ and } \qquad \lim_{n\to\infty} \|g-Q_n\|_{W^q} = 0 \,.
\]
This immediately implies that there is a constant $c>0$ such that $\|Q_n\|_{W^q}\leqslant c$ for all $n\in\N$. Applying the H\"older inequality, we get
\begin{align*}
\|f\circledast g-P_n\circledast Q_n\|_{\infty} 
    &\leqslant \|f\circledast (g- Q_n)\|_{\infty} + \|(f -P_n)\circledast Q_n\|_{\infty} \\
    &\leqslant \|f\|_{W^p}\, \|g-Q_n\|_{W^q} + \|f-P_n\|_{W^p}\, \|Q_n\|_{W^q} \,.
\end{align*}
Thus, the sequence $(P_n\circledast Q_n)_{n\in\N}$ converges uniformly to $f\circledast g$. The claim follows because $P_n\circledast Q_n$ is Bohr almost periodic for all $n\in\N$, see \cite[Thm. 4.6.3]{TAO2}, and uniform limits of Bohr almost periodic functions are Bohr almost periodic \cite[Prop. 4.3.4]{TAO2}.
%
\end{proof}

\section{Fourier--Bohr series}

\begin{definition}
Let $f\in\w(G)$, let $y\in G$, and let $(A_n)_{n\in\N}$ be a van Hove sequence. We define the \emph{Fourier--Bohr coefficients} of $f$ by 
\[
c_{\chi}(f):= M(\overline{\chi}\, f) := \lim_{n\to\infty} \frac{1}{|A_n|} \int_{y+A_n} \overline{\chi(x)}\, f(x)\ \dd x 
\]
if the limit exists.
Moreover, we call the formal sum
\[
\sum_{\chi\in\widehat{G}} c_{\chi}(f)\, \chi
\]
the \emph{Fourier--Bohr series} of $f$. 
\end{definition}

\begin{proposition}
Let $f\in\w(G)$. 
\begin{enumerate}
\item[(a)] The Fourier--Bohr coefficients $c_{\chi}(f)$ exist uniformly in $y$ for all $\chi\in\widehat{G}$. If in addition $f$ is uniformly continuous and bounded, $c_{\chi}(f)$ is also independent of $(A_n)_{n\in\N}$. 
\item[(b)] The set $\{ \chi\in\widehat{G}\ |\ c_{\chi}(f) \neq 0\}$ is at most countable.
\end{enumerate} 
\end{proposition}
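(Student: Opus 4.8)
The plan is to prove both parts by reducing the general Weyl almost periodic case to the already-established facts about trigonometric polynomials and their uniform limits, using the seminorm $\|\cdot\|_{\w}$ as the controlling quantity.

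For part (a), I would first observe that by Corollary~\ref{coro:amenable} the function $\overline{\chi}\,f$ is amenable, which by definition means precisely that the limit defining $c_{\chi}(f)$ exists uniformly in $y$. This is really the content of the first sentence once the earlier machinery is in place, so I would state it and cite Corollary~\ref{coro:amenable}. For the independence of the van Hove sequence under the additional hypotheses, I would invoke Proposition~\cite[Prop. 4.11]{LSS} quoted just above: if $f$ is bounded (and measurable, which follows from uniform continuity), then $f\in\w(G)$ with respect to one van Hove sequence belongs to $Wap^p$ with respect to any other, and the seminorms agree. Since $\overline{\chi}\,f$ is again bounded and Weyl almost periodic, its mean value computed along $\mathcal{A}$ coincides with the one along any $\mathcal{B}$; I would spell out that the mean value $M(\overline{\chi}\,f)$ is determined by the seminorm class, which is sequence-independent.

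For part (b), the key idea is a Bessel-type inequality. The plan is to approximate: given $\eps>0$, choose a trigonometric polynomial $P=\sum_{j=1}^{k} a_j\chi_j$ with $\|f-P\|_{\w}<\eps$. Using the triangle inequality for the seminorm together with the H\"older/Jensen estimate $|M(\overline{\chi}\,h)|\leqslant \|h\|_{W^1}\leqslant\|h\|_{\w}$ (valid via Lemma~\ref{lem:2}(c)-type monotonicity and Proposition~\ref{prop:amenable}), I would show that for any character $\chi$,
\[
|c_{\chi}(f) - c_{\chi}(P)| = |M(\overline{\chi}(f-P))| \leqslant \|f-P\|_{\w} < \eps \,.
\]
But $c_{\chi}(P)=a_j$ if $\chi=\chi_j$ and $c_{\chi}(P)=0$ otherwise, since distinct characters have mean value zero of their product with $\overline{\chi}$. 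Hence $|c_{\chi}(f)|<\eps$ for every $\chi\notin\{\chi_1,\dots,\chi_k\}$. This shows that the set $\{\chi : |c_{\chi}(f)|\geqslant\eps\}$ is finite for each $\eps$, being contained in the finite frequency set of $P$.

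Finally, I would write $\{\chi : c_{\chi}(f)\neq 0\} = \bigcup_{m\in\N}\{\chi : |c_{\chi}(f)|\geqslant \tfrac{1}{m}\}$ as a countable union of finite sets, which is therefore at most countable. The step I expect to require the most care is the orthogonality computation $M(\overline{\chi}\,\chi_j)=\delta_{\chi,\chi_j}$ for characters, together with justifying that $M$ is linear and continuous with respect to $\|\cdot\|_{\w}$ so that the approximation estimate passes to the coefficients; once that continuity of the mean value functional is nailed down, the rest is the standard Bessel-inequality argument.
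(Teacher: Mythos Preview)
Your proposal is correct and follows essentially the same line as the paper: part (a) cites Corollary~\ref{coro:amenable} for existence and uniformity in $y$, and part (b) uses approximation by trigonometric polynomials together with the orthogonality relations $M(\overline{\chi}\,\chi_j)=\delta_{\chi,\chi_j}$ to conclude countability. Your treatment is in fact more explicit than the paper's---your argument for independence of the van Hove sequence via \cite[Prop.~4.11]{LSS} spells out a step the paper's proof leaves unaddressed, and your $\eps$-level set decomposition in (b) makes precise what the paper compresses into ``from this, we can infer the claim.''
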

\begin{proof}
(a) This immediately follows from Corollary~\ref{coro:amenable}. 

\medskip

\noindent (b) By definition, $f$ can be approximated by a sequence $(p_n)_{n\in\N}$ of trigonometric polynomials. For fixed $n\in\N$, the value $c_{\chi}(p_n)$ differs from $0$ only for a finite number of $\chi$, since $M(\chi)=0$ for all non-trivial characters $\chi$. From this, we can infer the claim.
\end{proof}

\begin{corollary}
The Fourier--Bohr series exists for all $f\in\w(G)$.
\end{corollary}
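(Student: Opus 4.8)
The plan is to deduce this directly from the preceding Proposition, since the existence of the Fourier--Bohr series amounts to nothing more than the existence of its coefficients together with the meaningfulness of the formal sum. First I would recall that, by definition, the Fourier--Bohr series of $f$ is the formal sum $\sum_{\chi\in\widehat{G}} c_{\chi}(f)\,\chi$, so that the statement really asks for two things: that each coefficient $c_{\chi}(f)$ be well defined, and that the (a priori uncountable) index set $\widehat{G}$ contribute only finitely or countably many terms, so that the sum is a meaningful formal object.

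Next I would invoke part (a) of the preceding Proposition, which guarantees that for every $f\in\w(G)$ and every character $\chi\in\widehat{G}$ the limit defining $c_{\chi}(f)=M(\overline{\chi}\,f)$ exists (indeed uniformly in the base point $y$); this settles the existence of each individual coefficient. Then I would invoke part (b), which asserts that the set $\{\chi\in\widehat{G}\ |\ c_{\chi}(f)\neq 0\}$ is at most countable; this ensures that the formal sum over $\widehat{G}$ reduces to a countable formal sum and is therefore a well-defined object. Combining the two parts yields the claim at once.

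I do not expect any genuine obstacle here, since the substantive content has already been established in the Proposition. The only point requiring a moment's care is purely a matter of interpretation and bookkeeping: reading the phrase \emph{the Fourier--Bohr series exists} as the conjunction of \emph{all coefficients $c_{\chi}(f)$ exist} and \emph{only countably many of them are nonzero}. Once this is made explicit, the corollary follows immediately from parts (a) and (b), and no further estimate or construction is needed.
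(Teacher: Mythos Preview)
Your proposal is correct and matches the paper's approach: the paper states the corollary without proof, treating it as an immediate consequence of the preceding Proposition, and your reading of ``the Fourier--Bohr series exists'' as the conjunction of parts (a) and (b) is exactly the intended one. No further argument is needed.
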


Now, since $c_{\chi}(f)$ exists for all $\chi\in\widehat{G}$, the next result follows from standard techniques.

\begin{proposition} \label{prop:bessel}
Let $f\in W^2(G)$, and let $\chi_1,\ldots,\chi_n\in\widehat{G}$ be distinct characters. Then, one has
\[
M\bigg(\Big|f-\sum_{j=1}^n c_{\chi_j}(f)\, \chi_j\Big|^2\bigg) = M\big[|f|^2\big] 
-  \sum_{j=1}^n |c_{\chi_j}(f)|^2 \,.
\]
Consequently, $\sum_{j=1}^n |c_{\chi_j}(f)|^2 \leqslant  M\big[|f|^2\big]$. Hence, (we obtain one more time that) the set $\{\chi\in\widehat{G}\ |\ c_{\chi}(f) \neq 0\}$ is at most countable, and one has Bessels inequality
\[
\sum_{j=1}^{\infty} |c_{\chi_j}(f)|^2 \leqslant  M\big[|f|^2\big] < \infty \,.
\]
\end{proposition}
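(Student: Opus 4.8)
The plan is to regard the mean $M$ as a positive sesquilinear pairing $\langle g,h\rangle := M(\overline g\,h)$ on the class of amenable functions, with respect to which the characters form an orthonormal system, and then to run the classical Pythagorean computation underlying Bessel's inequality. Concretely, I would first record the two structural facts that make this work: (a) $M$ is linear and conjugation-compatible, i.e.\ $M(\overline g)=\overline{M(g)}$, on amenable functions (both being inherited from the corresponding properties of the averages $\frac{1}{|A_n|}\int_{y+A_n}$ before passing to the limit); and (b) for characters one has the orthonormality relation $M(\overline{\chi_i}\,\chi_j)=\delta_{ij}$. For (b) note that $\overline{\chi_i}\chi_j$ is again a character, which is trivial exactly when $\chi_i=\chi_j$, in which case $M(1)=1$, while for a nontrivial character the mean vanishes (the fact already used in the proof of the preceding proposition).

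Next I would verify that every function to which $M$ is applied below is genuinely amenable, so that the linear bookkeeping is legitimate. Writing $g:=f-\sum_{j=1}^n c_{\chi_j}(f)\,\chi_j$, the relevant functions are $|f|^2$, the products $\overline{\chi_k}f$ and $\overline f\,\chi_j$, and the characters $\chi_j\overline{\chi_k}$. Here $\overline{\chi_k}f\in W^2(G)$ by closure under multiplication by characters, so that $c_{\chi_k}(f)=M(\overline{\chi_k}f)$ exists; moreover $\overline f\in W^2(G)$ by closure under conjugation, whence $|f|^2=\overline f\,f\in\we(G)$ by H\"older's inequality for Weyl spaces ($\tfrac12+\tfrac12=1$, Proposition~\ref{prop:holder_w}) and is therefore amenable by Proposition~\ref{prop:amenable}; and $\chi_j\overline{\chi_k}$ is Bohr almost periodic, hence amenable.

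With these preparations the computation is routine. Expanding
\[
|g|^2=|f|^2-\sum_k \overline{c_{\chi_k}(f)}\,\overline{\chi_k}f-\sum_j c_{\chi_j}(f)\,\overline f\,\chi_j+\sum_{j,k}c_{\chi_j}(f)\,\overline{c_{\chi_k}(f)}\,\chi_j\overline{\chi_k}
\]
and applying $M$ term by term, I would use $M(\overline{\chi_k}f)=c_{\chi_k}(f)$, the conjugation identity to obtain $M(\overline f\,\chi_j)=\overline{M(\overline{\chi_j}f)}=\overline{c_{\chi_j}(f)}$, and $M(\chi_j\overline{\chi_k})=\delta_{jk}$. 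The three correction terms then each equal $\sum_j|c_{\chi_j}(f)|^2$ (two entering with a minus sign, one from the diagonal with a plus sign), leaving exactly
\[
M\big(|g|^2\big)=M\big[|f|^2\big]-\sum_{j=1}^n|c_{\chi_j}(f)|^2,
\]
which is the claimed identity.

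Finally, since $M(|g|^2)\geqslant 0$ as the mean of a nonnegative function, the identity immediately yields $\sum_{j=1}^n|c_{\chi_j}(f)|^2\leqslant M[|f|^2]$ for every finite family of distinct characters. Taking the supremum over all such families gives $\sum_{\chi}|c_{\chi}(f)|^2\leqslant M[|f|^2]<\infty$; in particular, for each $m\in\N$ only finitely many $\chi$ can satisfy $|c_{\chi}(f)|^2>\tfrac1m$, so $\{\chi\in\widehat{G}\ |\ c_{\chi}(f)\neq0\}$ is a countable union of finite sets and hence at most countable. I expect the only genuine point requiring care to be the amenability bookkeeping of the second paragraph: once every product is known to possess a mean and $M$ is known to be linear and conjugation-compatible, the remainder is the standard orthogonal-projection identity.
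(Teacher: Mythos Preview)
Your argument is correct and is precisely the standard Pythagorean computation for Bessel's inequality; the paper itself does not spell out a proof but simply refers to \cite[p.~226]{TAO2}, where essentially this same argument appears. Your amenability bookkeeping (in particular the use of Proposition~\ref{prop:holder_w} to see $|f|^2\in\we(G)$ and hence that $M[|f|^2]$ exists) is the only point not entirely formal, and you have handled it correctly.
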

\begin{proof}
See \cite[p. 226]{TAO2}.
\end{proof}

To prove the main result of this section, we need some preparation.

\begin{lemma} \label{lem:coeffi}
Let $f\in W^2(G)$, and assume that $-A_n=A_n$ for all $n$. Then, we have 
\[
c_{\chi}(f\circledast\widetilde{f})=|c_{\chi}(f)|^2 \,.
\]
\end{lemma}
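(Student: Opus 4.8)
The plan is to establish the identity first for trigonometric polynomials, where it follows from a direct computation, and then to pass to the limit using the continuity of the Eberlein convolution together with the continuity of the Fourier--Bohr coefficients. Before starting, I would record the structural facts. Under the symmetry hypothesis $-A_n=A_n$ one has $\widetilde f\in W^2(G)$, since conjugation and reflection preserve the $W^2$-seminorm in this case and map trigonometric polynomials to trigonometric polynomials. Consequently the Eberlein convolution $f\circledast\widetilde f$ exists and is Bohr almost periodic by Proposition~\ref{prop:conv_s_ap} applied with $p=q=2$; in particular $c_\chi(f\circledast\widetilde f)$ is well defined.

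For the polynomial case, I would write $P=\sum_{j} a_j\chi_j$ with distinct characters $\chi_j$. Then $\widetilde P=\sum_j \overline{a_j}\,\chi_j$, because $\chi_j(-x)=\overline{\chi_j(x)}$. Using bilinearity of $\circledast$ and the identity $\chi_i(x-z)=\chi_i(x)\,\overline{\chi_i(z)}$, each cross term reduces to
\[
(\chi_i\circledast\chi_j)(x)=\chi_i(x)\,M\big(\overline{\chi_i}\,\chi_j\big).
\]
Since $\overline{\chi_i}\,\chi_j$ is a character which is trivial exactly when $\chi_i=\chi_j$, the fact that $M(\chi)=0$ for every non-trivial character forces $M(\overline{\chi_i}\,\chi_j)=\delta_{ij}$. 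Hence $P\circledast\widetilde P=\sum_i |a_i|^2\chi_i$, and reading off coefficients gives $c_\chi(P\circledast\widetilde P)=|c_\chi(P)|^2$ for every $\chi\in\widehat G$.

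To conclude, I would choose trigonometric polynomials $P_m$ with $\|f-P_m\|_{W^2}\to 0$. On the one hand, $|c_\chi(f)-c_\chi(P_m)|=|c_\chi(f-P_m)|\leqslant \|f-P_m\|_{W^1}\leqslant\|f-P_m\|_{W^2}\to 0$, so $c_\chi(P_m)\to c_\chi(f)$. On the other hand, $\widetilde{P_m}\to\widetilde f$ in $W^2(G)$ (again using $-A_n=A_n$), and the estimate from the proof of Proposition~\ref{prop:conv_s_ap} shows that $P_m\circledast\widetilde{P_m}\to f\circledast\widetilde f$ uniformly; since uniform convergence transfers to the uniformly existing Fourier--Bohr coefficients, $c_\chi(P_m\circledast\widetilde{P_m})\to c_\chi(f\circledast\widetilde f)$. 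Combining the two limits with the polynomial identity yields
\[
c_\chi(f\circledast\widetilde f)=\lim_{m\to\infty} c_\chi(P_m\circledast\widetilde{P_m})=\lim_{m\to\infty}|c_\chi(P_m)|^2=|c_\chi(f)|^2.
\]

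I expect the main obstacle to be the limit passage rather than the polynomial computation: one must verify carefully that $\widetilde{P_m}$ approximates $\widetilde f$ (this is exactly where $-A_n=A_n$ enters) and that uniform convergence of $P_m\circledast\widetilde{P_m}$ transfers to the coefficients. An alternative, more hands-on route avoids approximation: expand $c_\chi(f\circledast\widetilde f)=M_x\big(\overline{\chi(x)}\,(f\circledast\widetilde f)(x)\big)$, split $\overline{\chi(x)}=\overline{\chi(x-z)}\,\overline{\chi(z)}$, and use translation invariance of the mean to factor the double average as $c_\chi(f)\,c_\chi(\widetilde f)$, where $c_\chi(\widetilde f)=\overline{c_\chi(f)}$ follows from $-A_n=A_n$ and $\overline{\chi(-x)}=\chi(x)$. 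The obstacle in that route is instead the Fubini-type interchange of the defining limit of $\circledast$ with the averaging operator $M$, which once more requires the uniform convergence supplied by Proposition~\ref{prop:conv_s_ap}.
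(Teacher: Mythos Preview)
Your argument is correct. The polynomial computation is clean, and the limit passage is justified: the inequality from the proof of Proposition~\ref{prop:conv_s_ap} indeed gives uniform convergence $P_m\circledast\widetilde{P_m}\to f\circledast\widetilde f$, the symmetry $-A_n=A_n$ is exactly what makes $\widetilde{P_m}\to\widetilde f$ in $W^2$, and uniform convergence of Bohr almost periodic functions transfers to their Fourier--Bohr coefficients via $|c_\chi(h)-c_\chi(h_m)|\leqslant\|h-h_m\|_\infty$.

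As for comparison with the paper: the paper does not actually carry out a proof here but simply observes that $\widetilde f\in W^2(G)$ (using $-A_n=A_n$) and then refers to \cite[p.~227]{TAO2}, remarking that the argument given there for weakly almost periodic $f$ goes through under the present weaker hypothesis. Your write-up therefore supplies what the paper outsources; the approximation-by-trigonometric-polynomials route you take is self-contained and makes transparent both where the symmetry assumption enters and why the $W^2$ hypothesis (rather than weak almost periodicity) suffices. The alternative ``hands-on'' factorisation you sketch at the end is closer in spirit to the classical computation one finds in the cited reference.
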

\begin{proof}
Since $-A_n=A_n$, it is easy to see that $\widetilde{f}\in W^2(G)$. Then, the proof of this lemma is given in \cite[p. 227]{TAO2}. The author assumes that $f$ is weakly almost periodic, but the proof reveals that the above assumption suffices.
\end{proof}

\begin{theorem} (Parsevals equality)
Let $f\in W^2(G)$, and assume that $-A_n=A_n$ for all $n$. Then, $c_{\chi}(f)\neq0$ for at most countable many $\chi\in \widehat{G}$, and one has
\[
M\big[|f|^2\big] = \sum_{\chi\in \widehat{G}} |c_{\chi}(f)|^2 \,.
\]
\end{theorem}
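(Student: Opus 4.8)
The plan is to deduce Parseval's equality from the already-established Bessel inequality together with the positivity of the Fourier--Bohr coefficients of the autocorrelation $f\circledast\widetilde{f}$. Proposition~\ref{prop:bessel} already yields the at-most-countability of $\{\chi\in\widehat{G} : c_{\chi}(f)\neq 0\}$ and the inequality $\sum_{\chi} |c_{\chi}(f)|^2 \leqslant M[|f|^2]$, so the entire task reduces to proving the reverse inequality $M[|f|^2]\leqslant \sum_{\chi}|c_{\chi}(f)|^2$.

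The central object is $g:=f\circledast\widetilde{f}$. Since $-A_n=A_n$, one has $\widetilde{f}\in W^2(G)$ (as observed in the proof of Lemma~\ref{lem:coeffi}), so $g$ exists and, by Proposition~\ref{prop:conv_s_ap} applied with $p=q=2$, is a Bohr almost periodic function. Two facts about $g$ drive the argument. First, evaluating the Eberlein convolution at the neutral element and using $\widetilde{f}(y)=\overline{f(-y)}$ gives
\[
g(0)=\lim_{n\to\infty}\frac{1}{\theta_G(A_n)}\int_{A_n}|f(-y)|^2\ \dd\theta_G(y)=\lim_{n\to\infty}\frac{1}{\theta_G(A_n)}\int_{A_n}|f(y)|^2\ \dd\theta_G(y)=M[|f|^2],
\]
where the middle step substitutes $y\mapsto -y$ and uses the inversion-invariance of the Haar measure together with $-A_n=A_n$. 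Second, by Lemma~\ref{lem:coeffi} we have $c_{\chi}(g)=|c_{\chi}(f)|^2\geqslant 0$ for every $\chi\in\widehat{G}$.

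It therefore suffices to show that $g(0)=\sum_{\chi}c_{\chi}(g)$ for a Bohr almost periodic $g$ with non-negative coefficients. Here I would invoke the classical Bochner--Fej\'er theory: $g$ is the uniform limit of Bochner--Fej\'er polynomials $\sigma_N(g)=\sum_{\chi}k_{N,\chi}\,c_{\chi}(g)\,\chi$ with $0\leqslant k_{N,\chi}\leqslant 1$ and $k_{N,\chi}\to 1$ for each $\chi$. Evaluating at $0$ gives $\sigma_N(g)(0)=\sum_{\chi}k_{N,\chi}\,c_{\chi}(g)\to g(0)$. Since $c_{\chi}(g)\geqslant 0$, Fatou's lemma for sums yields $\sum_{\chi}c_{\chi}(g)\leqslant\liminf_{N}\sigma_N(g)(0)=g(0)$, while $k_{N,\chi}\leqslant 1$ gives $\sigma_N(g)(0)\leqslant\sum_{\chi}c_{\chi}(g)$ and hence $g(0)\leqslant\sum_{\chi}c_{\chi}(g)$. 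Combining, $g(0)=\sum_{\chi}c_{\chi}(g)$, so that
\[
M[|f|^2]=g(0)=\sum_{\chi}c_{\chi}(g)=\sum_{\chi}|c_{\chi}(f)|^2,
\]
which is the desired equality; in particular the right-hand side is finite, reconfirming countability.

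The main obstacle is the identity $g(0)=\sum_{\chi}c_{\chi}(g)$: pointwise reconstruction of a Bohr almost periodic function from its Fourier--Bohr series is false in general, and what rescues it here is precisely the non-negativity $c_{\chi}(g)=|c_{\chi}(f)|^2\geqslant 0$ coming from Lemma~\ref{lem:coeffi}, which turns the Bochner--Fej\'er limit into a genuine monotone/Fatou argument. A secondary point requiring care is the change of variables $y\mapsto -y$ in the computation of $g(0)$, which is exactly where the hypothesis $-A_n=A_n$ together with the inversion-invariance of Haar measure is used.
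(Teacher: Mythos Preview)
Your argument is correct and follows the same skeleton as the paper's: set $g=f\circledast\widetilde{f}$, use Proposition~\ref{prop:conv_s_ap} to see that $g$ is Bohr almost periodic, use Lemma~\ref{lem:coeffi} for $c_\chi(g)=|c_\chi(f)|^2$, compute $g(0)=M[|f|^2]$ via the substitution $y\mapsto -y$ together with $-A_n=A_n$, and then identify $g(0)$ with $\sum_\chi c_\chi(g)$.

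The one genuine difference is in that last identification. The paper notes that Bessel's inequality (Proposition~\ref{prop:bessel}) already forces $\sum_\chi |c_\chi(f)|^2<\infty$, so the Fourier--Bohr coefficients of $g$ are absolutely summable; it then invokes the standard result \cite[Lem.~4.6.11]{TAO2} that a Bohr almost periodic function with absolutely summable coefficients is the uniform limit of its Fourier--Bohr series, and evaluates at $0$. You instead run a Bochner--Fej\'er/Fatou argument that leans on the non-negativity $c_\chi(g)\geqslant 0$. Both routes are valid. The paper's is shorter because it simply recycles Bessel for the needed summability and cites a black-box lemma; yours is more self-contained and makes transparent that it is precisely the positive-definiteness of the autocorrelation that upgrades Bessel to Parseval, without needing to know in advance that $\sum_\chi c_\chi(g)$ converges.
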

\begin{proof}
Proposition~\ref{prop:conv_s_ap} tells us that $f\circledast\widetilde{f}$ is a Bohr almost periodic function. Moreover, its Fourier--Bohr coefficients are given by $|c_{\chi}(f)|^2$, see Lemma~\ref{lem:coeffi}. By Bessel's inequality (see Proposition~\ref{prop:bessel}), the series $\sum_{\chi\in \widehat{G}} |c_{\chi}(f)|^2$ converges absoultely. Hence, the Fourier--Bohr series of $f\circledast \widetilde{f}$ converges uniformly, and $f\circledast \widetilde{f}$ is its limit \cite[Lem. 4.6.11]{TAO2}. Therefore, we obtain
\[
M\big[|f|^2\big] = (f\circledast \widetilde{f})(0) = \sum_{\chi\in \widehat{G}} c_{\chi}(f\circledast\widetilde{f})\, \chi(0) = \sum_{\chi\in \widehat{G}} |c_{\chi}(f)|^2  \,. \qedhere
\]
\end{proof}

We complete the section with a uniqueness result.

\begin{theorem} \label{thm:unique}
Let $f,g\in\s(G)$ such that 
\[
c_{\chi}(f) = c_{\chi}(g) \qquad \text{ for all } \chi\in\widehat{G}. 
\]
Then, $f$ and $g$ coincide $\theta_G$-almost everywhere.
\end{theorem}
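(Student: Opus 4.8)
The plan is to reduce to a single function and then exploit the mollification already used in the proof of Proposition~\ref{prop:main1}. Put $h:=f-g$. Since $\s(G)$ is a vector space and the Fourier--Bohr functionals $f\mapsto c_{\chi}(f)$ are linear, the hypothesis gives $h\in\s(G)$ with $c_{\chi}(h)=0$ for every $\chi\in\widehat G$. Because $\|\cdot\|_{S^p}$ is a genuine norm on $\s(G)$ (so that $\|h\|_{S^p}=0$ forces $h=0$ almost everywhere), it suffices to prove $\|h\|_{S^p}=0$.

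For $\eta>0$ I would consider the averaged function
\[
h_{\eta}(x):=\frac{1}{\theta_G(B_{\eta}(0))}\int_{B_{\eta}(0)} h(x+z)\ \dd\theta_G(z)\,,
\]
which, by steps (ii) and (iii) of the proof of Proposition~\ref{prop:main1}, is Bohr almost periodic and satisfies $\lim_{\eta\to0}\|h-h_{\eta}\|_{S^p}=0$. The next step is to compute its Fourier--Bohr coefficients. Interchanging the mean (the limit along $(A_n)_{n\in\N}$) with the integral over $B_{\eta}(0)$ and substituting $u=x+z$, using $\overline{\chi(-z)}=\chi(z)$, yields
\[
c_{\chi}(h_{\eta})=c_{\chi}(h)\cdot\frac{1}{\theta_G(B_{\eta}(0))}\int_{B_{\eta}(0)}\chi(z)\ \dd\theta_G(z)\,.
\]
Since $c_{\chi}(h)=0$ for all $\chi$, every Fourier--Bohr coefficient of $h_{\eta}$ vanishes.

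Now $h_{\eta}$ is a \emph{Bohr} almost periodic function with vanishing Fourier--Bohr series. By the classical uniqueness theorem for Bohr almost periodic functions this forces $h_{\eta}\equiv0$; alternatively, staying within the present framework, one may invoke Parseval's equality to obtain $M(|h_{\eta}|^2)=\sum_{\chi}|c_{\chi}(h_{\eta})|^2=0$ and then note that a continuous almost periodic function of vanishing quadratic mean must vanish identically (a nonzero value would, by almost periodicity, recur on a set of positive density and render the mean strictly positive). With $h_{\eta}\equiv0$ for every $\eta>0$, letting $\eta\to0$ in $\|h-h_{\eta}\|_{S^p}\to0$ gives $\|h\|_{S^p}=0$, whence $h=0$ almost everywhere, i.e. $f=g$ $\theta_G$-almost everywhere.

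The continuity and almost periodicity of $h_{\eta}$, as well as the final passage to the limit, are routine given the earlier results. I expect the main obstacle to be the justification of the coefficient identity for $c_{\chi}(h_{\eta})$: one must interchange the van Hove limit with the mollifying integral and use translation invariance of the mean, which relies precisely on the \emph{uniform}-in-$y$ existence of the Fourier--Bohr mean (Corollary~\ref{coro:amenable}, via Proposition~\ref{prop:amenable}) rather than on mere pointwise existence, and on the value being independent of the translate $z+A_n$.
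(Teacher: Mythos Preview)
Your argument is correct and takes a genuinely different route from the paper. The paper lifts the problem to measures: it shows $f\,\theta_G$ and $g\,\theta_G$ are strongly almost periodic (Proposition~\ref{prop:ac}), observes that the vanishing Fourier--Bohr coefficients force $f\,\theta_G-g\,\theta_G$ to be null weakly almost periodic by \cite[Thm.~8.1]{ARMA}, and then invokes \cite[Prop.~5.7]{ARMA} to conclude that a measure which is simultaneously strongly and null weakly almost periodic must vanish. Your approach stays entirely at the level of functions: you mollify $h=f-g$ to the Bohr almost periodic $h_{\eta}$, transfer the vanishing of coefficients via the identity $c_{\chi}(h_{\eta})=c_{\chi}(h)\cdot\theta_G(B_{\eta}(0))^{-1}\int_{B_{\eta}(0)}\chi\,\dd\theta_G$, and then appeal to uniqueness for Bohr almost periodic functions before passing to the limit $\eta\to0$ in the Stepanov norm. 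The paper's proof is shorter once the measure machinery from \cite{ARMA} is in place, but yours is more self-contained relative to the present paper---it reuses only the mollification from Proposition~\ref{prop:main1} and the classical Bohr uniqueness theorem, avoiding the detour through almost periodic measures. Your identification of the delicate point is also accurate: the interchange of the van Hove limit with $\int_{B_{\eta}(0)}$ is exactly where the \emph{uniformity} in the translate parameter furnished by Proposition~\ref{prop:amenable} (via Corollary~\ref{coro:amenable}) is needed, and with that in hand the step goes through.
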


A proof of this theorem will be given in the appendix.

\section{Relations to other notions of almost periodic functions}

So far, we have only considered Stepanov and Weyl almost periodic functions. However, there are other notions of almost periodic functions. In the following, we want to see how they are connected. 

The space of Bohr almost periodic functions is denoted by $\text{SAP}(G)$.

\begin{proposition}  \label{prop:s_apcu}
One has $\s(G)\cap \Cu(G) = \text{SAP}(G)$.
\end{proposition}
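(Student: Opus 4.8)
The plan is to prove the two inclusions separately; the inclusion $\text{SAP}(G)\subseteq\s(G)\cap\Cu(G)$ is routine, while the reverse inclusion carries the real content, namely upgrading an averaged ($S^p$) control to a uniform (sup-norm) control by means of uniform continuity.

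For $\text{SAP}(G)\subseteq\s(G)\cap\Cu(G)$: recall that every Bohr almost periodic function is bounded and uniformly continuous, so it lies in $\Cu(G)$. Moreover, since averaging $|\cdot|^p$ over a translate $y+K$ and taking $p$-th roots can only decrease the supremum, one has $\|g\|_{S^p}\leqslant\|g\|_{\infty}$ for every $g$. Applying this to $g=f-T_tf$ gives $\|f-T_tf\|_{S^p}\leqslant\|f-T_tf\|_{\infty}$, so every sup-norm $\eps$-almost period of $f$ is also a Stepanov $\eps$-almost period. As the former are relatively dense for $f\in\text{SAP}(G)$, so are the latter, whence $f\in\s(G)$.

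For the reverse inclusion, let $f\in\s(G)\cap\Cu(G)$ and fix $\eps>0$. First I would use uniform continuity of $f$ to choose a compact symmetric neighbourhood $K$ of $0$ with non-empty interior such that $|f(x)-f(x')|<\frac{\eps}{4}$ whenever $x-x'\in K$. By Lemma~\ref{lem:equiv_norms} we may take this particular $K$ in the Stepanov norm, so that, because $f\in\s(G)$, the set $R:=\{t\in G:\|f-T_tf\|_{S_K^p}<\frac{\eps}{2}\}$ is relatively dense. The key step is then a pointwise estimate: fix $t\in R$ and $x_0\in G$ and put $M:=|f(x_0)-f(x_0-t)|$. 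For every $x\in x_0+K$ one has $x-x_0\in K$ and $(x-t)-(x_0-t)=x-x_0\in K$, so uniform continuity gives $|f(x)-f(x_0)|<\frac{\eps}{4}$ and $|f(x-t)-f(x_0-t)|<\frac{\eps}{4}$, and the triangle inequality yields $|f(x)-f(x-t)|>M-\frac{\eps}{2}$ throughout $x_0+K$. If we had $M\geqslant\eps$, this would force $|f(x)-f(x-t)|>\frac{\eps}{2}$ on all of $x_0+K$, and hence
\[
\Big(\frac{1}{\theta_G(K)}\int_{x_0+K}|f(x)-f(x-t)|^p\ \dd\theta_G(x)\Big)^{\frac{1}{p}} > \frac{\eps}{2} \,,
\]
so that $\|f-T_tf\|_{S_K^p}>\frac{\eps}{2}$, contradicting $t\in R$. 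Therefore $M<\eps$ for every $x_0$, i.e. $\|f-T_tf\|_{\infty}\leqslant\eps$. Thus every $t\in R$ is a sup-norm $\eps$-almost period, and since $R$ is relatively dense, the sup-norm $\eps$-almost periods of $f$ are relatively dense for each $\eps>0$. As $f$ is continuous, this shows $f\in\text{SAP}(G)$.

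The main obstacle is precisely this passage from the averaged estimate to the pointwise one: uniform continuity is exactly what guarantees that a large value of $|f-T_tf|$ at a single point $x_0$ persists (up to $\frac{\eps}{2}$) over the whole translate $x_0+K$ of fixed positive measure, so that a small $S^p$-average is incompatible with a large pointwise value. Everything else is a matter of bookkeeping with the triangle inequality and the relative denseness of almost periods.
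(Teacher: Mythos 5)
Your proof is correct and follows essentially the same route as the paper: the easy inclusion via $\|g\|_{S^p}\leqslant\|g\|_{\infty}$, and the substantive one by showing that a pointwise deviation $\geqslant\eps$ would, by uniform continuity, persist over a translate of a fixed neighbourhood of positive measure and thus contradict the smallness of the $S^p$-average. The only cosmetic difference is that you invoke Lemma~\ref{lem:equiv_norms} to average over the neighbourhood furnished by uniform continuity itself, whereas the paper keeps a fixed $K$ and absorbs the measure ratio $(\theta_G(V)/\theta_G(K))^{1/p}$ into the threshold for the Stepanov almost periods.
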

\begin{proof}
$\subseteq$: Since $f$ is uniformly continuous, for every $\eps>0$, there is a neighbourhood $V$ of $0$ such that 
\begin{equation} \label{eq:2}
|f(x')-f(x'')| < \frac{\eps}{4} \,,
\end{equation}
whenever $x'-x''\in V$. 
Next, let $t\in G$ be such that 
\begin{equation} \label{eq:1}
\| f-T_tf\|_{S^p} < \left(\frac{\theta_G(V)}{\theta_G(K)}\right)^{\frac{1}{p}}\, \frac{\eps}{2}\,.
\end{equation}
Now, we claim that
\[
\|f-T_tf\|_{\infty} < \eps \,.
\]
Assume on the contrary that this statement is not true, i.e. there is an element $x_0\in G$ such that $|f(x_0)-T_tf(x_0)|>\eps$. Then, Eq.~\eqref{eq:2} implies
\[
|f(x)-T_tf(x)| > \frac{\eps}{2},
\]
as long as $x-x_0\in V$. Consequently, one has
\begin{align*}
\left(\frac{1}{\theta_G(K)}\int_{x_0+K} |f(x)-T_tf(x)|^p\ \dd \theta_G(x)
    \right)^{\frac{1}{p}}
    &\geqslant \left(\frac{1}{\theta_G(K)}\int_{x_0+V} |f(x)-T_tf(x)|^p\ \dd
      \theta_G(x) \right)^{\frac{1}{p}}  \\
    &> \frac{1}{\theta_G(K)^{\frac{1}{p}}}\, \frac{\eps}{2}\,
      \theta_G(x_0+V)^{\frac{1}{p}} \\
    &= \left(\frac{\theta_G(V)}{\theta_G(K)}\right)^{\frac{1}{p}}\, 
       \frac{\eps}{2}\,,
\end{align*}
which contradicts Eq.~\eqref{eq:1}.

\medskip

\noindent $\supseteq$: On the one hand, every weakly almost periodic function (in particular, every Bohr almost periodic function) is uniformly continuous, see \cite[Thm. 13.1]{Eb}.

On the other hand, we have
\[
\|f-T_tf\|_{S^p} = \sup_{y\in G}\left( \frac{1}{\theta_G(K)}\int_{y+K} |f(x)-T_tf(x)|^p\ \dd \theta_G(x) \right)^{\frac{1}{p}} \leqslant \|f -T_tf\|_{\infty}\,,
\]
which implies $\text{SAP}(G) \subseteq \s(G)$.
\end{proof}

\begin{remark} \label{rem:ex}
It is not possible to replace the uniform continuity by mere continuity. An example of a function which is continuous and Stepanov almost periodic but not Bohr almost periodic is given by
\[
f:\R \to \R,\quad \quad x\mapsto \sin\left( \frac{1}{2+\cos(\alpha x)+\cos(\beta x)} \right) \,,
\]
where $\alpha,\beta\in\R$ such that $\alpha,\beta$ and $\alpha\beta^{-1}$ are irrational, see \cite{lev}.
\end{remark}

\begin{definition}
A function $f \in C_{\text{u}}(G)$ is called \emph{weakly almost periodic} if the closure of $\{T_tf\ |\ t\in G\}$ is compact in the weak topology. The space of all weakly almost periodic functions is denoted by $\text{WAP}(G)$.
\end{definition}

\begin{definition}
Let $f\in \Cu(G)$ and let $\mc A=  (A_n)_{n\in\N}$ be a van Hove sequence. We define the \emph{upper absolute mean} of $f$ along $\mc A$ by
\[
\overline{M}_{\mc A}(f) := \limsup_{n\to\infty} \frac{1}{|A_n|} \int_{A_n} |f(x)|\ \dd x.
\]
Whenever the van Hove sequence $\mc A$ is clear from the context, we will write $\overline{M}(f)$ instead of $\overline{M}_{\mc A}(f)$.
\end{definition}

\begin{definition}
Fix a van Hove sequence $\mc A=(A_n)_{n\in\N}$. A function $f\in C_{\text{u}}(G)$ is called \emph{mean almost periodic} (with respect to $\mc A$) if, for all $\varepsilon>0$, the set
\[
\{t\in G\ |\ \overline{M}_{\mc A}(T_{t}f-f)<\varepsilon\}
\]
is relatively dense in $G$. The space of all mean almost periodic functions is denoted by $\text{MAP}(G)$.
\end{definition}

\begin{proposition}
We have the following chain of inclusions:
\[
\text{SAP}(G) \subseteq \se(G) \subseteq \we(G) \subseteq \text{MAP}(G) \,.
\]
\end{proposition}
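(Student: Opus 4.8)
The plan is to prove the three inclusions one at a time, leaning on the machinery already developed; the first two are immediate specialisations to $p=1$ of earlier results, and only the last inclusion carries genuine analytic content.

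For $\text{SAP}(G)\subseteq\se(G)$ I would invoke Proposition~\ref{prop:s_apcu} with $p=1$, which reads $\se(G)\cap\Cu(G)=\text{SAP}(G)$ and so gives $\text{SAP}(G)\subseteq\se(G)$ at once. Equivalently, one can argue directly from the pointwise estimate $\|f-T_tf\|_{\se}\leqslant\|f-T_tf\|_{\infty}$: every sup-norm $\eps$-almost period of a Bohr almost periodic function is a Stepanov $1$-almost period, so the relatively dense set witnessing Bohr almost periodicity is contained in the one witnessing Stepanov almost periodicity. For $\se(G)\subseteq\we(G)$ I would simply cite Proposition~\ref{prop:w<s} with $p=1$, which yields $\se(G)\subseteq\we(G)$ together with the quantitative bound $\|f\|_{\we}\leqslant\|f\|_{S_C^1}$.

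The remaining inclusion $\we(G)\subseteq\text{MAP}(G)$ is where the work lies, and I would base it on the $\eps$-almost-period characterisation of Proposition~\ref{prop:weap_eps_char}. Given $f\in\we(G)$ and $\eps>0$, that proposition supplies a relatively dense set $R$ and an $N\in\N$ with $\|f-T_tf\|_{S_{A_n}^1}<\eps/2$ for all $n\geqslant N$ and all $t\in R$. The key (elementary) observation is the norm comparison obtained by discarding the supremum over $y$ and keeping only $y=0$, namely
\[
\overline{M}_{\mc A}(T_tf-f)=\limsup_{n\to\infty}\frac{1}{|A_n|}\int_{A_n}|f(x)-T_tf(x)|\ \dd x\leqslant\limsup_{n\to\infty}\|f-T_tf\|_{S_{A_n}^1}.
\]
Combining the two facts gives $\overline{M}_{\mc A}(T_tf-f)\leqslant\eps/2<\eps$ for every $t\in R$, so $R\subseteq\{t\in G\mid\overline{M}_{\mc A}(T_tf-f)<\eps\}$; the latter set is therefore relatively dense, which is exactly the defining property of $\text{MAP}(G)$.

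The main point to be careful about is definitional rather than analytic: $\text{MAP}(G)$ is defined only for $f\in\Cu(G)$, whereas a general element of $\we(G)$ need not be continuous, so the inclusion must be read on $\Cu(G)$ (that is, for uniformly continuous $f\in\we(G)$, to which the argument applies verbatim). Apart from this bookkeeping, the only subtlety is the passage from the uniform bound over $n\geqslant N$ in Proposition~\ref{prop:weap_eps_char} to the $\limsup$ appearing in $\overline{M}_{\mc A}$, which is harmless since a uniform bound for all large $n$ controls the $\limsup$; I therefore do not expect any essential obstacle beyond assembling these three steps.
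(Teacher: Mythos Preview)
Your proposal is correct and matches the paper's own proof in all three steps: the paper cites Proposition~\ref{prop:s_apcu} for the first inclusion, Proposition~\ref{prop:w<s} for the second, and Remark~\ref{rem:w_ap} for the third---the latter being precisely the observation that the uniform-in-$n$ characterisation of Proposition~\ref{prop:weap_eps_char} (property (I) in that remark) is stronger than the $\limsup$-type condition (II), which after dropping the $\sup_{y}$ is exactly the $\text{MAP}$ requirement. Your explicit norm comparison and your remark on the $\Cu(G)$ bookkeeping simply spell out what the paper leaves implicit.
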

\begin{proof}
The first inclusion follows from Proposition~\ref{prop:s_apcu}, the second inclusion follows from Proposition~\ref{prop:w<s}, and the third inclusion follows from Remark~\ref{rem:w_ap}.
\end{proof}

\begin{definition}
A function $f \in \text{WAP}(G)$ is called \emph{null weakly almost periodic} if $\overline{M}(f) = 0$. The set of all null weakly almost periodic functions is denoted by $\text{WAP}_{0}(G)$.
\end{definition}

\begin{proposition} \label{prop:inclusion}
We also have:
\[
\text{SAP}(G) \subseteq \text{WAP}(G)\subseteq \we(G) \subseteq \text{MAP}(G) \,.
\]
\end{proposition}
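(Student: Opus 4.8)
The plan is to establish the three inclusions
\[
\text{SAP}(G) \subseteq \text{WAP}(G) \subseteq \we(G) \subseteq \text{MAP}(G)
\]
one at a time, reusing as much of the earlier machinery as possible. The first and last inclusions should follow almost immediately from facts already in hand, so the genuine content lies entirely in the middle inclusion $\text{WAP}(G) \subseteq \we(G)$.

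For the first inclusion $\text{SAP}(G) \subseteq \text{WAP}(G)$, I would simply invoke the definition of weak almost periodicity: every Bohr almost periodic function has the property that $\{T_tf \mid t\in G\}$ is precompact in the \emph{norm} topology of $\Cu(G)$, hence a fortiori precompact in the weak topology, so $f\in\text{WAP}(G)$. This is a standard fact and needs at most one sentence. For the third inclusion $\we(G)\subseteq\text{MAP}(G)$, I would quote the already-proved chain: by Proposition~\ref{prop:weap_eps_char}, every $f\in\we(G)$ satisfies property~(I) of Remark~\ref{rem:w_ap}, which is stronger than property~(II), and property~(II) says precisely that for each $\eps>0$ the set $\{t\mid \lim_{n\to\infty}\|f-T_tf\|_{S^p_{A_n}}<\eps\}$ is relatively dense. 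Since $\uM(T_tf-f)\le \|f-T_tf\|_{W^1}$ (absolute mean is controlled by the $W^p$-seminorm via Jensen when $p\ge 1$), relative density of Weyl $\eps$-almost periods forces relative density of mean $\eps$-almost periods, giving $f\in\text{MAP}(G)$. Here one should be slightly careful that the ambient continuity hypotheses match, since $\text{MAP}(G)$ is defined inside $\Cu(G)$; I would restrict attention to $\we(G)\cap\Cu(G)$ or note that the relevant functions here are uniformly continuous.

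The main obstacle is the inclusion $\text{WAP}(G)\subseteq\we(G)$. The natural route is the classical decomposition of weakly almost periodic functions: by the Eberlein/de~Leeuw--Glicksberg theory, every $f\in\text{WAP}(G)$ decomposes uniquely as $f = f_{\mathrm{ap}} + f_0$, where $f_{\mathrm{ap}}\in\text{SAP}(G)$ is Bohr almost periodic and $f_0\in\text{WAP}_0(G)$ is null weakly almost periodic, i.e.\ $\uM(|f_0|)=0$. I would then argue as follows: the Bohr part $f_{\mathrm{ap}}$ is a uniform limit of trigonometric polynomials and hence lies in $\we(G)$; and the null part $f_0$ satisfies $\|f_0\|_{\we}=0$, because for a bounded uniformly continuous function the Weyl seminorm with $p$ is controlled by the upper absolute mean (for bounded $f_0$ one has $\|f_0\|_{W^p}^p = \uM(|f_0|^p)\le \|f_0\|_\infty^{p-1}\,\uM(|f_0|)=0$, using that the sup over $y$ can be absorbed by the translation-invariance/uniform continuity of the mean on $\text{WAP}$). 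Thus $f = f_{\mathrm{ap}} + f_0$ with $f_{\mathrm{ap}}\in\we(G)$ and $\|f_0\|_{\we}=0$, so $f\in\we(G)$ since $\we(G)$ is closed under the addition of seminorm-null functions.

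The delicate point I expect to fight with is justifying that $\|f_0\|_{\we}=0$ \emph{uniformly in the translate} $y$, since $\we$ involves $\sup_{y\in G}$ inside the Weyl seminorm, whereas the definition of $\text{WAP}_0$ via $\uM$ only averages over $A_n$ based at the identity. For bounded functions this gap is bridged by Proposition~\ref{prop:prod_weap} (independence of the van Hove sequence, hence of the basepoint via translation of the sequence) together with the domination $|f_0|^p\le\|f_0\|_\infty^{p-1}|f_0|$, but one must check that the null property $\uM(|f_0|)=0$ upgrades to $\sup_y$ form. I would handle this by first reducing to bounded $f_0$ (a WAP function is automatically bounded) and then appealing to the translation-boundedness and sequence-independence results already established. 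Once that uniformity is secured, the decomposition argument closes the inclusion and the proposition follows.
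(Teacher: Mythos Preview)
Your approach is correct and essentially the same as the paper's. Both rely on the Eberlein decomposition $f=f_{\mathrm{s}}+f_0$ with $f_{\mathrm{s}}\in\text{SAP}(G)$ and $f_0\in\text{WAP}_0(G)$, approximate $f_{\mathrm{s}}$ uniformly by trigonometric polynomials $p_n$, and use that $f_0$ has vanishing Weyl seminorm; the paper simply compresses this into the single chain $\|f-p_n\|_{W^1}=\overline{M}(f-p_n)\leqslant\overline{M}(f_{\mathrm{s}}-p_n)+\overline{M}(f_0)\leqslant\|f_{\mathrm{s}}-p_n\|_\infty$, tacitly using the uniformity-in-$y$ of the mean on $\text{WAP}(G)$ that you take the trouble to flag and justify.
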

\begin{proof}
The first inclusion is trivial. 

For the second inclusion, let $f\in \text{WAP}(G)$. In that case, we can write 
\[
f=f_{\text{s}} + f_0
\] 
with $f_{\text{s}}\in\text{SAP}(G)$ and $f_0\in\text{WAP}_0(G)$, see \cite[Thm. 4.7.11]{TAO2}. Since $f_{\text{s}}\in\text{SAP}(G)$, for every $n\in\N$, there is a trigonometric polynomial $p_n$ such that
\[
\|f_{\text{s}} - p_n\|_{\infty} < \frac{1}{n} \,.
\] 
Therefore, one obtains
\[
\|f-p_n\|_{W^1} = \overline{M}(f-p_n) \leqslant \overline{M}(f_{\text{s}}-p_n) + \overline{M}(f_0) \leqslant \|f_{\text{s}}-p_n\|_{\infty} < \frac{1}{n} \,,
\]
which implies $f\in \we(G)$.

The last inclusion was proved in the previous proposition.
\end{proof}

One should note that there is no subset relation between $\text{WAP}(G)$ and $\se(G)$. The example from Remark~\ref{rem:ex} is Stepanov almost periodic but not weakly almost periodic (because it is not uniformly continuous). On the other hand, every function vanishing at infinity is weakly almost periodic but not Stepanov almost periodic (unless it is identical zero). To be more precise, one has 
\begin{align*}
\text{WAP}(G) \, \cap\, S^1(G) &= \text{SAP}(G) \,,  \\
\text{WAP}_0(G) \, \cap\, S^1(G) &= \{0\} \,. 
\end{align*}

\begin{remark}
The limit 
\[
\lim_{n\to\infty} \frac{1}{\theta_G(A_n)}\int_{A_n} f(x)\ \dd\theta_G(x)
\]
is in general not independent of $(A_n)_{n\in\N}$ if $f\in MAP(G)$. For this, consider the function
\[
h(x) =
\begin{cases}
0,& x<0,  \\
x, & 0\leqslant x \leqslant 1, \\
1, & x>1.
\end{cases}
\]
It is straightforward to check that $h\in\text{MAP}(G)$. On the other side, $h$ is not independent of $(A_n)_{n\in\N}$ because
\[
\frac{1}{n} \int_{0}^n f(x)\ \dd x =1 \qquad \text{ and } \qquad 
\frac{1}{n} \int_{-n}^{0} f(x)\ \dd x =0 \,.
\]
\end{remark}

\begin{remark}
If $f$ is a weakly almost periodic function, it is Weyl almost periodic by Proposition~\ref{prop:inclusion}. Moreover, there is a unique decomposition 
\[
f = f_{\text{s}} + f_0  \,,
\] 
where $f_{\text{s}}$ is a Bohr almost periodic function, and $f_0$ is a null weakly almost periodic function. We trivially have $\|f_0\|_{W^p}=0$, which implies
\[
\|f\|_{W^p} = \|f_{\text{s}}\|_{W^p} \,.
\]
So, the seminorm $\|\cdot\|_{W^p}$ can only see the strongly almost periodic component of a weakly almost periodic function.
\end{remark}

\appendix  
\section{}

In this last section, we will have a look at Stepanov almost periodic measures.

\begin{proposition}  \label{prop:sap=stap}
Let $\mu\in\cM^{\infty}(G)$, and let $1\leq p<\infty$. Then, $\mu\in\SAP(G)$ if and only if $\mu*f\in \s(G)$ for all $f\in \Cc(G)$.
\end{proposition}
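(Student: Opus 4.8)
The plan is to reduce the statement to Proposition~\ref{prop:s_apcu}, which identifies $\s(G)\cap\Cu(G)$ with $\text{SAP}(G)$. Recall that $\mu\in\SAP(G)$ means, by definition, that $\mu*f\in\text{SAP}(G)$ (the space of Bohr almost periodic functions) for every $f\in\Cc(G)$. Hence the whole statement amounts to showing that, for $\mu\in\cM^{\infty}(G)$ and $f\in\Cc(G)$, the function $\mu*f$ is Bohr almost periodic if and only if it is Stepanov $p$-almost periodic. The bridge between the two notions will be the fact that $\mu*f$ is automatically uniformly continuous.

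The key observation I would establish first is that $\mu*f$ is bounded and uniformly continuous for every $\mu\in\cM^{\infty}(G)$ and $f\in\Cc(G)$. Choosing a precompact neighbourhood $V$ of $0$ and writing, for $s\in V$,
\[
(\mu*f)(x)-(\mu*f)(x+s) = \int_G \big(f-T_{-s}f\big)(x-y)\ \dd\mu(y)\,,
\]
one notes that the integrand is supported, as a function of $y$, in $x-K$, where $K:=\supp f\cup(\supp f-\overline{V})$ is one fixed compact set, independent of $x$ and of $s\in V$. Translation boundedness of $\mu$ then yields
\[
|(\mu*f)(x)-(\mu*f)(x+s)| \leqslant \|f-T_{-s}f\|_{\infty}\,\|\mu\|_{K}
\]
uniformly in $x$, and the right-hand side tends to $0$ as $s\to0$ because $f\in\Cc(G)$ is uniformly continuous. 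Boundedness follows by the same support argument. In particular, $\mu*f\in\Cu(G)$.

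With this in hand the two implications are immediate. If $\mu\in\SAP(G)$, then $\mu*f\in\text{SAP}(G)\subseteq\s(G)$ for all $f\in\Cc(G)$ by Proposition~\ref{prop:s_apcu}, which gives the forward direction. Conversely, if $\mu*f\in\s(G)$ for all $f\in\Cc(G)$, then, since $\mu*f\in\Cu(G)$, we have $\mu*f\in\s(G)\cap\Cu(G)=\text{SAP}(G)$ by Proposition~\ref{prop:s_apcu}; hence $\mu\in\SAP(G)$.

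The only genuine work lies in the uniform continuity of $\mu*f$, where translation boundedness of $\mu$ is essential: without it the defining integral need not be controlled uniformly in $x$, and the estimate above would break down. Everything else is a formal consequence of the definition of $\SAP(G)$ and of Proposition~\ref{prop:s_apcu}. In particular, no new estimate in the Stepanov norm is required; it is precisely the uniform continuity that collapses the Stepanov and the Bohr notions into one another on the range of the convolution map $f\mapsto\mu*f$.
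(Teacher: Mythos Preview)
Your proposal is correct and follows essentially the same route as the paper: both directions are reduced to Proposition~\ref{prop:s_apcu} via the observation that $\mu*f\in\Cu(G)$ whenever $\mu\in\cM^{\infty}(G)$ and $f\in\Cc(G)$. The paper simply asserts this uniform continuity as a consequence of translation boundedness, whereas you spell out the estimate explicitly; apart from that added detail, the arguments coincide.
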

\begin{proof}
$\Longrightarrow$: If $\mu\in\SAP(G)$, we have $\mu*f\in\text{SAP}(G)$ for all $f\in \Cc(G)$. Since every Bohr almost periodic function is stepanov almost periodic, $\mu*f\in \s(G)$ for all $f\in \Cc(G)$.

\medskip

\noindent $\Longleftarrow$: By assumption, $\mu*f\in \s(G)$ for all $f\in \Cc(G)$. Furthermore, we have $\mu*f\in\Cu(G)$, for all $f\in\Cc(G)$, because $\mu\in\cM^{\infty}(G)$. Now, Proposition~\ref{prop:s_apcu} implies the claim. 
\end{proof}

\begin{definition}
A measure $\mu\in\cM^{\infty}(G)$ is called \emph{Stepanov $p$-almost periodic} if 
\[
\mu*f\in \s(G) \qquad \text{ for all } f\in\Cc(G) \,.
\]
\end{definition}

\begin{remark} \label{rem:stap=sap}
Proposition~\ref{prop:sap=stap} says that $\mu$ is Stepanov almost periodic if and only if it is strongly almost periodic. This happens because $\mu*f\in\Cu(G)$ for every $f\in\Cc(G)$ (since $\mu$ is translation bounded) and uniformly continuous Stepanov almost periodic functions are Bohr almost periodic, see Proposition~\ref{prop:sap=stap}.  
\end{remark}

\begin{remark}
We know that a function $f\in \Cu(G)$ is Bohr almost periodic (weakly almost periodic/ null weakly almost periodic) if and only if the measure $f\,\theta_G$ is strongly almost periodic (weakly almost periodic/ null weakly almost periodic), see \cite[Prop. 4.10.5]{TAO2}. Now, if $f\in\Cu(G)$, then $f\in\s(G)$ if and only if $f\,\theta_G$ is an Stepanov $p$-almost periodic measure because
\[
f\in\Cu(G)\cap\s(G) \iff f\in\text{SAP}(G) \iff f\,\theta_G\in\SAP(G) \,,
\]
see Proposition~\ref{prop:s_apcu} and Remark~\ref{rem:stap=sap}. 
\end{remark}

%

\begin{proposition} \label{prop:ac}
If $f\in\s(G)$, then  $f\, \theta_G \in \SAP(G)$.
\end{proposition}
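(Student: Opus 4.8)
The plan is to verify directly the two defining requirements of a Stepanov $p$-almost periodic measure: that $f\,\theta_G$ belongs to $\cM^{\infty}(G)$, i.e. is translation bounded, and that its convolution with every $g\in\Cc(G)$ is again Stepanov $p$-almost periodic.

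First I would establish translation boundedness. Since $f\in\s(G)$, Lemma~\ref{lem:2}(c) gives $\|f\|_{S^1}\leqslant\|f\|_{S^p}<\infty$, and by Lemma~\ref{lem:equiv_norms} this finiteness holds with respect to every compact set with non-empty interior. Hence, for any such $K$,
\[
\sup_{y\in G}\,|f\,\theta_G|(y+K)=\sup_{y\in G}\int_{y+K}|f(x)|\ \dd\theta_G(x)=\theta_G(K)\,\|f\|_{S_{K}^1}<\infty,
\]
so $f\,\theta_G\in\cM^{\infty}(G)$. (The density $f$ is locally integrable by H\"older's inequality applied on compact sets, so $f\,\theta_G$ is a well-defined measure in the first place.)

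The key observation for the second requirement is that convolving the measure $f\,\theta_G$ with a test function reduces to an ordinary convolution of functions. Indeed, for $g\in\Cc(G)$,
\[
(g*(f\,\theta_G))(x)=\int_G g(x-y)\,f(y)\ \dd\theta_G(y)=(g*f)(x)=(f*g)(x).
\]
Since $g\in\Cc(G)\subseteq L^1(G)$ and $f\in\s(G)$, Corollary~\ref{coro:conv_l1} yields $f*g\in\s(G)$. Thus $g*(f\,\theta_G)\in\s(G)$ for every $g\in\Cc(G)$, which is precisely the definition of $f\,\theta_G$ being Stepanov $p$-almost periodic, and therefore $f\,\theta_G\in\SAP(G)$.

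I do not expect a genuine obstacle here: the whole argument rests on identifying the measure--function convolution with the function--function convolution and on the already established Corollary~\ref{coro:conv_l1}. The only points requiring a line of care are the local integrability of $f$ (so that $f\,\theta_G$ is a measure) and the translation-boundedness estimate, both of which follow immediately from $\|f\|_{S^p}<\infty$ via Lemma~\ref{lem:2}.
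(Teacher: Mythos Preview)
Your proof is correct and follows essentially the same route as the paper's: both verify translation boundedness of $f\,\theta_G$ and then invoke Corollary~\ref{coro:conv_l1} to obtain $(f\,\theta_G)*g=f*g\in\s(G)$ for every $g\in\Cc(G)$. One small clarification: in the paper $\SAP(G)$ denotes the \emph{strongly} almost periodic measures, so your final passage from ``Stepanov $p$-almost periodic measure'' to $\SAP(G)$ is not by definition but by Proposition~\ref{prop:sap=stap} (equivalently Remark~\ref{rem:stap=sap}), which the paper also cites at that step.
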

\begin{proof} By Corollary~\ref{coro:conv_l1}, $\mu*\phi\in\s(G)$ for all $\phi\in\Cc(G)$. Consequently, $\mu$ is an Stepanov $p$-almost periodic measure. Also, it is easy to verify that $f\, \theta_G$ is translation bounded. Therefore, it is strongly almost periodic, see Remark~\ref{rem:stap=sap}. 
\end{proof}

%
%

\bigskip

Finally, let us give a proof of Theorem~\ref{thm:unique}.

\begin{proof}
First note that, for any $h\in\s(G)$, we have $c_{\chi}(h)=c_{\chi}(h\, \theta_G)$ for all $\chi\in\widehat{G}$. Together with the assumption, this gives
\begin{equation} \label{eq:acm}
c_{\chi}(f\, \theta_G) = c_{\chi}(g\, \theta_G) \qquad \text{ for all } \chi\in\widehat{G} \,.
\end{equation}
By Proposition~\ref{prop:ac}, the measures $f\, \theta_G$ and $g\, \theta_G$ are strongly almost periodic, which implies that $f\, \theta_G-g\, \theta_G$ is strongly almost periodic, too. On the other hand, it follows from Eq.~\eqref{eq:acm} and \cite[Thm. 8.1]{ARMA} that $f\, \theta_G-g\, \theta_G$ is null weakly almost periodic. But the only measure which is strongly and null weakly almost periodic is the null measure, see \cite[Prop. 5.7]{ARMA}. Thus, we have $f\, \theta_G=g\, \theta_G$, which is equivalent to $f=g$ almost everywhere.
\end{proof}

\subsection*{Acknowledgments}  
The author wishes to thank Nicolae Strungaru for interesting discussions. The work was supported by DFG via a Forschungsstipendium with grant 415818660, and the author is grateful for the support.

\end{document}